\newtheorem{theorem}{Theorem}[section]
\newtheorem{corollary}{Corollary}[section]
\newtheorem{lemma}{Lemma}[section]
\newtheorem{remark}{Remark}[section]
\newtheorem{example}{Example}[section]
\newtheorem{proposition}{Proposition}[section]
\def \a{\alpha }
\def \l{\lambda }
\begin{document}

\newcommand{\wta}{{\rm {wt} }  a }
\newcommand{\R}{\frak R}
\newcommand{\cV}{\mathcal V}
\newcommand{\cA}{\mathcal A}
\newcommand{\cL}{\mathcal L}
\newcommand{\J}{\mathcal H}
\newcommand{\G}{\mathcal G}
\newcommand{\wtb}{{\rm {wt} }  b }
\newcommand{\bea}{\begin{eqnarray}}
\newcommand{\eea}{\end{eqnarray}}
\newcommand{\be}{\begin {equation}}
\newcommand{\ee}{\end{equation}}
\newcommand{\g}{\frak g}
\newcommand{\hg}{\hat {\frak g} }
\newcommand{\hn}{\hat {\frak n} }
\newcommand{\h}{\frak h}
\newcommand{\U}{\mathcal U}
\newcommand{\hh}{\hat {\frak h} }
\newcommand{\n}{\frak n}
\newcommand{\Z}{\Bbb Z}
\newcommand{\N}{{\Bbb Z} _{> 0} }
\newcommand{\Zp} {\Z _ {\ge 0} }
\newcommand{\C}{\Bbb C}
\newcommand{\Q}{\Bbb Q}
\newcommand{\1}{\bf 1}
\newcommand{\la}{\langle}
\newcommand{\ra}{\rangle}
\newcommand{\NS}{\bf{ns} }

\newcommand{\hf}{\mbox{$\tfrac{1}{2}$}}
\newcommand{\thf}{\mbox{$\tfrac{3}{2}$}}

\newcommand{\W}{\mathcal{W}}
\newcommand{\non}{\nonumber}
\def \l {\lambda}
\baselineskip=14pt
\newenvironment{demo}[1]%
{\vskip-\lastskip\medskip
  \noindent
  {\em #1.}\enspace
  }%
{\qed\par\medskip
  }

\def \l {\lambda}
\def \a {\alpha}

\keywords{vertex superalgebras, affine Lie algebras, Clifford
algebras, Weyl algebra, lattice vertex operator algebras, critical level, Z-algebras}
\title[On principal realization of $A_1 ^{(1)}$--modules at the critical level]{On principal realization of modules for the  affine Lie algebra $A_1 ^{(1)}$ at the critical level}

  \subjclass[2000]{
Primary 17B69, Secondary 17B67, 17B68, 81R10}
\author{ Dra\v zen Adamovi\' c}

\date{}
%\curraddr
\address{Department of Mathematics, University of Zagreb,
Bijeni\v cka 30, 10 000 Zagreb, Croatia} \email {adamovic@math.hr}

\author{Naihuan Jing }
\address{Department of Mathematics, North Carolina State University, Raleigh, NC 27695, USA }
\email{jing@math.ncsu.edu}
\author{Kailash C. Misra}
\address{Department of Mathematics, North Carolina State University, Raleigh, NC 27695, USA}
\email{misra@math.ncsu.edu}

\markboth{} { }
\bibliographystyle{amsalpha}
\pagestyle{myheadings} \maketitle

\def \l {\lambda}
\def \a {\alpha}

\begin{abstract}
We present complete realization of irreducible $A_1 ^{(1)}$--modules at the critical level in the principal gradation. Our construction uses vertex algebraic techniques, the theory of twisted modules and representations of Lie conformal superalgebras. We also provide an alternative Z-algebra approach to this construction. All irreducible highest weight $A_1 ^{(1)}$--modules at the critical level are realized on the vector space $M_{\tfrac{1}{2} + \Z} (1) ^{\otimes 2}$ where $M_{\tfrac{1}{2} + \Z} (1)  $  is the polynomial ring ${\C}[\alpha(-1/2), \alpha(-3/2), ...]$. Explicit combinatorial bases for these modules are also given.

\end{abstract}

\maketitle

\section{Introduction}

Explicit realizations of affine Lie algebras and their representations have led to many important connections with other areas of mathematics and physics. The first explicit construction of the affine Lie algebra $A_1^{(1)}$ in terms of certain differential operators acting on a bosonic Fock space was given in \cite{LepWil78}. This was followed by a flurry of activities giving explicit realizations of affine Lie algebras and their integrable representations in both the homogeneous and principal gradation. Motivated by connections of partition identities to affine Lie algebras, Lepowsky and Wilson \cite{LepWil81, LepWil84} introduced certain nonassociative algebra called $Z$-algebra generated
by certain operators centralizing the action of a suitable Heisenberg subalgebra on the representation subspace. The $Z$-algebra has been used by several researchers to prove new and known combinatorial identities using affine Lie algebra representations in category $\mathcal{O}$.

The affine Lie algebra representations at the critical level are not in category $\mathcal{O}$, but seem to have
richer structures (cf. \cite{FG, AF}). In 1986 Wakimoto \cite{W-mod} gave a free field realization of the affine Lie algebra $A_1^{(1)}$ at an arbitrary level which has been extended to other affine Lie algebras by Feigin and Frenkel \cite{FeigFrenk, FeigFrenk90b} . These are called Wakimoto modules and they are reducible at integral levels. In \cite{DJM}, a realization of $A_1^{(1)}$ at the critical level is given using a Clifford type algebra. In \cite{A-2007} one of the authors introduced an infinite dimensional Lie superalgebra $\mathcal{A}$ and used vertex-algebraic techniques to construct a family of irreducible representations of $A_1^{(1)}$ at the critical level in the homogeneous picture. The principal realization of $A_1 ^{(1) }$--modules at the non-critical level are given in \cite{HJKOS} and \cite{HJM}. In these papers the authors study Wakimoto modules, and their relations with the deformed Virasoro algebra and $Z$-algebras.

In this paper, we use the vertex algebraic approach \cite{A-2007} to construct a family of irreducible $A_1 ^{(1)}$--modules at the critical level in the principal picture. Let $V_{-2} (sl_2)$ be the universal affine vertex algebra of level $-2$ associated to $\widehat{sl_2}$. It was proved \cite{A-2007} that $V_{-2} (sl_2)$ can be embedded into the vertex superalgebra ${\cV} \otimes F_{-1}$, where ${\cV}$ is the vertex superalgebra associated to the infinite--dimensional Lie superalgebra ${\cA}$ and
$F_{-1}$ is a lattice vertex superalgebra. For every irreducible, restricted ${\cA}$--module $U$ a family of irreducible $A_1 ^{(1)}$--modules $\mathcal{L}_s (U)$ were constructed in \cite{A-2007}.

Let $\Theta $ be the automorphism of $V_{-2} (sl_2)$ lifted from the $sl_2$--automorphism $\{ e \mapsto f, \ f \mapsto e, \ h \mapsto -h\}$.
We introduce the infinite--dimensional Lie superalgebra ${\cA} ^{tw}$  with basis
$$\mathcal{S}(n),  \ \mathcal{T}(n+1/2), \ \mathcal{G}(r), \ C , \qquad n \in {\Z}, \ r \in \tfrac{1}{2}{\Z} $$
and anti-commutation relations:
$$ \{ \mathcal{G}(r), \mathcal{G}(s) \} =  (-1) ^{2 r +1} ( 2 \delta_{r+s} ^{\Z}   \mathcal{S}(r+s) - \delta_{r+s} ^{ \tfrac{1}{2} + {\Z} }  (r-s) \mathcal{T}_{r+s} +
\tfrac{C}{3} \delta_{r+s} ^{\Z} ( r  ^ 2 - \tfrac{1}{4} ) \ \delta_{r+s,0} ), $$
$$ \mathcal{S}(n), \mathcal{T}(n+1/2), C \quad \mbox{in the center},$$
where $\delta_m ^X = 1$ if $ m \in X$, $\delta_m ^X = 0$ otherwise.
Then we show that the category of  twisted ${\cV}$--modules is analogous to the category of restricted ${\cA} ^{tw}$--modules with central charge $C=-3$.
One of our main results is the following.

%In this paper we shall present realization of a family of  $A_1 ^{(1)}$--modules at the critical level in the principal picture. In the vertex-algebraic language, this corresponds to a construction of $\Theta$--twisted $V_{-2} (sl_2)$--modules, where $\Theta $ is an automorphism of $V_{-2} (sl_2)$ lifted to $sl_2$--automorphism
%$$ e \mapsto f, \ f \mapsto e, \ h \mapsto -h. $$
%
%In order to construct irreducible $\Theta$--twisted modules, we introduce the infinite--dimensional Lie superalgebra ${\cA} ^{tw}$. The algebra
% ${\cA}^{tw}$ has a basis
%$$\mathcal{S}(n),  \ \mathcal{T}(n+1/2), \ \mathcal{G}(r), \ C , \qquad n \in {\Z}, \ r \in \tfrac{1}{2}{\Z} $$
%with anti-commutation relations:
%$$ \{ \mathcal{G}(r), \mathcal{G}(s) \} =  (-1) ^{2 r +1} ( 2 \delta_{r+s} ^{\Z}   \mathcal{S}(r+s) - \delta_{r+s} ^{ \tfrac{1}{2} + {\Z} }  (r-s) \mathcal{T}_{r+s} +
%\tfrac{C}{3} \delta_{r+s} ^{\Z} ( r  ^ 2 - \tfrac{1}{4} ) \ \delta_{r+s,0} ), $$
%$$ \mathcal{S}(n), \mathcal{T}(n+1/2), C \quad \mbox{in the center}, $$
%and $\delta_m ^S = 1$ if $ m \in S$, $\delta_m ^S = 0$ otherwise.
%
%
%Then the category of  twisted ${\cV}$--modules is analogous to the category of restricted ${\cA} ^{tw}$--modules with central charge $C=-3$.
%We prove the main result.
\begin{theorem}
Assume that $U ^{tw}$ is an irreducible, restricted ${\cA} ^{tw}$--module, and $F_{-1} ^{T}$ a twisted $F_{-1}$--module. Then $U ^{tw} \otimes F_{-1} ^{T}$ has the structure of an  irreducible $\widehat{sl_2}[\Theta]$--module at the critical level.
\end{theorem}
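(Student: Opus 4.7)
The plan is to reduce the theorem to the correspondence established just above the statement, combined with an irreducibility argument that parallels the construction in \cite{A-2007}. First I would use the correspondence between $\Theta$-twisted ${\cV}$-modules and restricted ${\cA}^{tw}$-modules with central charge $C = -3$ to promote the given irreducible ${\cA}^{tw}$-module $U^{tw}$ into an irreducible $\Theta$-twisted ${\cV}$-module. Taking the tensor product with the twisted $F_{-1}$-module $F_{-1}^{T}$ then yields a twisted module for the tensor vertex superalgebra ${\cV} \otimes F_{-1}$ with respect to the product of the two relevant automorphisms. Restriction along the embedding $V_{-2}(sl_2) \hookrightarrow {\cV} \otimes F_{-1}$, which I would verify is equivariant with the induced automorphism being precisely $\Theta$, then endows $U^{tw} \otimes F_{-1}^{T}$ with the structure of a $\Theta$-twisted $V_{-2}(sl_2)$-module at level $-2$. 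By the general theory of twisted affine vertex algebra modules this is exactly a $\widehat{sl_2}[\Theta]$-module at the critical level.

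Next I would tackle irreducibility, which is the heart of the matter. Let $W \subseteq U^{tw} \otimes F_{-1}^{T}$ be a nonzero $\widehat{sl_2}[\Theta]$-submodule. Following the strategy used for the untwisted tensor product $U \otimes F_{-1}$ in \cite{A-2007}, I would exploit the principal Heisenberg sitting inside $\widehat{sl_2}[\Theta]$: its action on $F_{-1}^{T}$ is that of the standard Heisenberg on a twisted Fock module, so $F_{-1}^{T}$ is irreducible as a module for the lattice piece, while its commutant on the tensor product encodes precisely the action of ${\cA}^{tw}$ on the $U^{tw}$ factor. Concretely, the commutant is generated by $\Theta$-twisted vertex operators attached to elements of ${\cV}$, and the twisted $\mathcal{V}$--$\mathcal{A}^{tw}$ correspondence identifies these operators with the generators $\mathcal{S}(n)$, $\mathcal{T}(n+1/2)$, $\mathcal{G}(r)$, $C$. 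Combining the Heisenberg-irreducibility of $F_{-1}^{T}$ with the assumed ${\cA}^{tw}$-irreducibility of $U^{tw}$ forces $W$ to exhaust the tensor product.

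The main obstacle is verifying, in the twisted setting, that the screening/Z-algebra type operators built from vertex operators on $U^{tw} \otimes F_{-1}^{T}$ really do reproduce the full action of ${\cA}^{tw}$ on the $U^{tw}$ factor, i.e.\ that the commutant construction is surjective onto the ${\cA}^{tw}$-action. This is the twisted analogue of the key technical lemma in \cite{A-2007} and requires careful tracking of the principal $\tfrac{1}{2}\Z$-grading, the twisted delta-function identities $\delta_{m}^{\Z}$ versus $\delta_{m}^{\tfrac{1}{2}+\Z}$ that appear in the defining relations of ${\cA}^{tw}$, and the sign conventions encoded in the prefactor $(-1)^{2r+1}$. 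Once this commutant computation is established, the Schur-type argument deducing irreducibility of $U^{tw}\otimes F_{-1}^{T}$ from irreducibility of each tensor factor under its own algebra is formal, and completes the proof.
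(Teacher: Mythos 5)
Your setup paragraph matches the paper's: identify $U^{tw}\otimes F_{-1}^T$ as a $\Theta$--twisted module for ${\cV}\otimes F_{-1}$ and restrict along the embedding $V_{-2}(sl_2)\hookrightarrow {\cV}\otimes F_{-1}$. For irreducibility, however, you take a genuinely different route from the one the paper uses for this theorem. The paper's proof (Theorem \ref{main-t1}) is a quantum-Galois/spanning argument: it invokes the twisted version of Dong--Mason's Corollary 4.2 to get $M^{tw}={\cV}\otimes F_{-1}\cdot w$ for any nonzero $w$, then observes by explicit computation with the twisted lattice vertex operators (\ref{for-vo1})--(\ref{for-vo2}) that $Y_{M^{tw}}({\bf 1}\otimes e^{s\beta},z)w\in W(0)\cdot w[[z^{1/2},z^{-1/2}]]$, so that only the degree-zero subalgebra $W(0)\cong V_{-2}(sl_2)\otimes M_T(0)$ is needed to span; since $M_T(0)$ is commutative, irreducibility over $V_{-2}(sl_2)$ follows. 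Your proposal instead goes through the principal Heisenberg and its commutant: pass to the vacuum space $\Omega(W)$ of a putative nonzero submodule, show the vacuum-space vertex operators reproduce the ${\cA}^{tw}$ (or rather $\widetilde{\cV}$) action on $U^{tw}$, and use irreducibility of each tensor factor together with the Lepowsky--Wilson/Gao--Li reconstruction $W=U(\hat{\h}_{\hf+\Z})\cdot\Omega(W)$. That is precisely the $Z$-algebra machinery the paper develops separately in Section 7 (Theorem \ref{ident-vacuum}, the subsequent OPE computation for $A^{\pm}(z)$, and Proposition \ref{ired-expl-2}). Your route is heavier but, as the paper itself shows, it pays off by producing combinatorial bases (Corollary \ref{bases-1}), whereas the Dong--Mason argument is shorter and gives irreducibility more directly. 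The one point you should flag in your write-up is the identification of the Heisenberg $h(n)=-2\beta(n)+2T(n)$: $T(n)$ for $n\in\hf+\Z$ is central in ${\cA}^{tw}$, hence acts by scalars on the irreducible $U^{tw}$, so the Heisenberg module on $U^{tw}\otimes F_{-1}^T$ is a scalar shift of the standard twisted Fock module; without noting this, the claim that ``the lattice factor carries the Heisenberg and $U^{tw}$ carries the commutant'' is slightly imprecise.
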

The proof of this result  will be presented in Section  \ref{sl2-section} (cf. Theorem \ref{main-t1}).

As in \cite{A-2007} and \cite{A-2013} we prove the irreducibility of a larger family of $\Theta$--twisted $V_{-2}(sl_2)$--modules. For every $\chi \in {\C}( z^{\tfrac{1}{2} })$ the  twisted $F_{-1}$-modules  $F^{T_i}(\chi)$, ($i=1,2$)  have the structure of restricted ${\cA}^{tw}$--module which is uniquely determined by certain twisted field.
%$$ G(z) = \partial_z \Phi(z) + \chi(z) \Phi(z) $$
%(for details see Sections \ref{certain} and \ref{ver-def}).
In Section \ref{constr-2} we construct a family of irreducible modules for ${\cA}^{tw}$.
In fact  we prove the following classification result.

\begin{theorem} \label{ired-uvod}
Assume that $p \in \tfrac{1}{2}{\Zp}$  and that
$$\chi(z) = \sum_{ k=-2p} ^{\infty} \chi_{ -\tfrac{k}{2} } z ^{ \tfrac{k}{2} -1}. $$
Then $F^{T_i}(\chi)$ is irreducible ${\cA}^{tw}$--module if and only if one of the following conditions hold:
\bea
\label{cond-i1} && p >0  \quad \mbox{and} \  \chi_p \ne 0, \\
&& p = 0 \quad \mbox{and} \ \chi_0 \in ({\C} \setminus \tfrac{1}{2}{\Z} ) \cup \{ \frac{1}{2} \}, \label{cond-i2} \\
&& p=0 \quad \mbox{and} \ \chi_0 - \frac{1}{2}  = \ell \in  \tfrac{1}{2}{\Zp}  \quad \mbox{and}   \ \det (A (\chi)) \ne 0  \label{cond-i3}  \eea
where
$$ A(\chi) = \left(
\begin{array}
[c]{ccccc}%
2 S(-1)  & 2 S(-2) & \cdots & \  & 2 S(-2 \ell)\\
\ell ^2 - (\ell-1) ^2 & 2 S(-1)  & 2 S(-2) & \cdots & 2 S(-2 \ell -1) \\
0 & \ell ^2 - (\ell -2) ^2 &  2 S(-1) & \cdots & 2 S(-2\ell-2) \\
\vdots & \ddots & \ddots & \ddots & \vdots\\
0 & \cdots & 0 & \ell ^2 - (\ell  -2 \ell +1) ^2 & 2 S(-1)
\end{array} \right)
$$
and
$$S(z) = \frac{1}{2} ( \chi^ {(1)}(z) ) ^2 + \partial_z \chi^ {(1)}(z) ) =\sum_{n \in \Z} S(n) z ^ {-n-2}  $$
(here $\chi^{(1)} $  is the integral part of $\chi$).
\end{theorem}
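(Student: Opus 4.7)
The plan is to reduce irreducibility of $F^{T_i}(\chi)$ to the non-existence of nontrivial singular vectors. First I would fix the module structure: $F^{T_i}(\chi)$ carries a natural $\tfrac{1}{2}\Zp$-grading with the cyclic vector $\1$ sitting in degree zero, is generated from $\1$ by the negative modes $\mathcal{G}(-r)$, $r>0$, and the central operators $\mathcal{S}(n)$, $\mathcal{T}(n+\tfrac{1}{2})$, $C$ act by the scalars extracted from $\chi$ (in particular $\mathcal{S}(n)$ acts by the Laurent coefficient $S(n)$ appearing in the statement). A standard PBW/cyclicity argument then gives that $F^{T_i}(\chi)$ is irreducible if and only if every homogeneous vector of positive grade killed by all $\mathcal{G}(r)$ with $r>0$ is zero. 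The remainder of the proof is a case analysis dictated by the shape of $\chi$.

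\textbf{The high-order pole case $p>0$.} In this regime the positive mode of $\mathcal{G}$ whose index matches the order of the pole acts on $\1$ by a scalar proportional to $\chi_p$. When $\chi_p\ne 0$, an induction on grade together with the anti-commutation relations of $\mathcal{A}^{tw}$ shows that this positive-mode action is nondegenerate on the whole Fock space, so no nonzero singular vector of positive grade can exist. Conversely, if $\chi_p=0$ (still with $p>0$), one exhibits an explicit singular vector built from $\1$ using the negative modes, giving reducibility.

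\textbf{The regular case $p=0$.} Here $\chi_0$ plays the role of a highest-weight parameter. For $\chi_0\notin\tfrac{1}{2}\Z$ a non-resonance argument — the eigenvalues of the grading operator never coincide with those of the operator on $\1$ coming from $\chi_0$ — shows that the singular-vector equations force the trivial solution at every positive grade. The boundary value $\chi_0=\tfrac{1}{2}$ is a trivial resonance where direct inspection shows $\1$ is the only singular vector. When $\chi_0-\tfrac{1}{2}=\ell\in\tfrac{1}{2}\Zp$ a genuine resonance opens at grade $\ell$: writing a generic candidate
$$ v_\ell \;=\; \sum_i c_i\,\mathcal{G}(-r^{(1)}_i)\cdots\mathcal{G}(-r^{(k_i)}_i)\1 $$
as a linear combination of PBW monomials of that grade and imposing $\mathcal{G}(s)v_\ell=0$ for $s>0$, the bracket of $\mathcal{A}^{tw}$ produces a homogeneous linear system in the $c_i$. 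Careful bookkeeping of the $(r-s)\mathcal{T}(r+s)$ and $2\mathcal{S}(r+s)$ contributions, together with the fact that the central scalars are computed from $\chi$ via $S(z)$, identifies the coefficient matrix of this system with $A(\chi)$: the diagonal and superdiagonal entries $2S(-j)$ arise from the $\mathcal{S}$-terms, while the subdiagonal entries $\ell^2-(\ell-k)^2=k(2\ell-k)$ arise from the $(r-s)$ factor once the resonance fixes $r+s$. A nontrivial singular vector exists iff $\det A(\chi)=0$, yielding the stated criterion.

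\textbf{Main obstacle.} The delicate step is the resonance case: one must choose a convenient PBW ordering at grade $\ell$, match the singular vector equations term-by-term with $A(\chi)$, and track the signs dictated by the integer versus half-integer parity appearing in the $\{\mathcal{G}(r),\mathcal{G}(s)\}$ relation. The first two cases follow by comparatively structural arguments once the singular-vector framework is set up, and the twisted modules $F^{T_1}(\chi)$ and $F^{T_2}(\chi)$ are handled in parallel since the singular-vector analysis depends on $\chi$ in the same way for both twists.
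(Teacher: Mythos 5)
Your framing as a singular-vector problem does not apply in the first case $p>0$. When the pole order of $\chi$ exceeds one, the positive modes $\mathcal{G}(r)$ for $0<r\le p$ contain degree-\emph{raising} Clifford components $\chi_p\Phi(r-p)$, so they do not annihilate $\1$; in particular your statement that the module is ``generated from $\1$ by the negative modes'' and that irreducibility reduces to the absence of positive-degree vectors killed by all $\mathcal{G}(r)$, $r>0$, is false here. The module is of Whittaker, not lowest-weight, type, and a proper submodule need not contain any such vector. The paper handles this case by a direct cyclicity argument (Proposition \ref{ired-1}): one shows that \emph{every} Clifford monomial $\Phi(-n_1)\cdots\Phi(-n_r)\1$ is cyclic, using the triangular estimate $G(p-n_1)\cdots G(p-n_r)\1 = C\,\Phi(-n_1)\cdots\Phi(-n_r)\1+\text{lower degree}$ with $C\ne 0$.

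In the resonance case the route you sketch to $A(\chi)$ does not match the algebra. The singular vector $P_\ell$ of Proposition \ref{important} is not homogeneous of Clifford degree $\ell$: it equals $\Phi(-\ell)\1$ plus lower-degree corrections down to a constant, because the terms $\chi_{-k/2}\Phi(\cdot+k/2)$ in $G(r)$ mix degrees. Moreover the span of PBW monomials in $G$-modes at grade $\ell$ is generically much larger than $2\ell$, so the coefficient matrix of ``the singular-vector equations at grade $\ell$'' would not be $A(\chi)$. The paper instead constructs the specific operator $\mathcal{P}_\ell=G(-\ell)+b_{-1/2}G(-\ell+1/2)+\cdots+b_{-2\ell+1/2}G(\ell-1/2)$ with $\{G(r),\mathcal{P}_\ell\}=0$ for $-\ell\le r\le\ell-\tfrac12$ (Lemma \ref{pmo-3}), proves in Theorem \ref{irr-equiv} that irreducibility is equivalent to $\mathcal{P}_\ell^2\1\ne 0$, equivalently to nonvanishing of the Gram determinant $\det\bigl(\{G(r),G(s)\}\bigr)_{r,s=-\ell,\dots,\ell-1/2}$, and then --- crucially --- passes through the vertex-algebra homomorphism $\Phi_1$ and the irreducibility-transfer result (Propositions \ref{prop-ulaganje}, \ref{prop-tw}) to a $\widetilde{\cV}$-module on which $\mathcal{T}$ acts trivially, so that the anti-commutator simplifies to $2\delta_{r+s}^{\Z}S(r+s)-(r^2-\tfrac14)\delta_{r+s,0}$ and the Gram matrix collapses to $A(\chi)$ up to a nonzero scalar (Corollary \ref{ired-expl}). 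Your proposal omits $\mathcal{P}_\ell$, the Gram-matrix reformulation, and the $\Phi_1$ reduction; without the last of these there is no visible way to eliminate the $\chi^{(2)}$- and $\mathcal{T}$-dependence and land on the specific matrix $A(\chi)$ stated in the theorem.
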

The proof of this theorem will be presented in Sections  \ref{constr-2} and \ref{korepondencija}. 

As in \cite{A-2013}, we prove that the condition (\ref{cond-i3}) can be replaced by the condition
$$S_{2 \ell} (-2 \chi ^ {(1)}  _{-1} , - 2 \chi  ^ {(1)}  _{-2} ,  \cdots ) \ne 0, $$  where
$S_{2 \ell}$ is a Schur polynomial.

As an application, all highest weight irreducible
$\widehat{sl}_2$-modules at the critical level are constructed in the principal gradation.
%for we present construction of all highest weight irreducible modules.
These modules are realized on the vector space
$$M_{\tfrac{1}{2} + \Z} (1) ^{\otimes 2} $$
where $M_{\tfrac{1}{2} + \Z} (1)  $  is a polynomial ring ${\C}[\alpha(-1/2), \alpha (-3/2), ...]$. It is worthwhile to mention  that this result is in agreement with the Kac-Kazhdan character formula for highest weight modules at the critical level (cf. \cite{KK}, \cite{efren}, \cite{W-mod}).

In fact, for $t \in \{-1 \} \cup {\C} \setminus {\Z}$, the Kac-Kazhdan
character formula for the irreducible $\widehat{sl_2}$--module $L(\mu)$, $\mu = -(2+ t) \Lambda_0 + t \Lambda_1$
says that
$$ \mbox{ch}_{L(\mu)} = e ^{\mu} \prod_{ n=1 } ^{\infty} (1 -e ^{ \alpha - n \delta} )^{-1} (1 - e ^{-\alpha - (n-1) \delta} ) ^{-1}. $$
By taking  $q =e ^{-\delta}$ and $q ^{{1/2}} = e ^{-\alpha}$ we get

$$ \mbox{ch}_{L(\mu)} = e ^{\mu} \prod_{ n=1 } ^{\infty} (1 -q ^{n-1/2} )^{-2},$$
which also indicates that $L(\mu)$ admits certain realization on $M_{\tfrac{1}{2} + \Z} (1) ^{\otimes 2} $.
%So Kac-Kazhdan character formula also indicates that $L(\mu)$ admits certain realization on $M_{\tfrac{1}{2} + \Z} (1) ^{\otimes 2} $.

We compare our realization with the construction presented in \cite{DJM}. We prove that at the critical level the vacuum space $\Omega(V_{-2}(sl_2) )$ of the universal affine vertex algebra $V_{-2}(sl_2)$ is isomorphic to a quotient $\widetilde{\cV}$ of the vertex algebra ${\cV}$. Moreover, the vacuum space of the simple affine vertex algebra $L_{-2}(sl_2)$ is a simple quotient $\overline{F}$ of ${\cV}$ introduced in \cite{A-2007}. In particular, our result shows that  the irreducible $A_1^{(1)}$--modules at the principal gradation have the form of a tensor product $U ^{tw} \otimes M_{ \tfrac{1}{2} + \Z}(1)$ where $U ^{tw}$ is an irreducible twisted  module for $\Omega(V_{-2}(sl_2))$ and $M_{ \tfrac{1}{2} + \Z} (1)$ is an irreducible twisted module for the Heisenberg vertex algebra $M_{\Z} (1)$.

Our approach can be used for describing bases of  modules at the critical level. For $\mu = (-2 -t ) \Lambda_0 + t \Lambda_1$ we also use $L(\mu)$ to denote the
$\Theta$--twisted $V_{-2}(sl_2)$--module with vertex operator map $Y^{tw}$.
Define $$ G(z) = E^{-} _{tw} (-\frac{h}{2},z) Y^{tw} (e(-1){\bf 1}, z) E^{+}_{tw} (- \frac{h}{2}, z) = \sum_{ n \in \tfrac{1}{2}{\Z} } G(n) z ^{-n-1}. $$
We prove the following result which gives the basis for $L(\mu)$ in the principal picture.
\begin{theorem} \label{th1.1-intr}  Assume that $\mu = (-2 -t ) \Lambda_0 + t \Lambda_1$.
\item[(1)] If  $t \in \{-1 \} \cup ({\C} \setminus {\Z} )$,  then the set of vectors
$$  G(-n_1) G(-n_2) \cdots G(-n_r) h(-m_1) \cdots h(-m_s) v_{\mu}  $$
such that
$ r, s \ge 0$, $n_i, m_j \in \tfrac{1}{2}{\Zp}$ and
$$ n_1 > n_2 > \cdots >n_r > 0 , \quad m_1 \ge m_2 \ge \cdots \ge  m_s > 0$$
is a basis of $L(\mu)$ in the principal picture.

\item[(2)] If $t \in {\Zp}$, then the set of vectors
$$  G(-n_1) G(-n_2) \cdots G(-n_r) h(-m_1) \cdots h(-m_s) v_{\mu}  $$
such that
$ r, s \ge 0$, $n_i, m_j \in \tfrac{1}{2}{\Zp}$, $n_i \ne \tfrac{t}{2} + \tfrac{1}{2}$,  and
$$ n_1 > n_2 > \cdots >n_r > 0 , \quad m_1 \ge m_2 \ge \cdots \ge  m_s > 0$$
is a basis of $L(\mu)$ in the principal picture.
\end{theorem}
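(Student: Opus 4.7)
The plan is to decompose $L(\mu)$ as an internal tensor product of a vacuum space and a twisted Heisenberg Fock space, and then exhibit a combinatorial basis on each factor. By construction
$$G(z)= E^{-}_{tw}(-\tfrac{h}{2},z)\,Y^{tw}(e(-1)\mathbf{1},z)\,E^{+}_{tw}(-\tfrac{h}{2},z),$$
so that the dressing factors $E^{\pm}_{tw}(-h/2,z)$ are exactly designed to force $[h(n),G(r)]=0$ for all $n$ and $r$. Consequently $L(\mu)\cong \Omega(L(\mu))\otimes M_{\tfrac{1}{2}+\Z}(1)$ as a bimodule for the twisted vacuum vertex algebra and the twisted Heisenberg algebra, where $\Omega(L(\mu))$ is the relative vacuum space. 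On $M_{\tfrac{1}{2}+\Z}(1)$ the ordered monomials $h(-m_1)\cdots h(-m_s)\mathbf{1}$ with $m_1\ge \cdots\ge m_s>0$ form the standard bosonic PBW basis, so the theorem reduces to exhibiting a strictly ordered $G$-monomial basis of $\Omega(L(\mu))$.

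For the spanning direction, recall that the anticommutator $\{G(r),G(s)\}$ in $\cA^{tw}$ already contains every mode $S(n)$ and $T(n+\tfrac{1}{2})$. Hence the operator subalgebra of $\mathrm{End}(\Omega(L(\mu)))$ generated by $\{G(-n):n\in\tfrac{1}{2}\Zp,\ n>0\}$ contains the entire negative part of $\cA^{tw}$ and therefore creates $\Omega(L(\mu))$ from the cyclic vector $v_\mu$. A super-PBW argument, swapping adjacent modes by the anticommutation relations and producing terms of strictly lower filtration in $\cA^{tw}$, then reduces every element to a linear combination of strictly ordered $G$-monomials $G(-n_1)\cdots G(-n_r)v_\mu$ with $n_1>\cdots>n_r>0$.

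For linear independence in case (1), $\Omega(L(\mu))$ is identified, via Theorem \ref{ired-uvod}, with one of the irreducible twisted Fock realizations $F^{T_i}(\chi)$ for a generic parameter. On this explicit free-field model the Gram matrix of the proposed basis against its natural dual is upper triangular with nonzero diagonal, hence nondegenerate; equivalently, the graded character of the spanning set equals $\prod_{n\ge 1}(1-q^{n-\tfrac{1}{2}})^{-2}$, which by the Kac-Kazhdan formula recalled above coincides with $\mbox{ch}\,L(\mu)$, forcing the spanning set to be a basis. For case (2) with $t\in\Zp$, the Verma module $M(\mu)$ carries a Malikov-Feigin-Fuchs singular vector proportional to $f^{t+1}v_\mu$; transporting it through the $\Theta$-twist and the dressing $E^{\pm}_{tw}(-h/2,z)$ one shows, after normal-ordering against the Heisenberg modes, that in the simple quotient $L(\mu)$ it becomes a nonzero scalar multiple of $G(-\tfrac{t+1}{2})v_\mu$. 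This kills precisely the ordered monomials containing the index $n_i=\tfrac{t}{2}+\tfrac{1}{2}$, and the Kac-Kazhdan character at integer $t$ matches the remaining combinatorial count so that no further relations appear.

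The main obstacle is the second half of case (2). The $\Theta$-twisted, Heisenberg-dressed image of $f^{t+1}v_\mu$ is a priori a complicated polynomial in many $G$- and $h$-modes, and the delicate step is to reduce it, using the anticommutation relations of $\cA^{tw}$ together with $[h(n),G(r)]=0$, to the single clean statement $G(-\tfrac{t+1}{2})v_\mu=0$, and then to verify via a Fock-space character comparison that after removing this one forbidden mode from the PBW list the remaining strictly ordered monomials are still linearly independent in $L(\mu)$.
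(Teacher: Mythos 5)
The paper does not take the singular-vector route you propose for case (2), and the gap you flag there is real. The paper's proof of both cases is carried out entirely inside the explicit free-field model: by Theorem \ref{e:twisted} and the subsequent theorem in Section \ref{sl2-section}, $L(\mu)$ is identified with $F^{T_i}(\lambda/z)\otimes F_{-1}^{T_j}$ in case (1), and with $\overline{F}^{T_i}(\lambda/z)\otimes F_{-1}^{T_j}$ in case (2), where $\lambda=-t/2$. On $F^{T_i}(\lambda/z)$ the $\cA^{tw}$-action is the diagonal formula $G(n+\tfrac12)=(\lambda-n-1)\Phi(n+\tfrac12)$, so for $t\in\Zp$ the single mode $G(-\tfrac{t+1}{2})$ is \emph{identically zero} as an operator (the coefficient vanishes at $n=\lambda-1$), not merely zero on $v_\mu$. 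Linear independence and spanning of the remaining ordered $G$-monomials then follow by the same triangular change-of-basis against the explicit $\Phi$-basis already established in Proposition \ref{ired-1} and Example \ref{ex-1}(2); no character comparison is needed in either case.

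Your case (1) is essentially on the right track: you correctly isolate the factorization $L(\mu)\cong\Omega(L(\mu))\otimes M_{\tfrac12+\Z}(1)$, the commutation $[h(n),G(r)]=0$, and the identification of the vacuum space with a twisted Fock model. The character-matching argument you add is a legitimate alternative to the paper's triangularity, though the paper's route is self-contained and does not appeal to Kac--Kazhdan.

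For case (2), however, the MFF singular-vector approach has concrete problems. First, the MFF formula is formulated in the homogeneous picture and must be transported through the $\Theta$-twist and the Heisenberg dressing; you acknowledge this reduction is not carried out and is ``a priori a complicated polynomial.'' Second, even if you establish $G(-\tfrac{t+1}{2})v_\mu=0$, that alone does not let you discard the index $\tfrac{t+1}{2}$ from the PBW list: the modes $G(-n)$ only anticommute up to scalar terms (the $\mathcal{S}$ and $\mathcal{T}$ modes act by scalars on this module), so moving $G(-\tfrac{t+1}{2})$ through a monomial produces shorter monomials, and an inductive argument is required. Third, the Kac--Kazhdan formula quoted in the paper is stated only for $t\in\{-1\}\cup(\C\setminus\Z)$; you cite it for integral $t$ without justification. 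All three issues disappear in the paper's approach, where $G(-\tfrac{t+1}{2})$ vanishes as an operator on the explicit realization and the surviving monomials are in visible bijection with the fermionic basis of $\overline{F}^{T_i}(\lambda/z)$.
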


Similar basis can be constructed for all irreducible modules in Theorem \ref{ired-uvod}.

 \section{Certain vertex algebras and their twisted modules}
\label{certain}
In this section
we review some results on lattice, fermionic and bosonic vertex algebras and their twisted modules
(cf. \cite{FLM}, \cite{FHL}, \cite{K}, \cite{LL}).

\subsection{ Clifford vertex superalgebras and their twisted modules}
The Clifford algebra $CL$ is a complex associative algebra generated by
$$ \Psi^{\pm}(r) , \  r  \in \hf + {\Z},$$ subject to the relations
\bea
&& \{\Psi^{\pm}(r) , \Psi^{\mp}(s) \} = \delta_{r+s,0}, \quad
 \{\Psi^{\pm}(r) , \Psi^{\pm}(s)\}=0,
\nonumber
\eea
where $r, s \in \frac{1}{2} + {\Z}$. The Clifford
algebra $CL ^{tw} $ is a complex associative algebra generated by
$$ \Phi (r) , \  r  \in \hf  {\Z},$$ under the relations
\bea
&& \{\Phi(r) , \Phi (s) \} =  - (-1) ^{ 2 r} \delta_{r+s,0}; \quad \nonumber
\eea
where $r, s \in \frac{1}{2} {\Z}$.

Let $F$ be the irreducible $CL$--module generated by
 the
cyclic vector ${\1}$ such that
$$ \Psi^{\pm} (r) {\1} = 0 \quad
\mbox{for} \ \ r > 0 .$$
A basis of $F$ is given by
$$ \Psi^{+ }({-n_1-{\hf}})  \cdots \Psi^{+}({-n_r-{\hf}})  \Psi^{-}({-k_1-{\hf}})  \cdots \Psi^{-}({-k_s-{\hf}})
 {\1} $$
where $n_i, k_i \in {\Zp}$,  $n_1 >n_2 >\cdots >n_r  $, $k_1 >k_2
>\cdots
>k_s $.

Define the following   fields on $F$
$$   \Psi^{+}(z) = \sum_{ n \in   {\Z}
 } \Psi^{+}(n+{\hf} )  z ^{-n- 1}, \quad  \Psi^{-} (z) = \sum_{ n \in {\Z}
 } \Psi^{-} (n+{\hf} )  z ^{-n-1}.$$

 The fields $\Psi^{+}(z)$ and $\Psi^{-}(z)$ give arise to the unique simple vertex superalgebra
 structure on $F$ \cite{K, FB}. As a vertex operator superalgebra, $F$ has the involution $\Theta_{F}$ induced
 from the automorphism of the Clifford algebra $CL$:
\begin{equation*} \Theta  \Psi^{\pm}(n+ {\hf}) =  \Psi^{\mp}(n+ {\hf}).
\end{equation*}
 Let $\phi^{(i)}$ be the following $\pm 1$-eigenvectors of $\Theta_F$:
 $$ \phi^{(1)} = \frac{1}{\sqrt{2}} (\Psi^+(-1/2) + \Psi^- (-1/2) ) {\bf 1}, \quad \phi^{(2)}  = \frac{1}{\sqrt{2}} (\Psi^+(-1/2) - \Psi^- (-1/2) ) {\bf 1}. $$

 Let $F ^{T_i}$ $(i=1, 2)$ be the irreducible $CL^{tw}$-module spanned by (as a vector space)
 \cite{AM-sigma, Xu}
 $$\bigwedge( \Phi (r) \ \vert \ r < 0) $$
 with the module structure given by
\begin{align*}
\Phi^{(1)}  (z) &= Y_{ F ^{T_i} } ( \phi ^{(1)}, z) =\sum_{ n \in {\Z} } \Phi (n+ 1/2) z ^{-n-1},\\
\Phi^{(2)}  (z) &= Y_{ F ^{T_i} } ( \phi ^{(2)}, z) =\sum_{ n \in \tfrac{1}{2} +{\Z} } \Phi (n+ 1/2) z ^{-n-1}
\end{align*}
such that $\Phi(z)=\Phi^{(1)}(z)+\Phi^{(2)}(z)$
and $\Phi(0) \equiv (-1) ^{i} \frac{1}{\sqrt{-2}} \ \mbox{Id} $. Then  $F^{T_1}, \ F^{T_2}$
are the only non-isomorphic irreducible $\Theta_{F}$--twisted modules of $F$.
 %has the structure of  irreducible module for the Clifford algebra $CL^{tw}$,
% realized on the vector space
% $$\bigwedge( \Phi (r) \ \vert \ r < 0) $$
% such that $\Phi(0) \equiv (-1) ^{i} \frac{1}{\sqrt{-2}} \ \mbox{Id} $.
%(cf. \cite{AM-sigma},  \cite{Xu}).
%Clearly,
%%
%$$ \Phi(z) = \Phi^{(1)}  (z) +   \Phi^{(2)} (z), $$
%where
%$$  \Phi^{(1)}  (z) = Y_{ F ^{T_i} } ( \phi ^{(1)}, z) =\sum_{ n \in {\Z} } \Phi (n+ 1/2) z ^{-n-1}, $$  $$\Phi^{(2)}  (z) = Y_{ F ^{T_i} } ( \phi ^{(2)}, z) =\sum_{ n \in \tfrac{1}{2} +{\Z} } \Phi (n+ 1/2) z ^{-n-1}.$$
\subsection{Commutative vertex algebra $M(0)$ and its twisted modules}

Let $M(0) = {\C}[{\gamma}^{+}(n), {\gamma}^{-}(n) \  \vert \ n <0
]$ be the commutative vertex algebra generated by the fields
$${\gamma} ^{\pm} (z) = \sum_{ n  < 0} {\gamma}^{\pm} (n) z ^{-n-1}. $$
 Let $\chi^{\pm}(z) = \sum_{n \in {\Z} }
\chi^{\pm}_{n} z^{-n-1} \in {\C} ((z))$, and let $M(0, \chi^{+},
\chi^{-})$ be the $1$--dimensional irreducible $M(0)$--module
such that every element ${\gamma}^{\pm}(n)$ acts on
$M(0, \chi^{+}, \chi^{-})$ as multiplication by $\chi^{\pm}_n \in
{\C}$.

The vertex algebra $M(0)$ has an automorphism $\Theta_{M(0)}$ of order two given by
$$ \Theta_{M(0)} (\gamma ^{\pm} (n) ) = \gamma ^{\mp} (n). $$

Let $$ \gamma^{(1)} = \frac{\gamma ^{+} + \gamma ^{-}}{2}, \ \gamma^{(2)} = \frac{\gamma ^{+} - \gamma ^{-}}{2}. $$
$\Theta_{M(0)}$--twisted $M(0)$--modules are restricted modules for the commutative Lie algebra spanned by
$$\gamma  (n), \quad n \in {\hf}{\Z}. $$

For every $\chi(z) \in {\C} (( z^{1/2}))$, let $M^{tw}(0, \chi)$ be $1$--dimensional, irreducible, $\Theta_{M(0)}$--twisted $M(0)$--module with the property that $\gamma(n)$ acts as multiplication by $\chi(n)$. We have
$$ Y_{ M^{tw}(0, \chi) } ( \gamma^{(1)}(-1) {\bf 1} , z) = \chi^{(1)} (z), \ Y_{ M^{tw}(0, \chi) } ( \gamma^{(2)}(-1) {\bf 1} , z) = \chi^{(2)} (z), $$
where
$$ \chi(z) = \chi^{(1)}  (z) + \chi^{(2)} (z), $$
$$  \chi^{(1)}  (z) = \sum_{ n \in {\Z} } \chi (n+ 1/2) z ^{-n-1}, \quad   \chi^{(2)}  (z) = \sum_{ n \in \tfrac{1}{2} +{\Z} } \chi (n+ 1/2) z ^{-n-1}.$$

\subsection{The vertex superalgebra $F_{-1}$ and its twisted modules }

Let $F_{-1}$ be the lattice vertex superalgebra associated to the lattice $L= {\Z} \beta$
with the inner product $\langle \beta, \beta \rangle = -1$.
Let $\h = {\C} \otimes _{\Z} L$, the Heisenberg and twisted Heisenberg algebras
$${\hh}_{\Z} = {\h} \otimes {\C}[t,t ^{-1}] \oplus {\C} C , \qquad
{\hh}_{ \tfrac{1}{2} + \Z} = {\h} \otimes t ^{1/2} {\C}[t,t ^{-1}] \oplus {\C} C $$ are defined as usual
with the bracket: for $\alpha, \gamma\in \h$
\begin{equation}
[\alpha(m), \gamma(n)]=m\delta_{m, -n}\langle \alpha, \gamma\rangle C, \quad m, n\in \mathbb Z\ \mathrm{or}
\ \mathbb Z+1/2.
\end{equation}

Let $M_{\Z} (1) = S (\h \otimes t ^{-1} {\C}[t^{-1}]) $ and  $M_{\tfrac{1}{2} + \Z} (1) = S (\h \otimes t ^{-1/2} {\C}[t^{-1}]) $
be the level one irreducible modules for ${\hh}_{\Z} $  and ${\hh}_{ \tfrac{1}{2} + \Z}$ respectively.

The vertex superalgebra $F_{-1}$ can be realized as  $M_{\Z} (1) \otimes {\C}[L]$, where ${\C}[L]$ is the group algebra with basis $\{ e ^{\alpha}, \ \alpha \in L\}$.  There is a linear map
$$ Y : F_{-1}  \rightarrow \mbox{End} (F_{-1} ) [[z,z ^{-1}]], \quad v \mapsto Y(v,z) = \sum_{n \in {\Z} } v_n z ^{-n-1} $$
such that $(F_{-1}, Y, {\bf 1}) $ is a vertex superalgebra.
For $\alpha \in L$ we have
$$ Y(e ^{\alpha}, z ) = E ^{-} (-\alpha, z) E ^{+} (-\alpha, z) e ^{\alpha} z ^{\alpha(0)} $$
where
$$ E^{\pm}  (-\alpha, z) = \exp \left( \sum_{k=1} ^{\infty} \frac{\alpha(\pm k)}{\mp k}  z ^{\mp k} \right). $$

As a vertex algebra, $F_{-1}$ is generated by $e ^{\beta}$ and $e ^{-\beta}$. We have the following $\Z$--gradation:
$$ F_{-1} = \bigoplus_{i \in {\Z}} F_{-1} ^{(i)}, \quad  F_{-1} ^{(i)}= M_{\Z} (1). e^{i \beta}. $$

The vertex algebra $F_{-1}$ has an automorphism $\Theta_{F_{-1}}$ lifted from the automorphism $\beta \mapsto -\beta$ of the lattice $L$. Then
$$ \Theta_{F_{-1}} (e ^{\pm \beta} ) = e ^{\mp \beta}, \Theta_{F_{-1}} ( \beta(-1) ) = - ( \beta(-1) ). $$
There are two non-isomorphic irreducible $\Theta_{F_{-1}}$-twisted $F_{-1}$-modules $F_{-1} ^{T_i}$, $i=1,2$ which are realized on the irreducible ${\hh}_{{\hf} + {\Z} }$--module
$M_{\hf + {\Z}} (1)$ (for details see \cite{FLM}).
The module structure is defined by twisted vertex operators:
$$ Y_{tw} : F_{-1}  \rightarrow \mbox{End} (F_{-1} ^{T_i} ) [[z^{1/2},z ^{-1/2}]], \quad v \mapsto Y_{tw} (v,z) = \sum_{n \in \tfrac{1}{2}  {\Z} } v_n z ^{-n-1}. $$
Here, for $\alpha \in L$ we have
\bea \label{for-vo1}  && Y_{tw}(e ^{\alpha}, z ) = -(2 \sqrt{-1})  ^{- \langle \alpha, \alpha \rangle  }E_{tw} ^{-} (-\alpha, z) E_{tw} ^{+} (-\alpha, z)  (-1) ^i  \eea
where
\bea \label{for-vo2} &&E_{tw}^{\pm}  (-\alpha, z) = \exp \left( \sum_{k \in {\Zp} + \tfrac{1}{2} }  \frac{\alpha(\pm k)}{\mp k}  z ^{\mp k} \right). \eea

\begin{remark} By the boson--fermion correspondence, the fermionic vertex algebra $F$ can be realized as a lattice vertex algebra $V_{\Z}$ and the twisted modules
$F^{T_i}$ can be also realized on the irreducible ${\hh}_{{\hf} + {\Z} }$--module
$M_{\hf + {\Z}} (1)$.
\end{remark}

\section{ The vertex superalgebra ${\cV}$ and its twisted  modules}
\label{ver-def}

Recall first the construction of the vertex superalgebra ${\cV}$ from \cite{A-2007}.
Let ${\mathcal F}$ be the vertex superalgebra generated by the
fields $\Psi^{\pm} (z)$ and ${\gamma}^{\pm}(z)$. Therefore
${\mathcal F} = F \otimes M(0)$. As in \cite{A-2007}, denote by
${\cV}$ the vertex subalgebra of the  vertex superalgebra
${\mathcal F}$ generated by the following vectors
\bea
 \tau^{\pm} &=& (\Psi^{\pm} (-\tfrac{3}{2}) + {\gamma}^{\pm} (-1)
 \Psi^{\pm}(-\hf)) {\1}, \label{def-tau} \\
 j &=& \frac{ {\gamma}^{+} (-1) - {\gamma}^{-}(-1)}{2} {\1}, \label{def-j} \\
 \nu &=& \frac{ 2 {\gamma}^{+} (-1) {\gamma}^{-}(-1) + {\gamma}^{+}(-2) + {\gamma}^{-}(-2)}{4}
  {\1} .
\label{def-nu}  \eea
The  vertex superalgebra structure on ${\cV}$     is given by
the following fields \bea && G^{\pm} (z) = Y(\tau ^{\pm} ,z)
= \sum _{n \in {\Z} } G ^{\pm} (n+ {\hf} ) z ^{-n-2}, \label{polje-g} \\
 && S (z) = Y(\nu,z)
= \sum _{n \in {\Z} } S({n })  z ^{-n-2}, \label{polje-s} \\
&& T (z) = Y(j,z) = \sum _{n \in {\Z} } T(n )  z ^{-n-1}.
\label{polje-t} \eea

It turns out that the components of the field operators generate a Lie superalgebra.
By abusing notation, let ${\cA}$ be the Lie superalgebra with basis $ S(n), T(n),
{G} ^{\pm} (r), C$, $n\in {\Z}$, $ r\in {\hf} + {\Z}$ with the
(anti)commutation relations given by \bea && [S(m),S(n)] = [S(m),
T(n) ] = [S(m), G^{\pm} (r) ] =0,
\non \\
&& [T(m), T(n)]= [T(m), G^{\pm} (r)] =0, \nonumber \\ && [C,
S(m)]= [C, T(n)] = [C,
G^{\pm}(r)]=0, \nonumber \\
   && \{ G ^{+} (r),
G ^{-} (s) \} = 2 S({r+s}) +
(r-s) T({r+s}) + \tfrac{C}{3} ( r  ^ 2 - \tfrac{1}{4} ) \delta_{r+s,0}, \non \\
 && \{ G ^{+} (r), G ^{+} (s) \}= \{ G ^{-} (r), G ^{-} (s) \} = 0 \non
\eea for all $n \in {\Z}$, $r,s \in {\hf} + {\Z}$.

Using the commutator formulae for vertex superalgebras, we have
that the components of fields (\ref{polje-g})-(\ref{polje-t})
satisfy the (anti)commutation relation for the Lie superalgebra
${\cA}$ with the central element $C$ acting as multiplication
by $C=-3$.
Recall from \cite{A-2007} that ${\cV}$ admits a natural $\Z$--gradation:
\bea &&  {\cV} = \bigoplus_{i \in {\Z} } {\cV} ^{(i)}. \label{grad-1} \eea

Let $M_T(0)$ be the vertex subalgebra of ${\cV}$ generated by the field $T(z)$ and let $\widetilde{I} = U(\cA). M_T(0)$ be the ideal in $\mathcal{V}$ generated by $M_T(0)$.  Set
$$ \widetilde{\mathcal{V} } = \mathcal{V} / \widetilde{I}. $$

As in  \cite{A-2007}, let ${\mathcal V}^{com}$ be the subalgebra of ${\cV}$  generated by $T(z)$ and $S(z)$ and let  $I ^{com}= U(\mathcal A)$ be the ideal in  ${\mathcal V}$ generated by ${\mathcal V} ^{com}$.
Let $\overline{F} = {\cV} / I ^{com}$.  Then $\overline{F}$ is a simple vertex superalgebra, which can  be realized as a subalgebra of Clifford vertex superalgebra $F$ generated by $\partial_z \Psi^+ (z)$ and $\partial_z \Psi^- (z)$.

Since ideals $\widetilde{I}$ and  $I ^{com} $ are  $\Z$--graded, the gradation (\ref{grad-1}) induces  the $\Z$--gradation on $\widetilde{\mathcal{V} }$  and $\overline{F}$:
$$   \widetilde{\cV} = \bigoplus_{i \in {\Z} } \widetilde{\cV} ^{(i)}, \quad  \overline{F} = \bigoplus_{i \in {\Z} } \overline{F} ^{(i)}.  $$

We have  the generators of $\widetilde{\cV}$
\bea \label{tilde-gen} &&\widetilde{\tau}^{\pm} := \tau^{\pm} + \widetilde{I},   \ \widetilde{\nu}= j + \widetilde{\nu}. \eea
Let $${\Theta}_1  = {\Theta}_F \otimes {\Theta}_{M(0)}. $$
It is clear that $\Theta_1$ is an automorphism of $\mathcal{F}$ which is $\cV$--invariant. Let $$ \Theta_{\cV}= {\Theta}_1 \vert \ {\cV}.$$
Clearly,
$$ {\Theta}_{\cV} (\tau ^{\pm}) = \tau ^{\mp}, \ {\Theta}_{\cV} (j) = -j, \ {\Theta}_{\cV} (\nu) = \nu.$$
Therefore $ {\Theta}_{\cV} $ is an  automorphism of order two of ${\cV}$. Since the ideal  $\widetilde I  $ is ${\Theta}_{\cV}$--invariant, it induces the automorphism of   $\widetilde { \cV} $ which we shall denote by 
${\Theta}_{\widetilde {\cV} }$.

Let

$$ \tau ^{ (1)} = \frac{1}{\sqrt{2}} ( \tau ^+ + \tau ^-) ,  \quad  \tau ^{ (2)} = \frac{1}{\sqrt{2}} ( \tau ^+ - \tau ^-). $$

Recall the infinite dimensional Lie suparalgebra ${\cA}^{tw}$ with basis
$$\mathcal{S}(n),  \ \mathcal{T}(n+1/2), \ \mathcal{G}(r), \ C , \qquad n \in {\Z}, \ r \in \tfrac{1}{2}{\Z} $$
and anti-commutation relations:
$$ \{ \mathcal{G}(r), \mathcal{G}(s) \} =  (-1) ^{2 r +1} ( 2 \delta_{r+s} ^{\Z}   \mathcal{S}(r+s) - \delta_{r+s} ^{ \tfrac{1}{2} + {\Z} }  (r-s) \mathcal{T}_{r+s} +
\tfrac{C}{3} \delta_{r+s} ^{\Z} ( r  ^ 2 - \tfrac{1}{4} ) \ \delta_{r+s,0} ), $$
$$ \mathcal{S}(n), \mathcal{T}(n+1/2), C \quad \mbox{in the center}, $$
with $\delta_m ^S = 1$ if $ m \in S$, $\delta_m ^S = 0$ otherwise.

\begin{remark}
Lie superalgebra ${\cA}^{tw}$ is similar to the twisted $N=2$ superconformal Lie superalgebra. The only
difference is that ${\cA} ^{tw}$ contains a large center.
\end{remark}

Assume that $(M^{tw}, Y^{tw})$ is any $\Theta_{\cV}$--twisted ${\cV}$--module. Define
\bea
G ^{(1)} (z) &=& Y^{tw} ( \tau^{(1)}, z  ) = \sum_{n \in {\Z} } G (n+ \frac{1}{2}) z ^{-n-2}, \nonumber \\
G ^{(2)} (z) &=& Y^{tw} ( \tau^{(2)}, z ) = \sum_{n \in \tfrac{1}{2} + {\Z} } G (n+ \frac{1}{2}) z ^{-n-2}, \nonumber \\
G(z)  & = & G^{(1)} (z) + G ^{(2)} (z) = \sqrt{2} Y^{tw} (\tau ^{+}, z), \nonumber \\
S(z) & = & Y^{tw} (\nu,z) = \sum_{n \in {\Z} } S(n) z ^{-n-2}, \nonumber \\
T(z) & = & Y^{tw} (j ,z) = \sum_{n \in \tfrac{1}{2} +{\Z} } T(n) z ^{-n-1}. \nonumber
\eea

By using commutator formulas for twisted modules one proves that the components of the fields $G(z), S(z), T(z)$ satisfy the commutation relations for the Lie superalgebra ${\cA} ^{tw}$
(see also  \cite{B}, \cite{FLM}, \cite{Li-twisted}, \cite{K}).
So we have:
\begin{theorem}
The category of $\Theta_{\cV}$--twisted ${\cV}$--modules coincides with the category of restricted modules for Lie superalgebra ${\cA} ^{tw}$. A ${\Theta}_{\cV}$--twisted module $U^{tw}$ is irreducible if and only if it is irreducible as an ${\cA} ^{tw}$--module.
\end{theorem}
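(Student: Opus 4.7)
The forward direction is essentially established in the paragraph immediately preceding the statement. Given a $\Theta_{\cV}$-twisted $\cV$-module $(M^{tw}, Y^{tw})$, the twisted commutator formula for vertex superalgebras (cf.\ \cite{Li-twisted}, \cite{FLM}, \cite{K}) applied to the generators $\tau^{(1)}, \tau^{(2)}, j, \nu$ of $\cV$ produces precisely the anti-commutation relations defining $\cA^{tw}$ with $C$ acting as $-3$. What one must spell out here is that the parity assignments in the bracket $\{\mathcal{G}(r),\mathcal{G}(s)\}$ correctly split according to whether $r+s\in\Z$ or $r+s\in\tfrac12+\Z$, which comes directly from the mode expansions of $G^{(1)}(z)$ (integer-shifted, paired with $\nu$) and $G^{(2)}(z)$ (half-integer-shifted, paired with $j$) under the $\Theta_{\cV}$-twist. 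Restrictedness of $M^{tw}$ as an $\cA^{tw}$-module is equivalent to the lower truncation property $v_n w=0$ for $n\gg 0$ in the definition of a twisted module.

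For the reverse direction, suppose $U^{tw}$ is a restricted $\cA^{tw}$-module with $C=-3$. I plan to construct a $\Theta_{\cV}$-twisted action $Y^{tw}$ of $\cV$ on $U^{tw}$ by first defining the twisted fields on the generating set
\begin{equation*}
Y^{tw}(\tau^{(1)},z)=\sum_{n\in\Z}\mathcal{G}(n+\tfrac12)z^{-n-2},\quad Y^{tw}(\tau^{(2)},z)=\sum_{n\in\tfrac12+\Z}\mathcal{G}(n+\tfrac12)z^{-n-2},
\end{equation*}
\begin{equation*}
Y^{tw}(j,z)=\sum_{n\in\tfrac12+\Z}\mathcal{T}(n)z^{-n-1},\quad Y^{tw}(\nu,z)=\sum_{n\in\Z}\mathcal{S}(n)z^{-n-2},
\end{equation*}
and then extending by Li's general twisted reconstruction procedure \cite{Li-twisted}. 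The $\cA^{tw}$ anti-commutation relations translate exactly into the twisted commutator formulas required for these fields to be mutually local in the twisted sense, and hence (because $\cV$ is strongly generated by $\tau^{\pm}, j,\nu$, equivalently by $\tau^{(1)},\tau^{(2)},j,\nu$) they extend uniquely by normally ordered products to a twisted vertex operator map on all of $\cV$ satisfying the twisted Jacobi identity. Restrictedness of $U^{tw}$ guarantees the lower truncation axiom for the extended $Y^{tw}$. The main technical point, and what I expect to be the principal obstacle, is checking that the normally ordered products of the generating twisted fields are well-defined on $U^{tw}$ and that the relations in $\cV$ among composite vectors (coming from singular vectors of the ideal that produces $\cV$ inside $\mathcal{F}$) are automatically respected; this is where the concrete form of the defining expressions \eqref{def-tau}--\eqref{def-nu} is used, together with the fact that the central charge is forced to be $-3$.

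For the equivalence of irreducibility, observe that under the correspondence above the $\cV$-submodules of $U^{tw}$ coincide with the $\cA^{tw}$-submodules. Indeed, every element of $Y^{tw}(v,z)$ for $v\in\cV$ is, by construction via normal ordering, a polynomial in the modes $\mathcal{S}(n),\mathcal{T}(n+\tfrac12),\mathcal{G}(r)$, so any $\cA^{tw}$-invariant subspace is automatically stable under $Y^{tw}(v,z)$ for all $v$. Conversely, the modes of the twisted fields attached to $\tau^{(i)},j,\nu$ are exactly the generators of $\cA^{tw}$, so any $\cV$-submodule is $\cA^{tw}$-stable. Irreducibility therefore transfers in both directions, completing the proof.
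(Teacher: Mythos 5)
Your proof is essentially correct and follows the same outline as the paper, which for this theorem offers only the twisted commutator-formula computation for the generating fields plus citations to \cite{B}, \cite{FLM}, \cite{Li-twisted}, \cite{K}, and no further detail; your forward direction is exactly this, and your reverse direction via Li's local-system reconstruction is the standard way to fill in what the references are invoked for.

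One terminological slip worth fixing: $\cV$ is defined as a vertex \emph{subalgebra} of $\mathcal{F}=F\otimes M(0)$ generated by $\tau^\pm, j,\nu$, not a quotient, so the phrase ``singular vectors of the ideal that produces $\cV$ inside $\mathcal{F}$'' doesn't parse. The concern you are actually pointing at is genuine but lives elsewhere: for the reverse direction to hold as stated, $\cV$ must be the \emph{universal} enveloping vertex superalgebra at central charge $-3$ of the Lie conformal superalgebra $\mathcal{R}$ whose current algebra is $\cA$; if instead $\cV$ were a proper quotient of that universal object, not every restricted $\cA^{tw}$-module with $C=-3$ would extend to a $\Theta_{\cV}$-twisted $\cV$-module. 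That universality is precisely what is established (in the untwisted picture) in \cite{A-2007}, and your normal-ordered reconstruction argument is valid once one cites it; you should replace the ``ideal inside $\mathcal{F}$'' language with an appeal to that result, after which the irreducibility equivalence goes through exactly as you wrote it.
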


 Now   we shall consider a family of
 twisted ${\cV}$--modules.
For ${\chi} \in {\C}((z^{1/2}))$ we set
$F^{T_i} ({\chi}) :=F^{T_i}  \otimes M ^{tw} (0,\chi)$.
Then $F ^{T_i} (\chi)$ is a $\Theta_{\cV}$--twisted  module for the vertex
superalgebra ${\cV}$.
Since $ M^{tw} (0,\chi)$ is one-dimensional, we have that
as  a vector space

\bea  \label{identification-prva} F ^{T_i} (\chi)\cong
F ^{T_i} \cong { \bigwedge} \left(\Phi (-n) \ \vert \ n \in \tfrac{1}{2} {\Z}, n > 0 \right).
\eea
We have:

 \bea
 \tau^{(1)} &=& \left(  \phi^{(1)} (-3/2) + \phi^{(1)} (-1/2) \gamma^{(1)} (-1) +   \phi^{ (2)} (-1/2) \gamma ^{(2)} (-1) \right) {\bf 1}, \nonumber \\
 \tau^{(2)} &=& \left(  \phi^{(2)} (-3/2) + \phi^{(1)} (-1/2) \gamma^{(2)} (-1) +   \phi^{ (2)} (-1/2) \gamma ^{(1)} (-1) \right) {\bf 1}. \nonumber
 \eea
Then the field
\bea  G(z) = \partial_{z} \Phi(z) + \chi(z) \Phi(z) \label{twisted-field-3} \eea
uniquely defines the (twisted) module structure on $F ^{T_i} (\chi)$.

\begin{remark}
Note that if $\chi^{(2)} (z) = 0$, then $F ^{T_i} (\chi)$ is a  $\Theta_{\widetilde \cV}$--twisted $\widetilde{\cV}$--module.
\end{remark}

\section{Construction of irreducible ${\cA}^{tw}$--modules}
\label{constr-2}
In this section we shall describe some irreducible  ${\cA}^{tw}$--modules. Construction of irreducible  ${\cA}$--modules was given in    \cite{A-2007} and \cite{A-2013}. Our results will provide a twisted generalization of these modules. We start with the following simple, but important example.

 \begin{example} \label{ex-1}
 Assume that $\chi(z) = \frac{\lambda}{z}$.  Then the action of ${\cA}^{tw}$ is given by
 $$ G(n+1/2) = (\lambda -n-1 ) \Phi(n+1/2) \quad (n \in \tfrac{1}{2} {\Z} ). $$
 \item[(1)]If   ${\lambda} \in ({\C} \setminus \tfrac{1}{2} {\Z}) \cup \{\frac{1}{2}\} $,
 using the fact that  $F ^{T_i}$ is irreducible as $CL^{tw}$--module, we get that
 $F ^{T_i} ( \frac{\lambda}{z})$ is an irreducible  ${\cA}^{tw}$--module and therefore irreducible $\Theta_{\cV}$--twisted $\cV$--module.
 \item[(2)] If ${\lambda} \in \tfrac{1}{2} {\Z} $, $ \lambda < \frac{1}{2}$, one easily sees that $F^{T_i} ( \lambda /z)$ has a proper irreducible submodule:
$$ \overline{F} ^{T_i} (\lambda / z) = \mbox{Ker}_{ \overline{F} ^{T_i}  }  \Phi (1/2- \lambda )   \cong \bigwedge
\left(   \Phi(n)  \ \vert \ n \in \tfrac{1}{2} {\Z}, n < 0,  n \ne  \lambda -1/2 \right). $$

\item[(3)] If   ${\lambda} \in \tfrac{1}{2} {\Z} $, $ \lambda > \frac{1}{2}$, then $F^{T_i} ( \lambda /z)$ has the irreducible submodule $$U({\cA}^{tw}). \Phi(-\lambda + \frac{1}{2}) {\bf 1}. $$
 \end{example}

The following proposition is a twisted version of Proposition 5.1 in \cite{A-2007}.
\begin{proposition} \label{ired-1} Assume that   $p \in \frac{1}{2} {\Zp}$ and that $$\chi (z) = \sum_{k = -2p} ^{\infty} \chi_{-\tfrac{k}{2} } z^{ \tfrac{k}{2} -1}$$ satisfies the following conditions
\bea
&& \chi_p \ne 0, \nonumber \\
&& \chi_0 \in ({\C} \setminus \tfrac{1}{2} {\Z} ) \cup \{ \frac{1}{2} \} \quad \mbox{if} \ p = 0 . \nonumber
\eea
    Then $F^{T_i} (\chi)$ is an irreducible ${\cA}^{tw}$--module with basis:
    $$ G(p-n_1) \cdots G(p-n_r) {\bf 1}, \quad r \ge 0, \ n_i \in \tfrac{1}{2}{\Z}, \  \ n_1 > \cdots > n_r > 0. $$
\end{proposition}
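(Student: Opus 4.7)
The plan is to adapt the proof of Proposition 5.1 in \cite{A-2007} to the twisted setting. First I would expand the modes of $G(z)=\partial_z\Phi(z)+\chi(z)\Phi(z)$ using the given form of $\chi$: for $r\in\tfrac{1}{2}\Z$,
\begin{equation*}
G(r)=-\Bigl(r+\tfrac{1}{2}\Bigr)\Phi(r)+\sum_{k\ge -2p}\chi_{-k/2}\,\Phi\!\Bigl(r+\tfrac{k}{2}\Bigr).
\end{equation*}
For $p>0$ the lowest $\Phi$-mode appearing is $\Phi(r-p)$ with nonzero coefficient $\chi_p$; for $p=0$ the lowest is $\Phi(r)$ with coefficient $\chi_0-r-\tfrac{1}{2}$, and the hypothesis $\chi_0\in(\C\setminus\tfrac{1}{2}\Z)\cup\{\tfrac{1}{2}\}$ is precisely what guarantees this coefficient is nonzero for every $r\in\tfrac{1}{2}\Z\setminus\{0\}$. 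The forbidden mode $r=0$ never occurs in the proposed basis, since $p-n_i=-n_i<0$ when $p=0$.

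Next, using the standard fermionic basis $\Phi(-n_1)\cdots\Phi(-n_r){\1}$ of $F^{T_i}$ and the lexicographic order on tuples $(n_1,\dots,n_r)$, I would prove by induction on $r$ that
\begin{equation*}
G(p-n_1)\cdots G(p-n_r){\1}=c_{n_1,\dots,n_r}\,\Phi(-n_1)\cdots\Phi(-n_r){\1}+(\text{lex-strictly-smaller monomials}),
\end{equation*}
with $c_{n_1,\dots,n_r}\ne 0$ (equal to $\chi_p^{\,r}$ for $p>0$ and to $\prod_i(\chi_0+n_i-\tfrac{1}{2})$ for $p=0$). The inductive step applies $G(p-n_1)$ to the already-computed $G(p-n_2)\cdots G(p-n_r){\1}$: the lowest mode $\chi_p\Phi(-n_1)$ (or its $p=0$ analog) produces the new leading term, while the remaining modes $\Phi(m)$ with $m>-n_1$, when pushed through the existing string via the Clifford anti-commutations $\{\Phi(s),\Phi(t)\}=-(-1)^{2s}\delta_{s+t,0}$, yield only monomials with strictly lex-smaller index tuples. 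Matching the leading terms against the fermionic basis of $F^{T_i}\cong F^{T_i}(\chi)$ gives both linear independence and spanning, proving the basis claim.

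For irreducibility, take any nonzero submodule $W$ and $0\ne v\in W$. Let $\Phi(-m_1)\cdots\Phi(-m_s){\1}$ be the lex-maximal monomial of $v$ and act by the annihilation mode $G(p+m_s)$: its lowest $\Phi$-component, a nonzero scalar times $\Phi(m_s)$ by the mode expansion above, contracts with $\Phi(-m_s)$ to produce a nonzero scalar, while the remaining modes contribute only lex-smaller monomials. Iterating $s$ times yields a nonzero scalar multiple of ${\1}\in W$, and the basis result then forces $W=F^{T_i}(\chi)$. The hard part is the bookkeeping in the triangularity step: one must confirm that every Clifford anti-commutation of an ``extra'' mode of $G(p-n_1)$ with the existing operators yields a strictly lex-smaller tuple, and that the leading coefficient $c_{n_1,\dots,n_r}$ never vanishes. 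This is automatic for $p>0$; the $p=0$ case is precisely where the refined form of the $\chi_0$ hypothesis enters, ruling out the single bad mode $r=0$, which by design is absent from the basis.
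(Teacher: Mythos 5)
Your proposal is correct and follows essentially the same strategy as the paper: expand the modes $G(r)$, show $\mathbf{1}$ is cyclic via a triangular/leading-term induction, and then obtain irreducibility by applying annihilation modes to the dominant monomial of an arbitrary nonzero vector and reducing to $\mathbf{1}$. The only (cosmetic) differences are that you use a lexicographic order on index tuples where the paper's proof inducts on the degree $\sum n_i$, and that you treat $p>0$ directly rather than deferring to the analogous untwisted result in \cite{A-2007}.
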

\begin{proof}
We shall prove this proposition in the case $p=0$. The case $p > 0$ uses similar arguments and it is analogous to the aforementioned result in \cite{A-2007}.
Then the action of ${\cA}^{tw}$ is given by
 $$ G(n+1/2) = (\chi_0 -n-1 ) \Phi(n+1/2) +  \sum_{k = 1} ^{\infty} \chi_{-\tfrac{k}{2} } \Phi(n+1/2+ \tfrac{k}{2} ).  $$
 First we shall prove that the vacuum vector is a cyclic vector under
 the $U(\cA^{tw})$--action, i.e.,
 \bea
\label{ciklic} U({\cA}^{tw}). {\1} = F^{T_i}.
 \eea
 Take an arbitrary basis element of $F^{T_i}$:
\bea &&v= \Phi (-n_1) \cdots \Phi(-n_r)
 {\1}   \in F ^{T_i} ,  \label{baza}\eea
 where $n_i \in \tfrac{1}{2} {\Zp}$,  $n_1 >n_2 >\cdots
>n_r \ge \frac{1}{2} $.

 By using action of ${\cA}^{tw}$ we get:
 $$ G(-n_1) G(-n_2)  \cdots G(- n_r) {\1} = C \Phi(-n_1) \Phi(-n_2) \cdots \Phi (-n_r) +  \cdots
\quad (C \ne 0),$$
 where "$\cdots $" denotes the sum of monomials of the form
$$ \Phi(-j_1) \cdots \Phi(-j_l) $$
such that $$ j_1 + \cdots + j_l < n_1 + \cdots + n_r. $$
Using induction on degree of monomials we get
 that $v \in U({\cA}^{tw}). {\1}$.

 For the proof of irreducibility it is enough to see that arbitrary basis element $v$  from (\ref{baza}) is cyclic in $F ^{T_i}$.
 This follows from (\ref{ciklic}) and the fact that
 $$ G(n_1) \cdots G(n_r) v = \nu {\bf 1} \quad (\nu \ne 0),$$
which completes the proof.
\end{proof}

The case $p=0$ and  $\chi_0 \in \tfrac{1}{2} \Z\backslash\{\frac12\}$ is very interesting and as in \cite{A-2013} it will be related to certain polynomials.

\begin{proposition}
Assume that $\chi_0  \in \tfrac{1}{2} {\Z}$, $\chi_0  \le  0$. Then $F^{T_i}(\chi)$ is reducible.
\end{proposition}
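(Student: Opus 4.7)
The plan is to exhibit an $\mathcal{A}^{tw}$-invariant proper nonzero subspace of $F^{T_i}(\chi)$ by constructing a corrected annihilation operator. Set $\ell = -\chi_0 \in \tfrac{1}{2}\Zp$, fix the nonzero vector $v_0 := \Phi(-\ell-\tfrac{1}{2})\mathbf{1}$, and consider the ansatz
\[
\Psi \;=\; \Phi(\ell+\tfrac{1}{2}) \;+\; \sum_{s\in\tfrac{1}{2}\Z,\; s > \ell+1/2} d_s\,\Phi(s) \;\in\; \mathrm{End}(F^{T_i}),
\]
with scalar coefficients $d_s$ to be chosen so that $\{\Psi, G(r)\} = 0$ for every $r \in \tfrac{1}{2}\Z$.

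Writing $G(r) = \sum_{u\ge r} a_{r,u}\Phi(u)$ with $a_{r,r}= \chi_0 - r - \tfrac{1}{2}$ and $a_{r, r+k/2} = \chi_{-k/2}$ for $k \ge 1$, and using $\{\Phi(s),\Phi(u)\} = -(-1)^{2s}\delta_{s+u,0}$, the anti-commutator $\{\Psi,G(r)\}$ becomes the scalar
\[
(-1)^{2\ell}\, a_{r,-(\ell+1/2)} \;-\; \sum_{s > \ell+1/2} d_s\,(-1)^{2s}\, a_{r,-s}.
\]
For $r \ge -(\ell+\tfrac{1}{2})$ this vanishes automatically (all the relevant $a_{r,u}$ are zero). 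For $r < -(\ell+\tfrac{1}{2})$ the diagonal coefficient (at $s=-r$) is $(-1)^{-2r}(\chi_0 - r - \tfrac{1}{2}) = -(-1)^{-2r}(\ell + r + \tfrac{1}{2})$, which is nonzero precisely because $r \ne -(\ell+\tfrac{1}{2})$. The resulting linear system is therefore upper triangular in $(d_s)$ and uniquely solvable by descending recursion on $s$ (one finds for instance $d_{\ell+1} = 2\chi_{-1/2}$, $d_{\ell+3/2} = 2\chi_{-1/2}^2 - \chi_{-1}$, and so on).

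Since $\Psi$ is a formal sum of pure annihilation modes $\Phi(s)$ with $s \ge \ell+\tfrac{1}{2} > 0$, on every vector of finite $\Phi$-degree only finitely many correction terms contribute, so $\Psi$ is a well-defined operator on $F^{T_i}$. Clearly $\Psi\,\mathbf{1} = 0$, so $\mathbf{1} \in \mbox{Ker}\,\Psi$. On the other hand $\Phi(s)v_0 = 0$ for $s > \ell+\tfrac{1}{2}$ (the anti-commutator $\{\Phi(s),\Phi(-(\ell+1/2))\}$ vanishes unless $s = \ell+\tfrac{1}{2}$), while $\Phi(\ell+\tfrac{1}{2})v_0 = (-1)^{2\ell}\mathbf{1}$, giving $\Psi v_0 = (-1)^{2\ell}\mathbf{1} \ne 0$. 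Hence $v_0 \notin \mbox{Ker}\,\Psi$.

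Finally, $\{\Psi, G(r)\} = 0$ implies $\Psi G(r) = -G(r)\Psi$, so $\mbox{Ker}\,\Psi$ is $G$-invariant; since $S(n)$ and $T(n+\tfrac{1}{2})$ act as scalars on $F^{T_i}(\chi)$, the subspace $\mbox{Ker}\,\Psi$ is a nonzero proper $\mathcal{A}^{tw}$-submodule, so $F^{T_i}(\chi)$ is reducible. The only technical obstacle is the solvability of the triangular system for $d_s$, which hinges on the diagonal entries $\chi_0 - r - \tfrac{1}{2}$ being nonzero for $r \ne -(\ell+\tfrac{1}{2})$; this is exactly the obstruction appearing when $\chi_0 \in \tfrac{1}{2}\Z_{\le 0}$, generalizing Example~\ref{ex-1}(2) (the case $\chi(z) = \chi_0/z$, where $\{\Phi(\ell+\tfrac{1}{2}), G(r)\}$ already vanishes identically and no correction is needed).
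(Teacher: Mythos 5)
Your proof is correct, and it takes a constructive route where the paper simply cites the arguments of \cite{A-2013}, Proposition~4.3. The paper's own proof identifies the vector $\Phi(\chi_0 - 1/2)\mathbf 1 = \Phi(-\ell - 1/2)\mathbf 1$ and asserts directly that it cannot lie in $U(\mathcal A^{tw})\cdot\mathbf 1$; you exhibit the same witness vector $v_0$, but justify its unreachability by building an explicit separating operator $\Psi = \Phi(\ell+\tfrac12) + \sum_{s>\ell+1/2} d_s \Phi(s)$ that anticommutes with every $G(r)$, so that $\mbox{Ker}\,\Psi$ is an $\mathcal A^{tw}$-submodule containing $\mathbf 1$ but not $v_0$. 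The recursion for the coefficients $d_s$ is sound: for $r < -(\ell+\tfrac12)$ the diagonal coefficient $\chi_0 - r - \tfrac12 = -\ell - r - \tfrac12$ is strictly positive, so the system is upper-triangular with nonzero diagonal, and the solvability of the recursion plus the vanishing of that coefficient exactly at $r = -(\ell+\tfrac12)$ is the same phenomenon the paper exploits. Your $\Psi$ is the natural annihilation-side analogue of the corrected creation operator $\mathcal P_{\ell}$ used in Lemma~\ref{pmo-3} and Theorem~\ref{irr-equiv} for the opposite regime $\chi_0 \ge 1$ (and it correctly specializes to $\Phi(1/2 - \lambda)$ in Example~\ref{ex-1}(2) when all $\chi_{-k/2}$ with $k \ge 1$ vanish), so it fits cleanly with the structure of the section. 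The main thing your approach buys is self-containment: the reducibility is witnessed by a concrete operator on $F^{T_i}(\chi)$ rather than by reference to an argument in another paper.
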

\begin{proof}
By using similar arguments to that of  Proposition 4.3 in \cite{A-2013} we see that
$$ \Phi( \chi_0 -1/2). {\bf 1} \notin U({\cA}^{tw})\cdot {\bf 1}, $$
which proves the proposition.
\end{proof}

It remains to consider the case $\chi_0 = \ell +\frac{1}{2}$ for certain  $\ell \in \tfrac{1}{2}{\Z_{\ge 0} } $.
In this case  we have the action of ${\cA}^{tw}$ given by:

\bea &&  G(n+1/2) = (\ell-n-\tfrac{1}{2} ) \Phi(n+1/2) +  \sum_{k = 1} ^{\infty} \chi_{-\tfrac{k}{2} } \Phi(n+1/2+ \tfrac{k}{2} ).  \label{djelovanje} \eea

The following result shows that when  $F^{T_i}(\chi)$ is reducible, its maximal submodule is irreducible.
\begin{proposition} \label{important}
Assume that $0 \ne U \subsetneqq F^{T_i}(\chi)$ is any propoer submodule of  $F^{T_i}(\chi)$. Then there exist uniquely determined $a_{-k/2} \in {\C}$, $k =1,\dots, 2 \ell$ such that
$$ P  _{\ell} = (\Phi(-\ell) + \sum_{k=1}^{2 \ell-1} a_{-k/2} \Phi(-\ell + k/2)  + a_{-\ell} ) {\bf 1} \in U.$$
Moreover, $U^{T_i}(\chi) = U({\cA}^{tw}). P  _{\ell} $ is irreducible and the quotient $F^{T_i} (\chi) / U^{T_i}(\chi)$ is irreducible.
\end{proposition}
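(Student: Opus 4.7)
The plan is to carry out four steps—cyclicity, uniqueness, existence, and the two irreducibility assertions—using repeatedly the action of positive-mode $G(r)$'s on the finite-dimensional subspace $W_{\le \ell} := \mathrm{span}\{\mathbf{1}, \Phi(-1/2)\mathbf{1}, \ldots, \Phi(-\ell)\mathbf{1}\}$. First, the argument of Proposition~\ref{ired-1} still applies: the principal coefficient of $G(-m)\mathbf{1}$ is $\ell + m \neq 0$ for every $m > 0$, so $\mathbf{1}$ is cyclic in $F^{T_i}(\chi)$ and hence $\mathbf{1} \notin U$ for any proper submodule. A direct Clifford computation shows that for $r > 0$ and $m > 0$, $G(r)\mathbf{1} = 0$ and $G(r)\Phi(-m)\mathbf{1}$ is always a scalar multiple of $\mathbf{1}$: it equals $-(\ell - m)(-1)^{2m}\mathbf{1}$ when $r = m$, equals $-(-1)^{2m}\chi_{-(m-r)}\mathbf{1}$ when $0 < r < m$, and vanishes when $r > m$. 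For uniqueness, take $v = \sum_{j=0}^{2\ell} b_j \Phi(-j/2)\mathbf{1} \in U \cap W_{\le \ell}$ nonzero with $j^* = \max\{j : b_j \neq 0\}$; applying $G(j^*/2)$ and observing that the $\chi$-corrections only pick up $b_j$ with $j > j^*$ (all zero) yields $-(\ell - j^*/2)(-1)^{j^*}b_{j^*}\mathbf{1} \in U$, which forces $j^* = 2\ell$ since $\mathbf{1} \notin U$. Normalizing $b_{2\ell} = 1$ then determines the remaining coefficients $a_{-k/2}$ uniquely.

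For existence, given $v \in U \setminus \{0\}$ I reduce to a nonzero element of $U \cap W_{\le \ell}$ in two stages. Let $n^*(v)$ be the largest leading index among monomials of $v$. When $n^* > \ell$, the operator $G(n^*)$ strictly lowers the height: its principal part $(\ell - n^*)\Phi(n^*)$ contracts the leading $\Phi(-n^*)$ of each monomial where it appears (distinct reduced monomials prevent cancellation), while its $\chi$-corrections $\Phi(n^* + k/2)$ cannot match any index $\le n^*$ and so annihilate everything else. Iterating brings $v$ into $U \cap F^{T_i}_{[\le \ell]}$, the span of monomials with all indices at most $\ell$. A second stage uses $G(r)$ with $0 < r < \ell$ to contract $\Phi$-factors from multi-$\Phi$ monomials one at a time; by inducting on the pair ($\Phi$-count, leading index) and choosing $r$ so that its principal part contracts the leading factor of the top monomial in a lex ordering, each step strictly decreases complexity while the nonzero scalar $(\ell - r)$ guarantees the contracted term survives. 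The output lies in $U \cap W_{\le \ell}$ and, combined with uniqueness, gives the desired $P_\ell$.

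For irreducibility of $U^{T_i}(\chi) = U(\cA^{tw}) \cdot P_\ell$: any nonzero submodule $V \subseteq U^{T_i}(\chi)$ is itself a proper submodule of $F^{T_i}(\chi)$, so the existence step applied to $V$ yields $P_\ell \in V$ and hence $V = U^{T_i}(\chi)$. For irreducibility of the quotient $F^{T_i}(\chi)/U^{T_i}(\chi)$: any nonzero $\bar v$ in the quotient lifts to $v \notin U^{T_i}(\chi)$, and rerunning the same two-stage reduction inside the submodule generated by $v$ yields an element of $W_{\le \ell}$ whose image modulo $U^{T_i}(\chi) \cap W_{\le \ell} = \C P_\ell$ is nonzero (because $v$ is not in $U^{T_i}(\chi)$), so $\bar{\mathbf{1}}$ can be recovered by cyclicity. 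The main obstacle is the second-stage reduction of the existence step and its analogue in the quotient irreducibility argument: descending from multi-$\Phi$ monomials inside $F^{T_i}_{[\le \ell]}$ down to $W_{\le \ell}$ requires careful combinatorial bookkeeping so that successive $\Phi$-count-reducing operations do not inadvertently annihilate the element being reduced, and the interplay between the principal term $(\ell - r)\Phi(r)$ and the $\chi$-corrections must be tracked with care.
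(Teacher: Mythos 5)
Your plan — cyclicity of $\mathbf{1}$, producing an element $P_\ell$ of $U$ by acting with the positive modes $G(r)$, uniqueness, and then the two irreducibility claims — is the same as the paper's, and several of the pieces are correct: the Clifford computation of $G(r)\Phi(-m)\mathbf{1}$, the cyclicity of $\mathbf{1}$, and the $j^*$-argument for uniqueness (the paper instead solves the triangular linear system obtained from $G(r)P_\ell=0$ for $0<r<\ell$ and $(G(0)+\ell\varepsilon_i)P_\ell=0$, which is an equivalent route to the same conclusion).

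The real problem, which you flag yourself, is the ``second-stage'' reduction. As written it does not work. After Stage~1 the obstruction is exactly at the index $\ell$: the principal part of $G(\ell)$ is $(\ell-\ell)\Phi(\ell)=0$, so $G(\ell)$ cannot contract $\Phi(-\ell)$; and if you instead contract a lower factor $\Phi(-m)$, $0<m<\ell$, by applying $G(m)$, then the correction term $\chi_{-(\ell-m)}\Phi(\ell)$ in $G(m)$ contracts $\Phi(-\ell)$ as well. So the assertion that ``the nonzero scalar $(\ell-r)$ guarantees the contracted term survives'' is not correct: the principal and correction contributions can mix and cancel, and choosing ``$r$ so that its principal part contracts the leading factor of the top monomial'' is impossible when that leading factor is $\Phi(-\ell)$. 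You acknowledge this by saying the interplay ``must be tracked with care,'' but no tracking is offered, so the existence of $P_\ell\in U$ is not established.

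A cleaner way to finish the existence step, and what the paper's one-line ``Using this and the action of operators $G(n)$ one sees that\ldots'' is implicitly relying on, is a minimality argument rather than a constructive descent. Pick $v\in U$ nonzero minimizing the top Clifford degree $d(v):=\max\{\deg\Phi_I : c_I\neq 0\}$. Since $\mathbf{1}\notin U$ one has $d=d(v)>0$, and since every $G(r)$, $r>0$, strictly decreases $d$, minimality forces $G(r)v=0$ for all $r>0$. Looking at the top-degree component $v_d$ of $v$, this gives $(\ell-r)\Phi(r)v_d=0$ for all $r>0$, which forces every monomial of $v_d$ to involve only the index $\ell$; hence $d=\ell$ and $v_d=c\,\Phi(-\ell)\mathbf{1}$ with $c\neq 0$. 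Normalizing $c=1$ and examining the lower-degree components of the identity $G(r)v=0$, degree by degree, forces the degree-$(\ell-j/2)$ component of $v$ to be a multiple of $\Phi(-\ell+j/2)\mathbf{1}$; this gives $v\in W_{\le\ell}$ with leading term $\Phi(-\ell)\mathbf{1}$, i.e.\ the desired $P_\ell$, and it completely avoids the problematic Stage~2. The same minimality trick is what closes your quotient-irreducibility argument: for a proper submodule $U\supsetneq U^{T_i}(\chi)$, take $v\in U\setminus U^{T_i}(\chi)$ with $d(v)$ minimal, note that $G(r)v\in U^{T_i}(\chi)$ for $r>0$ by minimality, and compare top-degree components, using that the top-degree parts of $U^{T_i}(\chi)$ all involve $\Phi(-\ell)$.
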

\begin{proof}
As in the proof of Proposition \ref{ired-1} we see that
$$ F ^{T_i} (\chi)  = U({\cA}^{tw}). {\bf 1}, $$
which implies that ${\bf 1}$ is a cyclic vector.
Using this and  the  action of operators $G(n)$ one sees that there is a vector $P_{\ell} \in U$ of the form
$$P  _{\ell} = (\Phi(-\ell) + \sum_{k=1}^{2 \ell-1} a_{-k/2} \Phi(-\ell + k/2)  + a_{-\ell} ) {\bf 1}.$$
 Since $U$ is a proper submodule we have
 $$ G(r) P  _{\ell}  = 0 \quad (r > 0), \qquad  ( G(0) + \ell \varepsilon_i) P_{\ell} = 0. $$
 This leads to the following system of linear equations:
 \bea
 (\ell -r) a_{-\ell + r} + \chi_{-1/2} a_{-\ell + r + 1/2} + \cdots + \chi_{-\ell + r+ \tfrac{1}{2}} a_{   - 1/2}   &=& - \chi_{-\ell + r} \quad ( 0 < r < \ell), \nonumber  \\
 2 \ell \varepsilon_i  a_{-\ell} + \chi_{-1/2} a_{-\ell  + 1/2} + \cdots + \chi_{-\ell + 1/2}  a_{-1/2} & = & -\chi_{-\ell} .  \label{sustav}
 \eea
  This linear system (\ref{sustav}) has a unique solution. Irreducibility follows from the fact that the vector $P_{\ell}$ is unique.
\end{proof}

The proof of the following lemma is easy.
\begin{lemma} \label{pmo-3}
  There exist unique $b_{-k/2} \in {\C}$, $k=1, \dots, 4 \ell -1 $ such that
  $$\mathcal{P}_{\ell} := ( G(-\ell) + b_{-1/2} G(-\ell + 1/2) + \cdots+ b_{-\ell}  G(0) + \cdots b_{-2 \ell + 1/2}  G(\ell -1/2) ).  $$
  $$ \{ G(r), \mathcal{P}_{\ell} \} = 0, \quad \mbox{for} \ r=-\ell, \dots, \ell -1 /2.$$
  Moreover,
$$ P_{\ell} = \frac{1}{2 \ell} \mathcal{P}_{\ell}  {\bf 1}$$ and
$$\mathcal{P}_{\ell} ^2 .{\bf 1} = \widetilde{S}_{2 \ell} (\chi_{-1/2}, \cdots, \chi_{- 2 \ell} ) {\bf 1} $$ for certain nonzero polynomial
$\widetilde{S}_{2 \ell}  \in {\C}[x_1, \dots, x_{4 \ell}]$.
\end{lemma}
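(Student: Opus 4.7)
The plan is to verify the three claims in sequence, using the fact that every anti-commutator $\{G(r), G(s)\}$ lies in the center of $\cA^{tw}$ and hence acts on $F^{T_i}(\chi)$ by a scalar $B_{r,s} \in \C$ computable from the formula (\ref{djelovanje}) and the Clifford relation $\{\Phi(r), \Phi(s)\} = -(-1)^{2r}\delta_{r+s, 0}$. I shall split the unknowns $b_{-k/2}$ into the \emph{creation} coefficients $b_{-1/2}, \dots, b_{-\ell}$ (multiplying $G(s)$ with $s \le 0$) and the \emph{annihilation} coefficients $b_{-\ell - 1/2}, \dots, b_{-2\ell + 1/2}$ (multiplying $G(s)$ with $s > 0$).

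The creation coefficients will be fixed by the requirement $\mathcal{P}_\ell {\bf 1} = 2\ell P_\ell$. Since $G(s){\bf 1} = 0$ for $s > 0$, only creation modes contribute; using $G(r) = (\ell-r)\Phi(r) + \sum_{k \ge 1}\chi_{-k/2}\Phi(r+k/2)$, matching coefficients on the basis $\{\Phi(-\ell + k/2){\bf 1} : k = 0, \dots, 2\ell - 1\} \cup \{{\bf 1}\}$ yields a triangular $2\ell \times 2\ell$ system with nonzero diagonal (the $k$th row having diagonal entry $2\ell - k/2$, and the constant-term row having diagonal $\ell \varepsilon_i$), uniquely solving for $b_{-1/2}, \dots, b_{-\ell}$. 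The anti-commutation conditions $\{G(r), \mathcal{P}_\ell\} = 0$ for $r \ge 0$ are then automatic: for $r > 0$, $\{G(r), \mathcal{P}_\ell\}{\bf 1} = G(r) \cdot 2\ell P_\ell + 0 = 0$ by Proposition \ref{important}; for $r = 0$, the contributions $G(0) \cdot 2\ell P_\ell = -2\ell^2 \varepsilon_i P_\ell$ and $\mathcal{P}_\ell G(0){\bf 1} = 2\ell^2 \varepsilon_i P_\ell$ cancel. The annihilation coefficients will then be fixed by imposing $\{G(r), \mathcal{P}_\ell\} = 0$ for $r = -\ell, \dots, -1/2$: this gives $2\ell$ scalar equations in $2\ell - 1$ unknowns, but by the symmetry $B_{r, s} = B_{s, r}$ one equation is redundant (reflecting the fact that the $r \ge 0$ conditions are already met), and the reduced system is uniquely solvable.

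By construction $P_\ell = \tfrac{1}{2\ell}\mathcal{P}_\ell {\bf 1}$. For the final claim, $\mathcal{P}_\ell$ being odd in $\cA^{tw}$ gives $\mathcal{P}_\ell^2 = \tfrac{1}{2}\{\mathcal{P}_\ell, \mathcal{P}_\ell\} = \tfrac{1}{2}\sum_{s, s'} b_s b_{s'} \{G(s), G(s')\}$, which is central and acts by a scalar on $F^{T_i}(\chi)$. Since both the $b_s$'s and the scalars $B_{s, s'}$ are polynomial in $\chi_{-1/2}, \dots, \chi_{-2\ell}$, we obtain $\mathcal{P}_\ell^2 {\bf 1} = \widetilde{S}_{2\ell}(\chi_{-1/2}, \dots, \chi_{-2\ell}){\bf 1}$ for a polynomial $\widetilde{S}_{2\ell}$. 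Nonvanishing will be shown by specialization: setting $\chi_{-k/2} = 0$ for $k < 2\ell$, the system for the $b$'s simplifies drastically and $\mathcal{P}_\ell^2 {\bf 1}$ reduces to a single nonzero monomial in $\chi_{-\ell}$.

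The main obstacle I anticipate is verifying the consistency and uniqueness of the annihilation subsystem. While a counting argument suggests a redundancy among the $2\ell$ equations in $2\ell - 1$ unknowns, identifying and rigorously establishing this redundancy requires careful bookkeeping of the matrix $B$ and its symmetry. A secondary concern is the specialization argument for $\widetilde{S}_{2\ell} \ne 0$: since the $b$'s themselves depend on $\chi$, the dominant $\chi_{-\ell}$-monomial in $\mathcal{P}_\ell^2 {\bf 1}$ must be identified explicitly and shown to be nontrivial.
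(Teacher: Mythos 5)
Your outline is workable in spirit, but it misses the one observation that makes this lemma genuinely ``easy'' (as the paper claims), and that observation would also dissolve the two obstacles you flag at the end. Because $\chi(z)=\sum_{k\ge 0}\chi_{-k/2}z^{k/2-1}$ has no modes $\chi_n$ with $n>0$, the central fields satisfy $S(n)=0$ and $T(n)=0$ for $n>0$ on $F^{T_i}(\chi)$; consequently the scalar $B(r,s)=\{G(r),G(s)\}$ vanishes whenever $r+s>0$, and on the anti-diagonal $r+s=0$ one has $B(r,-r)=(-1)^{2r+1}(\ell^2-r^2)$, which is nonzero for $r\in\{-\ell+\tfrac12,\dots,\ell-\tfrac12\}$. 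Writing the defining conditions $\sum_k b_{-k/2}B(r,-\ell+k/2)=0$ for $r=-\ell+\tfrac12,\dots,\ell-\tfrac12$ (note: the stated range ``$r=-\ell,\dots$'' is inconsistent with the proof of Theorem~\ref{irr-equiv}, where it is stressed that $\{G(-\ell),\mathcal P_\ell\}=0$ holds \emph{iff} $\mathcal P_\ell^2=0$; the intended range starts at $-\ell+\tfrac12$) produces a $(4\ell-1)\times(4\ell-1)$ system that is anti-triangular with nonzero anti-diagonal, hence uniquely solvable. There is no need to split the coefficients into creation and annihilation blocks at all; and if one does split as you do, the correct range gives $2\ell-1$ equations in $2\ell-1$ unknowns, so the ``redundancy by symmetry'' you invoke (to reconcile $2\ell$ equations with $2\ell-1$ unknowns after including $r=-\ell$) is both unnecessary and, as written, not a valid argument — including $r=-\ell$ would force $\det B=0$, which by Theorem~\ref{irr-equiv} is exactly the \emph{reducible} case, so that equation is not generically satisfiable.

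Two further remarks. First, once the $b$'s are produced by the anti-triangular system, the identity $P_\ell=\tfrac{1}{2\ell}\mathcal P_\ell\mathbf 1$ is a \emph{consequence}, not a definition: since $G(r)\mathbf 1=0$ for $r>0$ and $\{G(r),\mathcal P_\ell\}=0$, the vector $\mathcal P_\ell\mathbf 1$ satisfies the same annihilation and $G(0)$-eigenvalue conditions that single out $P_\ell$ in Proposition~\ref{important}, and the $\Phi(-\ell)$-coefficient of $\mathcal P_\ell\mathbf 1$ is $2\ell$. Your plan instead \emph{defines} the creation coefficients by this identity and then must separately argue they agree with the anticommutation-defined ones; that agreement does hold (the $r\ge 0$ block of the system is itself anti-triangular and invertible by the same vanishing), but you never establish it, so uniqueness of $\mathcal P_\ell$ is left open in your account. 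Second, your appeal to Proposition~\ref{important} for the $r\ge 0$ conditions needs the additional observation that $B(r,s)=0$ for $r\ge 0$, $s>0$, so that those conditions see only the creation coefficients — again precisely the vanishing you haven't isolated. Finally, the nonvanishing of $\widetilde S_{2\ell}$ is only sketched; a specialization such as $\chi_{-k/2}=0$ for $0<k<2\ell$ can indeed be pushed through (the system collapses because $B(r,s)$ is then supported on $r+s\in\{0,-\ell,-2\ell\}$), but as you say yourself, the dominant monomial has to be extracted explicitly and you haven't done so. So the proposal is directionally reasonable but has genuine, self-acknowledged gaps whose resolution all hinges on the single fact $B(r,s)=0$ for $r+s>0$ that the write-up never names.
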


\begin{theorem}  \label{irr-equiv} The following conditions are equivalent:
\item[(1)] $F ^{T_i} (\chi) $ is irreducible;
\item[(2)] $  \mathcal{P}_{\ell} ^2 .{\bf 1}  \ne 0$;
\item[(3)] The following determinant is nontrivial: \bea \det \left(  \{ G(r), G(s)\} _{ r,s = -\ell, \dots, \ell-1/2}  \right)  \ne 0. \label{cond-irred-2} \eea
\end{theorem}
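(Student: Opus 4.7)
The plan is to establish the equivalences (1)~$\Leftrightarrow$~(2) and (2)~$\Leftrightarrow$~(3) separately, building on Proposition \ref{important} and Lemma \ref{pmo-3}. Throughout, write $\mathcal{P}_\ell = \sum_{s \in I} c_s G(s)$ with $I = \{-\ell, -\ell + 1/2, \ldots, \ell - 1/2\}$ and $c_{-\ell} = 1$.

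For (1)~$\Leftrightarrow$~(2): The forward direction is a direct consequence of Proposition \ref{important}. If $F^{T_i}(\chi)$ is reducible, any proper submodule $U$ contains $P_\ell = \tfrac{1}{2\ell}\mathcal{P}_\ell \mathbf{1}$ and hence $\mathcal{P}_\ell P_\ell = \tfrac{1}{2\ell}\mathcal{P}_\ell^2 \mathbf{1}$. By Lemma \ref{pmo-3}, this vector is a scalar multiple of $\mathbf{1}$; since $U$ is proper and thus $\mathbf{1} \notin U$, the scalar must vanish, giving $\mathcal{P}_\ell^2 \mathbf{1} = 0$. For the converse, take $U := U(\mathcal{A}^{tw}) \cdot P_\ell$ and show $U$ is a proper submodule. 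The relations $G(r)P_\ell = 0$ for $r > 0$ and $(G(0) + \ell \varepsilon_i)P_\ell = 0$ (built into the construction of $P_\ell$ in Proposition \ref{important}) imply that every vector of $U$ reduces to a sum of monomials $G(-n_1)\cdots G(-n_k) P_\ell$ with $n_j > 0$. A filtration argument on $F^{T_i}(\chi)$ using the Clifford generators $\Phi(-r)$, together with the vanishing $\mathcal{P}_\ell P_\ell = 0$, shows $\mathbf{1} \notin U$, so $F^{T_i}(\chi)$ is reducible.

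For (2)~$\Leftrightarrow$~(3): Since $\mathcal{P}_\ell$ is an odd element,
\begin{equation*}
\mathcal{P}_\ell^2 \mathbf{1} = \tfrac{1}{2}\{\mathcal{P}_\ell,\mathcal{P}_\ell\} \mathbf{1} = \tfrac{1}{2} \sum_{r,s \in I} c_r c_s \{G(r), G(s)\} \mathbf{1}.
\end{equation*}
On the one-dimensional vacuum subspace $\C\mathbf{1} \subset F^{T_i}(\chi)$, the central elements $\mathcal{S}(n)$, $\mathcal{T}(n+1/2)$ and $C$ act as scalars determined by $\chi$ (via the formula for $S(z)$ stated in Theorem \ref{ired-uvod}). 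Hence each $\{G(r), G(s)\}$ acts on $\mathbf{1}$ as a scalar $M_{r,s}$, and $\mathcal{P}_\ell^2 \mathbf{1} = \tfrac{1}{2}(c^{\mathsf T} M c)\,\mathbf{1}$ with $M = (M_{r,s})_{r,s \in I}$. The anti-commutator conditions of Lemma \ref{pmo-3} translate into a linear system on $c$ in terms of $M$; analysing this system via Cramer's rule on an appropriate submatrix shows that $c^{\mathsf T} M c$ is a nonzero constant multiple of $\det M$, whence $\mathcal{P}_\ell^2 \mathbf{1} \neq 0$ if and only if $\det M \neq 0$.

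The main technical obstacle is the reverse direction of (1)~$\Leftrightarrow$~(2), i.e., showing $\mathbf{1} \notin U(\mathcal{A}^{tw}) \cdot P_\ell$ when $\mathcal{P}_\ell P_\ell = 0$. Since $P_\ell$ is not homogeneous under the natural $L_0$-grading on $F^{T_i}(\chi)$---it contains a nonzero vacuum component $a_{-\ell}\mathbf{1}$ by Proposition \ref{important}---a naive weight argument does not suffice; the argument must combine the highest-weight-like relations on $P_\ell$, the Clifford filtration, and the consistency relation $\mathcal{P}_\ell P_\ell = 0$ in order to exclude $\mathbf{1}$ from the generated submodule.
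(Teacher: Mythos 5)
Your treatment of $(2)\Rightarrow(1)$ and of $(2)\Leftrightarrow(3)$ matches the paper in substance. For $(2)\Leftrightarrow(3)$ you phrase things through the quadratic form $c^{\mathsf T}Mc$ rather than through the paper's ``$(1,b_{-1/2},\dots,b_{-2\ell+1/2})$ is a nontrivial solution of the homogeneous system $A_{2\ell}x=0$,'' but both computations hinge on the same two facts -- that $(Mc)_r=0$ for all $r\neq-\ell$ by Lemma~\ref{pmo-3}, and that the corresponding principal minor of $M$ is nonzero (by uniqueness of the $b_{-k/2}$) -- so this is only a cosmetic variant.

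The gap is in $(1)\Rightarrow(2)$, i.e.\ in showing that $\mathcal{P}_\ell^2\mathbf 1=0$ forces $U:=U(\mathcal{A}^{tw})\cdot P_\ell$ to be proper. Your reduction to monomials $G(-n_1)\cdots G(-n_k)P_\ell$ with $n_j>0$ uses only the highest-weight-type relations on $P_\ell$ from Proposition~\ref{important} and does \emph{not} use $\mathcal{P}_\ell P_\ell=0$; taken alone it proves nothing, since the span of those monomials could perfectly well be all of $F^{T_i}(\chi)$ (and indeed is, when $\mathcal{P}_\ell^2\mathbf 1\neq0$). You then say ``a filtration argument \dots together with the vanishing $\mathcal{P}_\ell P_\ell=0$ shows $\mathbf 1\notin U$,'' and in the last paragraph you observe that a naive weight argument fails because $P_\ell$ has a nonzero $\mathbf 1$-component; but you never supply the mechanism that makes the vanishing bite. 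The paper's key step, which your write-up omits, is this: since $G(r)P_\ell=0$ for $r>0$ and $G(0)P_\ell$ is a multiple of $P_\ell$, the relation $\mathcal{P}_\ell P_\ell=0$ expands to
$G(-\ell)P_\ell + b_{-1/2}G(-\ell+\tfrac12)P_\ell + \cdots + b_{-\ell}G(0)P_\ell = 0$,
so $G(-\ell)P_\ell$ lies in the span of $G(-i)P_\ell$ with $0\le i\le\ell-\tfrac12$. This lets one rewrite every element of $U$ as a combination of ordered monomials $G(-n_1)\cdots G(-n_r)P_\ell$ with all $n_j\neq\ell$; each such vector has leading Clifford component $\Phi(-n_1)\cdots\Phi(-n_r)\Phi(-\ell)\mathbf 1$, and $\mathbf 1$ has no $\Phi(-\ell)$, hence $\mathbf 1\notin U$. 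Without the explicit exclusion of $n_j=\ell$ coming from $\mathcal{P}_\ell P_\ell=0$, the filtration argument you gesture at cannot close, precisely because of the inhomogeneity of $P_\ell$ you yourself flag.
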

\begin{proof}
Assume that (2) holds. If $F ^{T_i}(\chi)$ is reducible,  then $U ^{T_i}(\chi)$ is a proper submodule. But,
$$ \mathcal{P}_{\ell} \cdot \mathcal P_{\ell}\cdot {\bf  1} =   \widetilde{S}_{2\ell} (\chi_{-1/2}, \cdots, \chi_{-\ell} ) {\bf 1} \ne 0$$ implies that
${\bf 1} \in U ^{T_i}(\chi)$. This is a contradiction. So $F ^{T_i}(\chi)$ is irreducible.

Assume now that $\mathcal{P}_{\ell} ^2 .{\bf 1} = 0$. This implies that $G(-\ell) P_{\ell}$ is a nontrivial linear combination of
$$ G(-i) P_{\ell}, \quad 0 \le i \le \ell -\frac{1}{2}. $$
This gives that every vector in $U({\cA}^{tw})\cdot P_{\ell}$ is a linear combination of vectors
\bea \label{basis-4}  && G(-n_1) \cdots G(-n_r) P_{\ell}, \quad n_1 > \cdots n_r \ge 0,   \ n_j \ne \ell. \eea
But every vector in (\ref{basis-4}) has non-trivial summand of lowest degree
\bea \label{basis-5}  && \Phi (-n_1) \cdots \Phi(-n_r) \Phi(-\ell) {\bf 1}. \eea
This implies that ${\bf 1} \notin U({\cA}^{tw}) \cdot P_{\ell}$. This proves (1) $\iff$  (2).

Now we shall prove (2) $\iff$ (3).
By construction we have that
$ \{ G(r), \mathcal{P}_{\ell} \} = 0$ for $ r \ge -\ell +1/2$.  Moreover, we have
$$ \{ G(-\ell), \mathcal{P} _{\ell} \} = 0 \iff   \mathcal{P} _{\ell}  ^2 = 0. $$
This implies that ${\mathcal{P}} _{\ell}  ^2 = 0$ if and only if $(1, b_{-1/2}, \cdots, b_{-2\ell + 1/2})$ is the nontrivial solution of the system of linear equations
$$ \{ G(r), x_0 G(-\ell) + \cdots+ x_{-2 \ell + 1/2}  G(\ell -1/2) \} = 0, \quad r=-\ell , \dots, 2 \ell -1/2. $$
The matrix of this linear system is
$$ A_{2 \ell} = \left(  \{ G(r), G(s)\} \right)_{ r,s = -\ell, \dots, \ell-1/2}. $$
Therefore, $\mathcal{P} _{\ell}  ^2 = 0$ if and only if $\det A_{2 \ell} =0$, and this completes the proof.
\end{proof}

\begin{example}
Let $\ell = 1/ 2$. Then:
$$\mathcal{P}_{\ell} = G(-1/2) + \chi_{-1/2} G(0), \quad P_{\ell} = (\Phi(-1/2)  + 2 \chi_{-1/2} \Phi(0) ) {\bf 1}; $$
$$ P_{\ell} ^2  =   \chi_{-1} {\bf 1} .$$
This implies that $F^{T_i}(\chi)$ is irreducible if and only if
$ \chi_{-1} \ne 0. $

Let $\ell =1$. Then
$$\mathcal{P}_{\ell} = G(-1) + 2 \chi_{-1/2} G(-1/2) + (2 \chi_{-1/2} ^2 - 3 \chi_{-1} ) G(0), $$
$$ \mathcal{P}_{\ell} .{\bf 1} = (2 \Phi(-1) + 4 \chi_{-1/2} \Phi(-1/2) + (4 \chi_{-1/2} ^2 - 2 \chi_{-1} ) \Phi(0) ).{\bf 1},  $$
$$ \mathcal{P}_{\ell} ^2  =   8 \chi_{-1} ^2 - 4 \chi_{-2} .$$
This implies that $F^{T_i}(\chi)$ is irreducible if and only if
$ 2 \chi_{-1} ^2 - \chi_{-2}  \ne 0. $

Let $\ell = 3/2 $. By direct calculation we see that
$$ \mathcal{P}_{\ell} ^2 = \nu ( 2 \chi_{-1} ^3 - 3 \chi_{-1} \chi_{-2} +  \chi_{-3}   ) \qquad (\nu \ne 0). $$
Our calculation suggests  that $$\mathcal{P}_{\ell} ^2 = \nu S_{2 \ell} (-2 \chi ^{(1)}  ) = \nu S_{2 \ell} (-2 \chi ^ {(1)}  _{-1} , - 2 \chi  ^ {(1)}  _{-2} ,  \cdots ) . $$
In the following section we shall give a proof of this formula. 
\end{example}

%  We will see in Corollary \ref{ired-expl} that irreducibility on  $F ^{T_i} (\chi)$   does not depend on $\chi^{(2)}$. In  particular, the following theorem holds.
%
%
%
%\begin{theorem} \label{conj-tw-1}
%$F ^{T_i} (\chi)$ is irreducible if and only if $S_{2 \ell} (-2 \chi ^ {(1)}  _{-1} , - 2 \chi  ^ {(1)}  _{-2} ,  \cdots ) \ne 0$, where
%$S_{2 \ell}$ is a Schur polynomial.
%\end{theorem}
%The proof of Theorem \ref{conj-tw-1} will be presented in Section \ref{korepondencija}.

\section{From ${\cV}$--modules to $\widetilde{\cV}$--modules and back}
 \label{korepondencija}
The commutative vertex algebra $M(0)$ generated by fields $\gamma^{\pm} (z)$ can be embedded into the commutative lattice vertex algebra $V_L = M(0) \otimes {\Bbb C} [L]$ associated to the lattice
$$ L= {\Z} \gamma ^{+} + {\Z} \gamma^{-}, \quad \la \gamma ^{\pm}, \gamma ^{\pm} \ra = 0, \quad \la \gamma ^{\pm}, \gamma ^{\mp} \ra = 0. $$
Therefore the vertex superalgebra ${\cV}$ can be embedded into the vertex superalgebra $F \otimes V_L$.

Let $ {\cV} [\Z j]$ be the vertex subalgebra of $F \otimes V_L$ generated by the generators $$ \tau ^{\pm}, j = \frac{\gamma ^+ - \gamma^-}{2}, \nu$$ of ${\cV}$ and by lattice  elements $e ^{j}, e ^{-j}$. Clearly, as a vector space
$${\cV} [\Z j]= {\cV} \otimes C[{\Z} j].$$

Recall that the vertex superalgebra $\widetilde{\cV}$ is generated by $\widetilde{\tau}^{\pm} , \widetilde{\nu}$.  

\begin{proposition} \label{prop-ulaganje}
There exist nontrivial homomorphisms of vertex superalgebras
\bea
\Phi_1 :  \widetilde{\cV} &\rightarrow& {\cV}[\Z j] \label{phi1}\\
\widetilde{\tau} ^{\pm} &\mapsto & \tau^{\pm}_{-1} e ^{\pm j} \nonumber \\
\widetilde{\nu} & \mapsto& \nu + \frac{1}{2}j(-1) ^2 {\bf 1}, \nonumber
\eea
\bea
\Phi_2 :  {\cV} &\rightarrow& \widetilde{\cV} \otimes V_{\Z j} \label{phi2} \\
 {\tau} ^{\pm} &\mapsto &  \widetilde{\tau} ^{\pm}\otimes e ^{\mp j} \nonumber \\
 j &\mapsto& 1 \otimes j \nonumber \\
 {\nu} & \mapsto& \widetilde{\nu }- \frac{1}{2}j(-1) ^2 {\bf 1}. \nonumber
\eea
\end{proposition}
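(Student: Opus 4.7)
The strategy for both maps is the same: verify that the prescribed images satisfy the defining OPE relations of the source vertex superalgebra and then invoke universality — equivalently, the presentation of $\cV$ (resp.\ $\widetilde{\cV}$) via generators $\tau^\pm, j, \nu$ (resp.\ $\widetilde{\tau}^\pm, \widetilde{\nu}$) modulo the relations encoded by the Lie superalgebra $\cA$ at $C = -3$ (resp.\ $\cA$ with the additional constraint $T \equiv 0$). Nontriviality is immediate from the explicit formulas: $\Phi_1(\widetilde{\nu})$ contains the nonzero summand $\nu$, and $\Phi_2(j) = 1\otimes j$ is visibly nonzero.

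The key structural input is that $L = \Z \gamma^+ + \Z \gamma^-$ is totally isotropic, so $V_L$ is commutative and $\langle j, j \rangle = \langle \gamma^\pm, j \rangle = 0$. Combined with $[T(m), G^\pm(r)] = 0$ in $\cA$, this forces $\tau^\pm$ to commute with $e^{\pm j}$ in $\cV[\Z j]$, so the only contractions in the relevant OPEs come from $G^+G^-$ alone or from the lattice factor $Y(e^{\pm j},z)Y(e^{\mp j},w)$ alone. The latter is regular, with Taylor expansion
\[
Y(e^{\pm j},z)\,Y(e^{\mp j},w) \;=\; 1 \pm (z-w)\,j(w) + \tfrac{1}{2}(z-w)^2\bigl(j(w)^2 \pm \partial j(w)\bigr) + O((z-w)^3).
\]

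For $\Phi_1$, I would compute $Y(\tau^+_{-1}e^{j},z)Y(\tau^-_{-1}e^{-j},w) = G^+(z)G^-(w)\cdot Y(e^{j},z)Y(e^{-j},w)$ and multiply the $N=2$-type singular OPE of $G^+G^-$ by the lattice factor. With $C = -3$, the $(z-w)^{-2}$ pole — which in $\cV$ carries the current $T(w)$ — is exactly cancelled by the cross term produced between the triple pole $-(z-w)^{-3}$ and the $(z-w)\,j(w)$ term of the lattice expansion; the $(z-w)^{-1}$ pole regroups into $2\,Y\!\bigl(\nu + \tfrac12 j(-1)^2 {\bf 1},\,w\bigr)$ up to a total derivative. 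The self-OPEs $(\tau^\pm_{-1} e^{\pm j})(z)(\tau^\pm_{-1} e^{\pm j})(w)$ are regular by isotropy and regularity of $G^\pm G^\pm$, and $\nu + \tfrac12 j(-1)^2 {\bf 1}$ is central in $\cV[\Z j]$ because $\nu$ is central in $\cV$, $j$ commutes with itself and with $\tau^\pm$, and $\langle j, j \rangle = 0$. Thus the images satisfy the defining relations of $\widetilde{\cV}$. The verification for $\Phi_2$ is mirror: in $\widetilde{\cV} \otimes V_{\Z j}$, the OPE of $\widetilde{G}^+\widetilde{G}^-$ has no $(z-w)^{-2}$ pole (since $T \equiv 0$ in $\widetilde{\cV}$), and now the factor $Y(e^{-j},z)Y(e^{j},w)$ \emph{reinstates} that pole via its $-(z-w)j(w)$ term, reproducing the role of $T(w)$ with $1 \otimes j(w)$; the simple pole regroups to $2\,Y\!\bigl(\widetilde{\nu} - \tfrac12 j(-1)^2 {\bf 1},\,w\bigr) + \partial(1 \otimes j)(w)$, matching $2S + \partial T$ in $\cV$.

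The main technical point is the $(z-w)^{-1}$ coefficient match, which receives three contributions — the original simple pole $2S + \partial T$, the cross term $T(w)\cdot j(w)$ between the double pole and $(z-w)j$, and the cross term coming from the triple pole against $(z-w)^2 \tfrac12(j^2 + \partial j)$ — and these must regroup to exactly $2\,Y(\nu + \tfrac12 j(-1)^2{\bf 1}, w)$ modulo a derivative. This delicate cancellation is precisely what forces both the specific shift $\tfrac12 j(-1)^2 {\bf 1}$ in the Virasoro-like element and the value $C = -3$ of the central charge.
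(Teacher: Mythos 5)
Your approach is essentially identical to the paper's: both verify that the singular OPE products of the proposed images reproduce the defining $\cA$-relations (with $T\equiv 0$ in $\widetilde{\cV}$, $C=-3$) and conclude that the generator assignment extends to a vertex superalgebra homomorphism. The paper compresses this to three stated $n$-th-product identities whereas you spell out the Taylor expansion of the lattice factor and the pole-by-pole cancellation mechanism; this is more explanatory but not a different argument.
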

\begin{proof}
By using explicit calculation in lattice vertex algebras we get the following relations:
$$(\tau^{+}_{-1} e ^{ j} )_2  (\tau^{-}_{-1} e ^{ -j} ) = (-2){\bf 1}, \ \  (\tau^{+}_{-1} e ^{ j} )_1  (\tau^{-}_{-1} e ^{ -j} )  =  0,  \ \  (\tau^{+}_{-1} e ^{ j} )_1  (\tau^{-}_{-1} e ^{ -j} ) = 2 (\nu + 1/2 j(-1) ^2).  $$
This proves that  $ \Phi_1$ is a vertex algebra homomorphism. The proof for  the automorphism $\Phi_2$ is similar.
\end{proof}

Assume that $U $ is a   ${\cV}$--module such that \bea \label{cond-1} Y_{U } ( j, z) = \chi^{ (2)} (z) = \frac{  \chi ^{(2)} _{0}} {z} + \sum_{n = 1} ^{\infty} \chi ^{(2)} _{-n} z ^{n-1}, \quad
 \chi ^{(2)} _{0} \in {\Z}.  \eea

Then $U[\Z j] = U\otimes C[{\Z} j] $ has the structure of a  ${\cV}[\Z j]$--module  by letting
$$ Y_{ U[\Z j]  }  (e^{\pm j}, z ) =   z^{  \pm \chi ^{(2)} _{0}   }  E ^- ( \mp \chi^{ (2)} , z) e ^{\pm j}$$
where
$$ E ^{-}  (- \chi^{ (2)}, z) = \exp \left( \sum_{k=1} ^{\infty} \frac{\chi^{ (2)}_{ - k}}{ k}  z ^{ k} \right). $$
%(We omit some detalis).

Then by using homomorphism $\Phi_1$ we can treat  $U[\Z j]   $ as a  $\widetilde{{\cV}}$--module.   Assume now that $U$ admits a  $\Z$--gradation which is compatible with the
 $\Z $--gradation of  ${\cV}$:
 $$ U = \bigoplus_{s \in \Z} U ^{(s)}, \quad {\cV}^{(i)} . U ^{(s)} \subset U ^{(s+i)} \qquad (s, i \in \Z). $$
 We have its submodule:
 $$\Phi_1(U )  =\bigoplus_{s \in \Z} U ^{(s)} \otimes e ^{s j}.$$

The proof of  the following result uses Proposition \ref{prop-ulaganje} and it  is analogous to the proof of  Theorem 6.2 from \cite{A-2007}.

\begin{proposition} \label{prop-ir-1}
 Assume that $U$ is a ${\cV}$--module which satisfies  the above conditions.
Then $U$ is an irreducible  $ {\cV}$--module if and only if $\Phi_1(U)$ is an irreducible   $\widetilde{\cV}$--module.
\end{proposition}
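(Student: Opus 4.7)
The plan is to show that $\Phi_1$ induces a bijection between $\Z$--graded $\cV$--submodules of $U$ and $\widetilde{\cV}$--submodules of $\Phi_1(U)$, from which the equivalence of irreducibilities is immediate. Granted such a bijection, any proper nonzero $\widetilde{\cV}$--submodule of $\Phi_1(U)$ yields a proper nonzero $\cV$--submodule of $U$, and vice versa.

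First, I would verify that $\Phi_1(U) = \bigoplus_s U^{(s)} \otimes e^{sj}$ is stable under $\widetilde{\cV}$: using Proposition \ref{prop-ulaganje}, the generators $\widetilde{\tau}^{\pm}$ act by the vertex operator of $\tau^{\pm}_{-1} e^{\pm j}$, which carries $U^{(s)} \otimes e^{sj}$ into $U^{(s\pm 1)} \otimes e^{(s\pm 1)j}$ because $\tau^{\pm}$ shifts the $\cV$--grading by $\pm 1$ and $e^{\pm j}$ shifts the lattice charge by $\pm 1$; similarly $\widetilde{\nu}\mapsto \nu + \tfrac{1}{2} j(-1)^2 \mathbf{1}$ preserves each graded piece. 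The forward map is then formal: if $W = \bigoplus_s W^{(s)} \subseteq U$ is a graded $\cV$--submodule, the same argument shows that $\Phi_1(W) := \bigoplus_s W^{(s)} \otimes e^{sj}$ is a $\widetilde{\cV}$--submodule of $\Phi_1(U)$, and this map is clearly injective.

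For surjectivity, given a nonzero $\widetilde{\cV}$--submodule $W' \subseteq \Phi_1(U)$, I must produce a graded $\cV$--submodule $W \subseteq U$ with $W' = \Phi_1(W)$. The strategy is to first prove $W'$ is compatible with the $\Z$--grading, i.e.\ $W' = \bigoplus_s (W' \cap (U^{(s)} \otimes e^{sj}))$; then set $W^{(s)} := \{u \in U^{(s)} \mid u \otimes e^{sj} \in W'\}$, and use the second embedding $\Phi_2$ of Proposition \ref{prop-ulaganje} to reconstruct the $\cV$--action on $W := \bigoplus_s W^{(s)}$ from the $\widetilde{\cV}$--action on $W'$. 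Indeed, the relation $\Phi_2(\tau^{\pm}) = \widetilde{\tau}^{\pm} \otimes e^{\mp j}$ allows one to recover $Y_U(\tau^{\pm}, z)$ on $W^{(s)}$ from $Y_{\Phi_1(U)}(\widetilde{\tau}^{\pm}, z)$ on $W^{(s)} \otimes e^{sj}$ by "untwisting" with the exponential factor $Y_{U[\Z j]}(e^{\mp j}, z)$, whose coefficients are well defined by the hypothesis $\chi^{(2)}_0 \in \Z$ (so the powers $z^{\pm \chi^{(2)}_0}$ are Laurent monomials).

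The main obstacle is therefore the grading--compatibility of arbitrary $\widetilde{\cV}$--submodules. I expect to handle this by producing an operator in the image of $\widetilde{\cV}$ whose spectrum separates distinct graded components $U^{(s)} \otimes e^{sj}$; a natural candidate is a suitable polynomial in the zero mode of $\widetilde{\nu}$ combined with knowledge of the $\Phi_1$--action, since on $u \otimes e^{sj}$ with $u \in U^{(s)}$ this operator acts by the scalar $\nu_0 u + \tfrac{1}{2}(\chi^{(2)}_0 + s)^2 (u \otimes e^{sj})$, giving distinct eigenvalues in $s$ under mild genericity (and in any case allowing a formal grading decomposition using the $e^{\pm j}$--intertwining operators). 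Once grading--compatibility is established, the remainder of the argument is a direct transcription of Theorem 6.2 of \cite{A-2007}, and the equivalence stated in the proposition follows at once.
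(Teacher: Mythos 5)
Your strategy of shuttling submodules between $U$ and $\Phi_1(U)$ via the two homomorphisms of Proposition \ref{prop-ulaganje} is the right framework, and the step you single out as ``the main obstacle'' --- grading compatibility of a $\widetilde{\cV}$--submodule of $\Phi_1(U)$ --- is indeed where the real work lies. But the mechanism you propose for it rests on a computation that is wrong.

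You assert that the zero mode of $\Phi_1(\widetilde{\nu}) = \nu + \tfrac{1}{2}j(-1)^2\mathbf{1}$ acts on $u \otimes e^{sj}$ (with $u \in U^{(s)}$) by an $s$--dependent eigenvalue $\nu_0 u + \tfrac{1}{2}(\chi^{(2)}_0 + s)^2$. This is false. The lattice $L = \Z\gamma^+ + \Z\gamma^-$ carries the identically zero form, so $\langle j,j\rangle = 0$ and therefore $j(0)e^{sj} = 0$ for every $s$; and condition (\ref{cond-1}) says $Y_U(j,z)=\chi^{(2)}(z)$ is a \emph{scalar} field on $U$, so $j(0)=\chi^{(2)}_0\cdot\mathrm{Id}$ there as well. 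Consequently $Y_{U[\Z j]}(\tfrac{1}{2}j(-1)^2\mathbf{1},z)$ is the scalar field $\tfrac{1}{2}\chi^{(2)}(z)^2$ on all of $\Phi_1(U)$, with no dependence on $s$, and the same is true of the modes of $\nu$ (which lies in the commutative part $M(0)$). So $\widetilde{\nu}_0$ --- and in fact the whole charge-zero commutative part of $\widetilde{\cV}$ --- acts by scalars on $\Phi_1(U)$ and cannot separate the graded components $U^{(s)}\otimes e^{sj}$. The grading-compatibility step is therefore unproved; the parenthetical fall-back you offer (``in any case allowing a formal grading decomposition using the $e^{\pm j}$--intertwining operators'') is exactly the nontrivial content and is not spelled out.

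There is also a secondary gap in the other direction: your bijection treats only $\Z$--graded $\cV$--submodules of $U$, while irreducibility of $U$ requires there to be no proper submodules of any kind. Since all charge-zero generators of $\cV$ likewise act by scalar fields on $U$, there is no evident operator in $\cV$ enforcing gradedness on that side either, so this has to be addressed separately. A correct proof must replace the spectral-separation idea by a structural argument carried out in the ambient $\cV[\Z j]$--module $U[\Z j]$, where the invertible intertwiners $Y_{U[\Z j]}(e^{\pm j},z)$ and the modes of $\tau^{\pm}$ let one compare the two lattices of submodules directly (this is the mechanism behind Theorem~6.2 of \cite{A-2007}, to which the paper appeals), rather than on an eigenvalue of $\widetilde{\nu}_0$.
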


Assume now that $U^{tw}$ is a twisted  ${\cV}$--module such that \bea \label{cond-2} Y_{U ^{tw} } ( j, z) = \chi^{ (2)} (z) = \sum_{n = 0} ^{\infty} \chi ^{(2)} _{1/2-n} z ^{n-3/2}. \eea
This module has the structure of a twisted  ${\cV}[\Z j]$--module  by letting
$$ Y_{U^{tw} }  (e^{\pm j}, z ) =    E_{tw} ^- ( \mp \chi^{ (2)} , z) $$
where
$$ E_{tw} ^{-}  (- \chi^{ (2)}, z) = \exp \left( \sum_{k=1} ^{\infty} \frac{\chi^{ (2)}_{1/2 - k}}{ k-1/2}  z ^{ k-1/2} \right). $$

Then by using homomorphism $\Phi_1$ we can treat  $U^{tw}  $ as a twisted $\widetilde{{\cV}}$--module, which  we shall denote by $\Phi_1(U^{tw})$.

Let $W^{tw}$ be a twisted $\widetilde{{\cV}}$--module and let $\chi^{ (2)} (z) = \sum_{n = 0} ^{\infty} \chi ^{(2)} _{1/2-n} z ^{n-3/2}$ and $V_{\Z j}(\chi^{(2)})$ the associated $1$--dimensional twisted $V_{\Z j}$--module. Using  homomorphism $\Phi_2$ we can consider $W^{tw} \otimes V_{\Z j} (\chi^{(2)})$ as a twisted   ${\cV}$--module. We shall denote this module by  $\Phi_2(W(\chi^{(2)}))$. Now we have the following result.
\begin{proposition} \label{prop-tw}
For every twisted ${\cV}$--module $U^{tw}$ such that the condition (\ref{cond-2}) holds we have that
$$ \Phi_2 (\Phi_1(U^{tw})(\chi^{(2)}) )\cong U^{tw}. $$
In particular, $U^{tw}$ is irreducible if and only if $\Phi_1(U^{tw})$ is irreducible.
\end{proposition}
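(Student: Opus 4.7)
The plan is to verify that, after identifying the underlying vector spaces, the two twisted $\cV$--module structures on $U^{tw}$ coincide. Since $V_{\Z j}(\chi^{(2)})$ is one-dimensional, the underlying space of $\Phi_2(\Phi_1(U^{tw})(\chi^{(2)}))$ is canonically $U^{tw}$ itself (after fixing a generator of the $1$--dimensional module), and the claim reduces to checking that the twisted vertex operators produced through the composition $\Phi_2\circ\Phi_1$ agree with the original $Y^{tw}_{U^{tw}}$ on the generators $\tau^{\pm}$, $j$, $\nu$ of $\cV$.

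The check for $j$ is immediate: $\Phi_2(j)=1\otimes j$, and the action of $1\otimes j$ on $V_{\Z j}(\chi^{(2)})$ is by construction the scalar series $\chi^{(2)}(z)$, matching assumption (\ref{cond-2}). For $\tau^{\pm}$ the key calculation is
$$Y^{tw}(\Phi_2(\tau^{\pm}),z)=Y^{tw}_{\Phi_1(U^{tw})}(\widetilde{\tau}^{\pm},z)\cdot Y^{tw}_{V_{\Z j}(\chi^{(2)})}(e^{\mp j},z)=Y^{tw}_{U^{tw}}(\tau^{\pm}_{-1}e^{\pm j},z)\cdot Y^{tw}_{V_{\Z j}(\chi^{(2)})}(e^{\mp j},z).$$
Because $\tau^{\pm}$ lies in the Clifford sector $F$ and $e^{\pm j}$ lies in the commutative lattice sector $V_L$, the two are mutually local (in fact they commute) inside $F\otimes V_L$. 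Hence the normal ordered product formula for twisted vertex operators together with the defining formula $Y^{tw}_{U^{tw}}(e^{\pm j},z)=E^-_{tw}(\mp\chi^{(2)},z)$ factors the first term as $Y^{tw}_{U^{tw}}(\tau^{\pm},z)\cdot E^-_{tw}(\mp\chi^{(2)},z)$. The second factor equals $E^-_{tw}(\pm\chi^{(2)},z)$ up to a sign by the twisted module structure on $V_{\Z j}(\chi^{(2)})$, so the two exponentials cancel and one recovers $Y^{tw}_{U^{tw}}(\tau^{\pm},z)$. An analogous argument for $\nu$ uses that the correction terms $\pm\tfrac{1}{2}j(-1)^2{\bf 1}$ appearing in the definitions of $\Phi_1$ and $\Phi_2$ cancel when composed, completing the verification of the isomorphism.

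For the irreducibility assertion, the isomorphism induces a bijective correspondence between $\cV$--submodules of $U^{tw}$ and $\widetilde{\cV}$--submodules of $\Phi_1(U^{tw})$: a subspace $W\subseteq U^{tw}$ is invariant under $Y^{tw}_{U^{tw}}(\tau^{\pm},z)$ if and only if it is invariant under $Y^{tw}_{U^{tw}}(\tau^{\pm},z)\cdot E^-_{tw}(\mp\chi^{(2)},z)=Y^{tw}_{\Phi_1(U^{tw})}(\widetilde{\tau}^{\pm},z)$, because $E^-_{tw}(\mp\chi^{(2)},z)$ is an invertible exponential operator; the same trick handles the other generators. Consequently $U^{tw}$ is irreducible as a $\cV$--module if and only if $\Phi_1(U^{tw})$ is irreducible as a $\widetilde{\cV}$--module.

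The main technical obstacle I anticipate is the careful bookkeeping of the normal-ordered product formula for twisted vertex operators used to establish the identity $Y^{tw}_{U^{tw}}(\tau^{\pm}_{-1}e^{\pm j},z)=Y^{tw}_{U^{tw}}(\tau^{\pm},z)\cdot E^-_{tw}(\mp\chi^{(2)},z)$, together with tracking the precise sign constants arising from (\ref{for-vo1})--(\ref{for-vo2}) on the one-dimensional twisted lattice module. Once these prefactors are correctly identified the cancellation of exponentials on the two tensor factors, and hence the remainder of the proof, is formal.
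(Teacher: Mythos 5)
Your proposal is correct and follows essentially the same route as the paper: identify the underlying vector space with $U^{tw}$ and observe that the exponential factor $E^-_{tw}(\mp\chi^{(2)},z)$ introduced by $\Phi_1$ is cancelled by the factor $E^-_{tw}(\pm\chi^{(2)},z)$ coming from the action on $V_{\Z j}(\chi^{(2)})$ under $\Phi_2$, recovering $Y_{U^{tw}}(\tau^\pm,z)$. (One small imprecision: $\tau^\pm$ does not lie purely in the Clifford sector $F$, since it involves $\gamma^\pm(-1)$; the commutation with $e^{\pm j}$ instead follows because the lattice $L$ is isotropic so $V_L$ is a commutative vertex algebra, but this does not affect the argument.)
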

\begin{proof}
First we notice that $\Phi_1(U^{tw})= U^{tw}$ as a vector space and the twisted ${\widetilde{\cV}}$ --module action is uniquely defined  by the fields
$$ Y_{\Phi_1(U^{tw} )} (\widetilde{\tau} ^{\pm} ) = Y_{U^{tw}} ({ \tau} ^{\pm}, z )  E_{tw} ^{-}  (\mp \chi^{ (2)}, z) . $$
Let $M = \Phi_2 (\Phi_1(U^{tw})(\chi^{(2)}) )$. Then as a vector space $M= U^{tw}$ and the twisted ${\cV}$--module action is uniquely defined by the fields 
$$Y_{M} (\tau ^{\pm} , z)  = Y_{U^{tw}} ({ \tau} ^{\pm} , z)  E_{tw} ^{-}  (\mp \chi^{ (2)}, z) E_{tw} ^{-}  (\pm \chi^{ (2)}, z)  = Y_{U^{tw}} ({ \tau} ^{\pm} , z) .$$
The proof follows.
\end{proof}

If we apply   Proposition \ref{prop-tw} on the twisted ${\cV}$-- module $F^{T_i} (\chi)$, we shall get the module $\Phi_1 (F^{T_i} (\chi))$ with central character
$$ S(z) = Y^{tw} ( \nu,z) = \frac{1}{2} ( \chi ^{1} (z) ^2 + \partial_z \chi^{(1)}(z) ) = \sum_{n \in \Z} S(n) z ^{-n-2}.$$ So the anti--commutators   $$A(r,s):= \{ G(r), G(s) \} = (-1) ^{2 r +1} ( 2 \delta_{r+s} ^{\Z}    S (r+s)
-  \delta_{r+s} ^{\Z} ( r  ^ 2 - \tfrac{1}{4} ) \ \delta_{r+s,0} )$$ do not depend on $\chi^{(2)}$.
By using the same arguments as in Theorem \ref{irr-equiv} we get

 \bea  \Phi_1 (F^{T_i} (\chi)) \ \mbox{is irreducible} \ \iff \det \left(  \{ A(r,s) \} _{ r,s = -\ell, \dots, \ell-1/2}  \right)  \ne 0. \label{cond-irred--det-2} \eea

By using elementary properties of determinants, one can easily see that
$$  \det \left(  \{ A(r,s) \} _{ r,s = -\ell, \dots, \ell-1/2}  \right)   =  \mu \det(A(\chi)) $$ where $\mu \ne 0$ and
$$ A(\chi) = \left(
\begin{array}
[c]{ccccc}%
2 S(-1)  & 2 S(-2) & \cdots & \  & 2 S(-2 \ell)\\
\ell ^2 - (\ell-1) ^2 & 2 S(-1)  & 2 S(-2) & \cdots & 2 S(-2 \ell -1) \\
0 & \ell ^2 - (\ell -2) ^2 &  2 S(-1) & \cdots & 2 S(-2\ell-2) \\
\vdots & \ddots & \ddots & \ddots & \vdots\\
0 & \cdots & 0 & \ell ^2 - (\ell  -2 \ell +1) ^2 & 2 S(-1)
\end{array} \right) .
$$

Hence we have:

\begin{corollary} \label{ired-expl}
$F^{T_i} (\chi)$ is irreducible if and only if $ \det(A(\chi)) \ne 0$.
In particular,
the irreducibility of $F^{T_i} (\chi)$ depends only on $\chi^{(1)}$.
\end{corollary}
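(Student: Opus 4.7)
The proof is essentially a direct assembly of the machinery developed in the preceding part of the section, so the plan is mostly one of bookkeeping rather than new ideas. First I would invoke Proposition \ref{prop-tw} to reduce the irreducibility question for $F^{T_i}(\chi)$ to the irreducibility of the twisted $\widetilde{\cV}$--module $\Phi_1(F^{T_i}(\chi))$. This is the crucial step, because the generator $j$ (whose zero mode sees $\chi^{(2)}$) is killed in passing to $\widetilde{\cV}$, so the module $\Phi_1(F^{T_i}(\chi))$ only records the data coming from $S(z)$ and the fermionic field $G(z)$.

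Next, I would apply Theorem \ref{irr-equiv} to $\Phi_1(F^{T_i}(\chi))$: the module is irreducible precisely when the anticommutator matrix $\bigl(\{G(r),G(s)\}\bigr)_{r,s=-\ell,\dots,\ell-1/2}$ is nondegenerate. The key observation is that in $\Phi_1(F^{T_i}(\chi))$ the $\mathcal T$--components of the anticommutators are gone (since the Heisenberg contribution from $j$ has been absorbed into the intertwining operator $E_{tw}^-(\mp\chi^{(2)},z)$), so each entry $A(r,s)=\{G(r),G(s)\}$ is a polynomial only in the modes $S(n)$ of
$$ S(z)=\tfrac{1}{2}\bigl(\chi^{(1)}(z)\bigr)^2+\partial_z\chi^{(1)}(z), $$
together with the central term $(r^2-\tfrac14)\delta_{r+s,0}$.

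Then I would make explicit the entries of this matrix: the diagonal $r+s=0$ entries contribute $2S(-1)$ (up to sign, using $r^2-\tfrac14$ when $r\in\tfrac12+\Z$ so that the central term gives the shift encoded by $\ell^2-(\ell-k)^2$ in the subdiagonal), and the off-diagonal $r+s\in\Z_{<0}$ entries contribute $\pm 2S(r+s)$. A straightforward sign/row-permutation argument, essentially rearranging rows and columns so that the $\mathcal S$--terms align with the main diagonal and the central terms form the first subdiagonal, shows that
$$ \det\bigl(\{G(r),G(s)\}_{r,s=-\ell,\dots,\ell-1/2}\bigr)=\mu\,\det(A(\chi)) $$
for some nonzero scalar $\mu$. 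Combining this with the two equivalences above yields the stated criterion.

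Finally, the last sentence of the corollary is immediate from this identification: the matrix $A(\chi)$ is built entirely from the modes of $S(z)$, which in turn depends only on $\chi^{(1)}$, so irreducibility is independent of $\chi^{(2)}$. The only mildly delicate point in the plan is the explicit verification that the anticommutator matrix rearranges into precisely the displayed form of $A(\chi)$ with the correct subdiagonal coefficients $\ell^2-(\ell-k)^2$; this is a finite combinatorial check using the defining relations of $\cA^{tw}$, and no genuine obstacle is expected.
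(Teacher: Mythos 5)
Your proposal follows the paper's argument step for step: reduce to the $\widetilde{\cV}$--module $\Phi_1(F^{T_i}(\chi))$ via Proposition \ref{prop-tw} so that the $\mathcal T$--contribution (and hence all dependence on $\chi^{(2)}$) drops out of the anticommutators, run the argument of Theorem \ref{irr-equiv} on that module to obtain the determinant criterion, and then identify $\det\bigl(\{G(r),G(s)\}\bigr)$ with $\mu\det(A(\chi))$ for a nonzero scalar $\mu$ by elementary row/column manipulations. This is exactly the route taken in the text, and the index bookkeeping you flag as a ``finite combinatorial check'' is the same gap the paper leaves to the reader.
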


 The irreducibility result for  untwisted ${\cV}$--modules was proved in \cite{A-2013}. But we can also use Proposition \ref{prop-ulaganje} to relate irreducible ${\cV}$--modules and irreducible ${\widetilde{\cV}}$--modules. In the next lemma we shall use this construction to get more precise irreducibility result.

\begin{lemma} \label{det1}
Let
$$S(z) =  \frac{1}{2} ( \chi ^{(1)} (z) ^2 + \partial_z \chi^{(1)} (z) ) = \sum_{n \in \Z} S(n) z ^{-n-2}.$$
Then
$$ \det(A(\chi))   \ne 0 \iff   S_{ 2 \ell } (-2 \chi_{-1} , - 2 \chi_{-2} , \dots  ) \ne 0. $$
\end{lemma}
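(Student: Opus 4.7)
My plan is to interpret $A(\chi)$ as the Frobenius obstruction matrix for the ODE $\Psi'' = 2S(z)\Psi$ at the gap of size $2\ell$ between its indicial roots at $z = 0$, and then match this obstruction to $S_{2\ell}(-2\chi_{-1}, -2\chi_{-2}, \ldots)$ via the defining generating function for Schur polynomials. First I would introduce $\Psi_1(z) := z^{\ell + 1/2} F(z)$, where $F(z) := \exp\bigl(\sum_{k\ge 1} \chi_{-k}\, z^{k}/k\bigr)$. A direct computation gives $\Psi_1'/\Psi_1 = \chi^{(1)}(z)$, hence $\Psi_1''/\Psi_1 = (\chi^{(1)}(z))^{2} + \partial_z \chi^{(1)}(z) = 2S(z)$, so $\Psi_1$ solves the ODE. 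The indicial roots at $z = 0$ are $\ell + 1/2$ and $1/2 - \ell$, differing by the nonnegative integer $2\ell$; this is precisely the setting in which a logarithmic term may obstruct the second Frobenius solution at the smaller root.

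The Schur polynomial enters as follows. Using the generating function $\exp\bigl(\sum_{k\ge 1} x_k\, z^{k}/k\bigr) = \sum_{n\ge 0} S_n(x_1, x_2, \ldots)\, z^{n}$ with $x_k = -2\chi_{-k}$ gives $F(z)^{-2} = \sum_{n\ge 0} S_n(-2\chi_{-1}, -2\chi_{-2}, \ldots)\, z^{n}$, so $\Psi_1(z)^{-2} = z^{-2\ell - 1}\sum_n S_n z^{n}$. I would then apply the reduction-of-order formula $\Psi_2(z) := \Psi_1(z) \int \Psi_1(z)^{-2}\, dz$; integrating term by term, every monomial antiderivative is a pure power of $z$ except at $n = 2\ell$, which produces a logarithmic term with coefficient $\Psi_1(z)\cdot S_{2\ell}(-2\chi_{-1}, \ldots)$. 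Consequently, a log-free Frobenius second solution at the lower indicial root exists if and only if $S_{2\ell}(-2\chi_{-1}, \ldots) = 0$.

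To bring $A(\chi)$ into the picture, I would substitute the Frobenius ansatz $\Psi_2 = z^{1/2 - \ell}(1 + b_1 z + b_2 z^{2} + \cdots) + C\Psi_1\log z$ into $\Psi'' = 2S\Psi$ and match powers to derive the recursion $b_k\, k(k - 2\ell) = \sum_{j=1}^{k} 2S(-j)\, b_{k-j}$. This uniquely determines $b_1, \ldots, b_{2\ell - 1}$ from $b_0 = 1$; at $k = 2\ell$ the left-hand side vanishes, so the equation becomes a compatibility condition of the form $\sum_{j=1}^{2\ell} 2S(-j)\, b_{2\ell - j} = (\textrm{nonzero scalar})\cdot C$. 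Regarding the family of $2\ell$ equations as a single $2\ell \times 2\ell$ lower-triangular linear system for $(b_1, \ldots, b_{2\ell})$ with $b_0 = 1$ moved to the inhomogeneity and with its $(2\ell, 2\ell)$ diagonal entry equal to zero, consistency is equivalent to the vanishing of the determinant of the matrix formed by replacing the (identically zero) last column by the inhomogeneity $(-2S(-1), -2S(-2), \ldots, -2S(-2\ell))^{T}$. A cyclic column permutation and a sign change identify this determinant with $\pm \det A(\chi)$.

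Combining, both $\det A(\chi)$ and $S_{2\ell}(-2\chi_{-1}, -2\chi_{-2}, \ldots)$ vanish precisely when the second Frobenius solution has no logarithmic term, so they cut out the same polynomial locus. Since both are polynomials homogeneous of weighted degree $2\ell$ (where $\chi_{-k}$ has weight $k$) and $S_{2\ell}$ is irreducible (it is linear in $\chi_{-2\ell}$ with nonzero leading coefficient $-1/\ell$ by Newton's identity), we must have $\det A(\chi) = \kappa_\ell\cdot S_{2\ell}(-2\chi_{-1}, \ldots)$ for a nonzero constant $\kappa_\ell$, whose value can be pinned down by comparing the coefficients of $\chi_{-2\ell}$ on both sides. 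The main obstacle will be the column-permutation and sign-bookkeeping step identifying the compatibility determinant with $\pm \det A(\chi)$; the remaining ingredients (the verification that $\Psi_1$ solves the ODE, the Schur-polynomial expansion of $F^{-2}$, and the standard Frobenius recursion) are routine.
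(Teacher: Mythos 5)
Your proposal is correct and takes a genuinely different route from the paper.

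The paper proves Lemma \ref{det1} representation-theoretically: it cites \cite[Theorem 4.1]{A-2013} for the irreducibility criterion ($S_{2\ell}(-2\chi_{-1},\dots)\ne 0$) for the \emph{untwisted} $\cV$-module $\widetilde F_{2\chi^{(1)}}$, transports this through the homomorphism $\Phi_1$ of Proposition \ref{prop-ulaganje}, and observes (as in Theorem \ref{irr-equiv}) that irreducibility of $\Phi_1(\widetilde F_{2\chi^{(1)}})$ is equivalent to the non-vanishing of a determinant which equals $\det A(\chi)$ up to a nonzero scalar. Your argument bypasses all of this: you realize $A(\chi)$ as the Frobenius compatibility matrix for the ODE $\Psi''=2S(z)\Psi$, note that $\Psi_1=z^{\ell+1/2}\exp(\sum_{k\ge1}\chi_{-k}z^k/k)$ is an explicit solution with $\Psi_1'/\Psi_1=\chi^{(1)}$, and use reduction of order to show the $\log$ coefficient of the second solution is exactly $S_{2\ell}(-2\chi_{-1},\dots)$, straight from the defining generating function of the Schur polynomials. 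I checked the matching of your compatibility matrix $M'$ with $A(\chi)$ carefully: the recursion $b_k\,k(k-2\ell)=\sum_{j=1}^k 2S(-j)b_{k-j}$ with $b_0=1$ gives a $2\ell\times 2\ell$ system with zero last column; replacing that last column by the inhomogeneity $(2S(-1),\dots,2S(-2\ell))^T$, cyclically permuting it to position $1$, negating it, and transposing yields $-A(\chi)$, and the diagonal entries $k(k-2\ell)=-(\ell^2-(\ell-k)^2)$ align with the paper's subdiagonal. The net result is $\det M'=\det A(\chi)$ exactly, which I confirmed against the paper's explicit examples for $\ell=1/2,1,3/2$. Your approach is self-contained (no appeal to \cite{A-2013}), more elementary, and structurally explanatory: both $\det A(\chi)$ and $S_{2\ell}$ are the logarithmic obstruction of the same second-order ODE. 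Note also that for the iff statement of the lemma, your final scalar-pinning step (homogeneity and irreducibility of $S_{2\ell}$) is a bonus rather than a necessity: the pointwise equivalence of vanishing already follows from the ODE interpretation.
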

\begin{proof}
We shall prove the assertion in the case $\ell \in   \tfrac{1}{2} + {\Z}$.  The case $\ell \in  {\Z}$ is similar (see Remark \ref{ramond} below).

Consider now the ${\cV}$--module $\widetilde{F}_{2 \chi ^{(1)} }$ from \cite{A-2013}. By \cite[Theorem 4.1]{A-2013} we have that  $$ \widetilde{F}_{2 \chi ^{(1)} }   \quad \mbox{is irreducible}   \ \iff S_{ 2 \ell } (-2 \chi_{-1} , - 2 \chi_{-2} , \dots  ) \ne 0. $$
If we apply Proposition  \ref{prop-ir-1} for $U= \widetilde{F}_{2 \chi ^{(1)} }$ we shall get the $\widetilde{\cV}$--module  $\Phi_1 (  \widetilde{F}_{2 \chi ^{(1)} } )$ with central character
$$ S(z) = Y( \nu,z) = \frac{1}{2} ( \chi ^{1} (z) ^2 + \partial_z \chi^{(1)}(z) ) = \sum_{n \in \Z} S(n) z ^{-n-2}.$$ So the anti--commutators   are $$A(r,s):= \{ G^+(r), G^-(s) \} =       2 S (r+s)
-  ( r  ^ 2 - \tfrac{1}{4} ) \ \delta_{r+s,0} \quad (r, s \in \tfrac{1}{2} + \Z). $$
By repeating the same arguments as in the proof of  Theorem \ref{irr-equiv} , we get that
$$\Phi_1 (  \widetilde{F}_{2 \chi ^{(1)} } )  \quad \mbox{is irreducible}   \ \iff  \det \left(  \{ A(r,s) \} _{ r,s = -\ell  , \dots, \ell-1}  \right)  \ne 0. $$
Since the last determinant is equal to $ \nu \det(A(\chi))$ for certain $\nu \ne 0$, the proof follows.
\end{proof}

\begin{remark} \label{ramond} When  $\ell \in  {\Z}$, then $\Phi_1 (  \widetilde{F}_{2 \chi ^{(1)} } )$ is a Ramond--twisted module for $\widetilde{\cV}$. All arguments are the same as in the Neveu-Schwarz case above.
\end{remark}

The next theorem  follows from  Lemma \ref{det1}  and Corollary \ref{ired-expl}.

\begin{theorem} \label{conj-tw-1}
$F ^{T_i} (\chi)$ is irreducible if and only if $S_{2 \ell} (-2 \chi ^ {(1)}  _{-1} , - 2 \chi  ^ {(1)}  _{-2} ,  \cdots ) \ne 0$, where
$S_{2 \ell}$ is a Schur polynomial.
\end{theorem}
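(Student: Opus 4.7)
The plan is straightforward: the theorem is an immediate consequence of Corollary~\ref{ired-expl} chained with Lemma~\ref{det1}, both of which have been established in the preceding subsection. First I would invoke Corollary~\ref{ired-expl} to reduce the irreducibility of $F^{T_i}(\chi)$ to the non-vanishing of the determinant $\det(A(\chi))$, noting in particular that (as recorded in that corollary) the determinant depends only on the integer-indexed part $\chi^{(1)}$ of $\chi$, so the modes $\chi^{(2)}_{1/2-n}$ play no role whatsoever in the irreducibility criterion.

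Next I would apply Lemma~\ref{det1}, which identifies $\det(A(\chi)) \ne 0$ with the Schur polynomial condition $S_{2\ell}(-2\chi^{(1)}_{-1}, -2\chi^{(1)}_{-2}, \ldots) \ne 0$. The proof of that lemma proceeded by transferring the question to the untwisted setting: one applies Proposition~\ref{prop-ir-1} to the ${\cV}$-module $\widetilde{F}_{2\chi^{(1)}}$ of \cite{A-2013}, uses the vertex algebra homomorphism $\Phi_1$ of Proposition~\ref{prop-ulaganje} to produce an equivalent twisted $\widetilde{\cV}$-module whose irreducibility is encoded by the same anti-commutator determinant up to a nonzero scalar, and then invokes \cite[Theorem 4.1]{A-2013} to recognize this determinant as the Schur polynomial (with the two parity cases $\ell \in \tfrac{1}{2}+\Z$ and $\ell \in \Z$ handled by the Neveu--Schwarz and Ramond versions respectively, as indicated in Remark~\ref{ramond}).

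Composing the two equivalences,
\begin{equation*}
F^{T_i}(\chi) \ \text{is irreducible} \ \iff \ \det(A(\chi)) \ne 0 \ \iff \ S_{2\ell}(-2\chi^{(1)}_{-1}, -2\chi^{(1)}_{-2}, \ldots) \ne 0,
\end{equation*}
yields the theorem. There is no substantive obstacle at this final stage, since the technical content has already been absorbed into Corollary~\ref{ired-expl} (which removed the dependence on $\chi^{(2)}$) and Lemma~\ref{det1} (which matched the two determinants). The role of the theorem is to extract a clean invariant reformulation: only the integral part $\chi^{(1)}$ of $\chi$ controls irreducibility, and the criterion is given by a single Schur polynomial that can be checked by inspection, in agreement with the small-$\ell$ computations in the example at the end of Section~\ref{constr-2}.
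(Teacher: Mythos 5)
Your proposal is exactly the paper's argument: the theorem is derived by composing Corollary~\ref{ired-expl} (irreducibility of $F^{T_i}(\chi)$ is equivalent to $\det(A(\chi)) \ne 0$, and depends only on $\chi^{(1)}$) with Lemma~\ref{det1} (which identifies $\det(A(\chi)) \ne 0$ with the Schur polynomial condition). Your recap of the internal proof of Lemma~\ref{det1} is also faithful to the paper, including the Neveu--Schwarz/Ramond split noted in Remark~\ref{ramond}.
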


\section{Twisted $V_{-2}(sl_2)$--modules }
\label{sl2-section}
 Let $V_k(sl_2)$ be the universal affine vertex algebra of level $k$ associated to $\widehat{sl_2}$. Let $e, f, h$ be the standard basis of $sl_2$. For $x \in sl_2$ we identify
 $x$ with $x(-1) {\bf 1}$. Let  $\Theta$ be the automorphism of $V_k(sl_2)$ such that
 $$ \Theta (e ) = f, \ \Theta (f) = e, \ \Theta (h) = -h. $$

 Let $\widehat{sl_2}[\Theta]$ be the affine Lie algebra $\widehat{sl_2}$ in the principal gradation (cf. \cite{FLM}).
Recall that $\widehat{sl_2}[\Theta]$ has basis:
$$\{ K,  \ h(m), \ x^{+} (n), \  x^{-} (p) \ \vert m, p \in \tfrac{1}{2} + \Z, \ n \in {\Z} \}$$
with commutation relations:
\bea
&& [ h(m), h(n) ] =  2 m \delta_{m+n,0} K \nonumber \\
&& [ h(m), x^{+}(r) ] =  2  x ^{-} (m+r) \nonumber \\
&& [ h(m), x^{-}(n) ] =  2  x ^{+} (m+n) \nonumber \\
&& [ x^{+}(r), x^{+}(s) ] =  2  r \delta_{r+s,0} K \nonumber \\
&& [ x^{+} (r), x^{-}(m) ] =  - 2  h  (m+r) \nonumber \\
&& [ x^{-} (m) , x^{-}(n) ] =  - 2 m   \delta_{m+n,0} K \nonumber \\
&& K \ \ \mbox{in the center} \nonumber
\eea
for $m, n \in \tfrac{1}{2} + \Z$, $r, s \in \Z$.
Define:
\begin{align*}X_{\pm} (z) &= \frac{1}{2} ( \sum_{n \in {\Z}} x^{+} (n) z ^{-n-1} \pm \sum_{n \in  \tfrac{1}{2} + {\Z} } x^{-} (n)  z^{-n-1} ), \\
h(z) &= \sum_{ n \in \tfrac{1}{2} + {\Z} } h(n) z ^{-n-1}.
\end{align*}

Recall   (cf. \cite{HJM})  that  the irreducible highest weight $\widehat{sl_2}[\Theta]$--module $L( \frac{k}{2} (\Lambda_0 + \Lambda_1) + j (\Lambda_1-\Lambda_0) )$ of level $k$ is  generated by highest weight vector $v_j$ which satisfies condition
$$ x^{+} (n) v_j = x^- (p) v_j = h(m) v_j = 0 \quad (n, m , p > 0), \quad x^+ (0) v_j = j v_j. $$

 The following result is standard (see \cite{FLM}).
\begin{proposition}
The category of $\Theta$--twisted $V_{k }(sl_2)$--modules coincides with the category of restricted modules for $\widehat{sl_2}[\Theta]$ of level $k$.
\end{proposition}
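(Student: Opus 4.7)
The plan is to establish a two-way correspondence, in the same style as the previous theorem for twisted $\mathcal{V}$-modules. In one direction, given any $\Theta$-twisted $V_k(sl_2)$-module $(M,Y^{tw})$, I would first decompose $sl_2$ into $\Theta$-eigenspaces: $sl_2 = sl_2^{+} \oplus sl_2^{-}$, where $sl_2^{+} = \mathbb{C}(e+f)$ and $sl_2^{-} = \mathbb{C}(e-f) \oplus \mathbb{C} h$. Since $\Theta$ has order two, the general theory of twisted modules (cf.\ \cite{FLM}, \cite{Li-twisted}) forces $Y^{tw}(u,z) \in \mathrm{End}(M)[[z,z^{-1}]]$ to involve only integer powers of $z$ when $u \in sl_2^{+}$ and only half-integer powers when $u \in sl_2^{-}$. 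Reading off coefficients, I would then define
\[
x^{+}(n) := 2\,Y^{tw}(e+f,z)_{n},\quad n \in \mathbb{Z}, \qquad x^{-}(n) := 2\,Y^{tw}(e-f,z)_{n},\ \ h(n) := Y^{tw}(h,z)_{n},\quad n \in \tfrac12 + \mathbb{Z},
\]
so that precisely the mode expansions defining $X_{\pm}(z)$ and $h(z)$ in the statement are reproduced, and the level $K$ acts as the scalar $k$.

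The next step is to verify the commutation relations of $\widehat{sl_2}[\Theta]$. For this I would invoke the twisted commutator formula
\[
[Y^{tw}(u,z_1), Y^{tw}(v,z_2)] = \sum_{j \ge 0} Y^{tw}(u_j v, z_2)\, \frac{1}{j!}\Bigl(\frac{\partial}{\partial z_2}\Bigr)^{j} z_2^{-1}\delta^{tw}\!\Bigl(\frac{z_1}{z_2}\Bigr),
\]
where the twisted delta distribution selects the appropriate integer or half-integer modes depending on the $\Theta$-parity of $u,v$. Applying this to the pairs $(e\pm f, e\pm f)$ and $(h, e\pm f)$ and using the OPE data inside $V_k(sl_2)$ (namely $e_0 f = h$, $e_1 f = k \mathbf{1}$, $h_0 e = 2e$, $h_0 f = -2f$, $h_1 h = 2k\mathbf{1}$) produces exactly the listed brackets once the combinatorial factors coming from the $\tfrac12$-integer indexing are tracked. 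Each of the seven relations $[h(m),h(n)]$, $[h(m),x^{\pm}(r)]$, $[x^{\pm}(r),x^{\pm}(s)]$, $[x^{+}(r),x^{-}(m)]$ is obtained by evaluating the twisted commutator formula on the appropriate pair; the $\pm$-signs and the mode parities are exactly those forced by the $\Theta$-grading.

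Conversely, given a restricted module $W$ for $\widehat{sl_2}[\Theta]$ of level $k$, I would build the twisted fields $X_+(z)+X_-(z) = Y^{tw}(e,z)$, $X_+(z)-X_-(z) = Y^{tw}(f,z)$, $h(z) = Y^{tw}(h,z)$ and extend to all of $V_k(sl_2)$ by the twisted reconstruction theorem of Li \cite{Li-twisted} (see also \cite{B}, \cite{K}). Restrictedness ensures the resulting fields are mutually local in the appropriate twisted sense, so the resulting structure is a genuine $\Theta$-twisted $V_k(sl_2)$-module. The two constructions are manifestly inverse to each other on objects and morphisms, and both preserve submodules, giving the equivalence of categories.

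The one step that needs care is the bookkeeping of the twisted delta distribution and the sign/parity conventions when extracting modes of $x^{\pm}(r)$ from $Y^{tw}(e\pm f,z)$, because the given brackets (e.g.\ $[h(m),x^{+}(r)] = 2 x^{-}(m+r)$ rather than a symmetric expression) reflect a specific normalization. Once the normalization in the display defining $X_{\pm}(z)$ and $h(z)$ is matched with the coefficient conventions used on the vertex algebra side, the verification becomes routine, and the proposition follows as a direct twisted-module analogue of the standard untwisted equivalence between $V_k(sl_2)$-modules and smooth $\widehat{sl_2}$-modules of level $k$.
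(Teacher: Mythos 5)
The paper gives no proof here --- it simply declares the result standard and cites \cite{FLM} --- so there is no argument to compare against line by line. Your outline (decompose $sl_2$ into $\Theta$-eigenspaces, read off the integer/half-integer mode parities, verify the bracket relations with the twisted commutator formula, and run Li's twisted reconstruction theorem in the converse direction) is precisely the standard argument the paper has in mind, and the structure of your proof is correct.

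One normalization slip is worth fixing, because as written the two halves of your proposal contradict each other. In the forward direction you correctly put $e+f$ in the $+1$ eigenspace, so $Y^{tw}(e+f,z)$ carries only integer modes; with the convention $Y^{tw}(u,z)=\sum_n u_n z^{-n-1}$ and $\langle e+f,e+f\rangle =2$, this forces $x^+(n)=(e+f)_n$ (no extra factor of $2$), $x^-(m)=(e-f)_m$, and hence $Y^{tw}(e,z)=X_+(z)$, $Y^{tw}(f,z)=X_-(z)$. In the converse direction, however, you write $X_+(z)+X_-(z)=Y^{tw}(e,z)$, which cannot hold: $e$ is not a $\Theta$-eigenvector, so $Y^{tw}(e,z)$ must have both integer and half-integer modes, whereas $X_+(z)+X_-(z)=\sum_{n\in\mathbb Z}x^+(n)z^{-n-1}$ has only integer modes. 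The correct dictionary for the reconstruction step is $Y^{tw}(e+f,z)=X_+(z)+X_-(z)$ and $Y^{tw}(e-f,z)=X_+(z)-X_-(z)$ (equivalently $Y^{tw}(e,z)=X_+(z)$, $Y^{tw}(f,z)=X_-(z)$), which is also what makes the displayed brackets $[x^+(r),x^+(s)]=2r\delta_{r+s,0}K$ and $[x^+(r),x^-(m)]=-2h(m+r)$ drop out of $\langle e+f,e+f\rangle=2$ and $[e+f,e-f]=-2h$ without any fudge factors. With that fixed, the rest of your verification goes through as you describe.
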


Recall that (cf. \cite{A-2007}) the universal affine vertex algebra $V_{-2} (sl_2)$ can be realized as a subalgebra of ${\cV} \otimes F_{-1}$  with generators
$$ e = \tau ^+ \otimes e ^{\beta}, \ h = - 2 \beta(-1) + 2  j, \ f = \tau ^- \otimes e ^{-\beta}. $$
%Note $[h(m), h(n)]=2\delta_{m, -n}(-2)$ so the embedding of $V_{-2}$.

 By Section 6 of  \cite{A-2007} we have that $\mathcal{\cV} \otimes F_{-1}$ is  ${\Z}$--graded:
$$\mathcal{\cV} \otimes F_{-1} = \oplus_{s \in {\Z} } W(s)$$
where $W(0) $ is a subalgebra of $\mathcal{\cV} \otimes F_{-1}$ generated by $e,f,h $ and $j = T(-1) {\bf 1} \otimes {\bf 1}$, and $W(s) = W(0). ({\bf 1} \otimes  e ^{s \beta}). $ Moreover,
$$ W(0) \cong V_{-2} (sl_2) \otimes M_T(0), \quad W(s) \cong \pi_s( V_{-2} (sl_2) ) \otimes M_T(0), $$
where $$\pi_s(V_{-2} (sl_2) ) = V_{-2} (sl_2) . e^ {s \beta}. $$ Since $M_T(0) \subset {\cV}$ we get:

\begin{proposition} \label{description}
$$\widetilde{\cV} \otimes F_{-1} \cong \bigoplus _{s \in {\Z} } \pi_s (V_{-2} (sl_2)). $$
In particular,
$$V_{-2} (sl_2) \cong \bigoplus_{i \in \Z}  \widetilde{\cV} ^{(i)} \otimes F_{-1} ^{(i)}. $$
\end{proposition}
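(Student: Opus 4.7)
The plan is to derive this proposition directly from the decomposition of $\mathcal{V}\otimes F_{-1}$ recalled from Section 6 of \cite{A-2007}: namely,
$$\mathcal{V}\otimes F_{-1}=\bigoplus_{s\in\Z}W(s),\qquad W(s)\cong \pi_s(V_{-2}(sl_2))\otimes M_T(0).$$
Since $\widetilde{\cV}=\mathcal{V}/\widetilde I$ with $\widetilde I=U(\cA).M_T(0)$, I would first observe that $\widetilde I\otimes F_{-1}$ is an ideal of $\mathcal{V}\otimes F_{-1}$ and that under the above decomposition its intersection with $W(s)$ corresponds to the factor $\pi_s(V_{-2}(sl_2))\otimes (M_T(0))_+$, where $(M_T(0))_+$ is the augmentation ideal of the commutative vertex algebra $M_T(0)$. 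Passing to the quotient then gives
$$\widetilde{\cV}\otimes F_{-1}=(\mathcal{V}\otimes F_{-1})/(\widetilde I\otimes F_{-1})\cong\bigoplus_{s\in\Z}W(s)/(W(s)\cap(\widetilde I\otimes F_{-1}))\cong\bigoplus_{s\in\Z}\pi_s(V_{-2}(sl_2)),$$
which is the first isomorphism.

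For the ``in particular'' statement, I would use the $\Z$--gradation. The gradation (\ref{grad-1}) on $\mathcal{V}$ descends to $\widetilde{\cV}$ because $\widetilde I$ is $\Z$--graded, and $F_{-1}=\oplus_i F_{-1}^{(i)}$ with $F_{-1}^{(i)}=M_{\Z}(1).e^{i\beta}$. Inspecting the generators of the embedded $V_{-2}(sl_2)$, one sees that $e=\tau^+\otimes e^{\beta}$ lies in $\mathcal{V}^{(1)}\otimes F_{-1}^{(1)}$, $f=\tau^-\otimes e^{-\beta}$ lies in $\mathcal{V}^{(-1)}\otimes F_{-1}^{(-1)}$, and each summand of $h=-2\beta(-1)+2j$ lies in $\mathcal{V}^{(0)}\otimes F_{-1}^{(0)}$. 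Hence the vertex subalgebra generated by $\{e,f,h\}$ inside $\widetilde{\cV}\otimes F_{-1}$ is contained in the ``diagonal'' subspace $D:=\bigoplus_{i\in\Z}\widetilde{\cV}^{(i)}\otimes F_{-1}^{(i)}$.

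To finish, I would show $V_{-2}(sl_2)=\pi_0(V_{-2}(sl_2))=D$. The inclusion $\pi_0(V_{-2}(sl_2))\subset D$ is what we just established. For the other direction, note that in the identification $\widetilde{\cV}\otimes F_{-1}\cong\bigoplus_{s\in\Z}\pi_s(V_{-2}(sl_2))$ above, the summand $\pi_s(V_{-2}(sl_2))=V_{-2}(sl_2).e^{s\beta}$ is obtained by shifting the diagonal grading by $s$, so it lies in $\bigoplus_{i}\widetilde{\cV}^{(i)}\otimes F_{-1}^{(i+s)}$ and meets $D$ only when $s=0$. Intersecting both sides of the first isomorphism with $D$ thus leaves only the $s=0$ component, yielding $V_{-2}(sl_2)\cong D$.

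The main obstacle is book-keeping the two distinct $\Z$--decompositions in play—the ``external'' decomposition by $s$ (indexing the $W(s)$) and the ``internal'' $\Z$--gradation on $\widetilde{\cV}\otimes F_{-1}$—and verifying they align so that $\pi_s(V_{-2}(sl_2))$ sits precisely in the off-diagonal $\bigoplus_i\widetilde{\cV}^{(i)}\otimes F_{-1}^{(i+s)}$. Once this compatibility is checked on the generators and extended by the Jacobi identity, both assertions of the proposition follow.
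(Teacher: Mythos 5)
Your argument is correct and is essentially the argument the paper intends: it recalls from \cite{A-2007} that $\cV\otimes F_{-1}=\oplus_s W(s)$ with $W(s)\cong\pi_s(V_{-2}(sl_2))\otimes M_T(0)$, and then dismisses the proposition with the single remark ``Since $M_T(0)\subset\cV$ we get:'', leaving the reader to quotient out the $M_T(0)$ factor exactly as you do. Your identification of $(\widetilde I\otimes F_{-1})\cap W(s)$ with $\pi_s(V_{-2}(sl_2))\otimes(M_T(0))_+$, and your book-keeping of the two $\Z$--gradations (the ``external'' index $s$ versus the ``internal'' diagonal gradation, with $\pi_s$ sitting in $\oplus_i\widetilde{\cV}^{(i)}\otimes F_{-1}^{(i+s)}$) supply the details the paper suppresses; they are correct and consistent with the definition of $W(0)$ as the subalgebra generated by $e,f,h,j$, all of which lie on the diagonal.
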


Let $\Theta = \Theta_{\cV} \otimes \Theta_{F_{-1}}$. Then $$ \Theta (e) = f, \ \Theta (f) = e, \ \Theta (h) = -h. $$
Therefore $\Theta$ can be considered as an automorphism of the vertex algebra $V_{-2}(sl_2)$.

The following result is analogous to the main result of \cite{A-2007}.

\begin{theorem} \label{main-t1}
Assume that $U^{tw}$ is any irreducible $\Theta_{\cV}$--twisted ${\cV}$--module. Then for $i=1,2$
$$  U ^{tw} \otimes F_{-1} ^{T_i}$$
is an irreducible $\Theta$--twisted $V_{-2}(sl_2)$--module. In particular, $  U ^{tw} \otimes F_{-1} ^{T_i}$ is an irreducible module for $\widehat{sl_2}[\Theta]$ at the critical level. The action of  $\widehat{sl_2}[\Theta]$ is uniquely determined by the components of the fields:
$$ X_{+} (z) =  \frac{1}{\sqrt{2} } \mathcal{G} (z) \otimes Y_{tw} (e ^{\pm\beta}, z), \ h(z) =-2 \beta (z) + 2 \mathcal{T}(z). $$
%$$ X_{\pm} (z) = \mp \sqrt{2} \mathcal{G} (z) \otimes Y_{tw} (e ^{\pm\beta}, z), \ h(z) =-2 \beta (z) + 2 \mathcal{T}(z). $$
\end{theorem}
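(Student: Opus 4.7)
The plan is to adapt the vertex-algebraic strategy of \cite[Theorem 6.2]{A-2007} to the twisted setting, using the embedding supplied by Proposition \ref{description}. Since $U^{tw} \otimes F_{-1}^{T_i}$ is naturally a $(\Theta_{\cV} \otimes \Theta_{F_{-1}})$-twisted $\cV \otimes F_{-1}$-module, restricting along $V_{-2}(sl_2) \hookrightarrow \cV \otimes F_{-1}$ gives it the structure of a $\Theta$-twisted $V_{-2}(sl_2)$-module, and the explicit generating fields $X_{\pm}(z), h(z)$ are read off by applying the twisted vertex operator map to $e = \tau^{+} \otimes e^{\beta}$, $f = \tau^{-} \otimes e^{-\beta}$ and $h = -2\beta(-1) + 2j$ via formulas (\ref{for-vo1})--(\ref{for-vo2}). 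Since the critical level of $\widehat{sl_2}$ is $-2$, this produces an $\widehat{sl_2}[\Theta]$-module at the critical level.

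The main task is to prove irreducibility \emph{as a $V_{-2}(sl_2)$-module}, not merely as a $\cV \otimes F_{-1}$-module. The idea is to decouple the twisted Heisenberg factor. Because $\mathcal{T}(n+\hf)$ is central in $\cA^{tw}$, it acts by a scalar on the irreducible restricted module $U^{tw}$, so $h(n) = -2\beta(n) + 2\mathcal{T}(n)$ differs from $-2\beta(n)$ by a scalar and the principal Heisenberg subalgebra $\{h(n) : n \in \hf + \Z\}$ acts on $U^{tw} \otimes F_{-1}^{T_i}$ through the irreducible twisted Fock structure on $F_{-1}^{T_i} \cong M_{\hf + \Z}(1)$. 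The associated vacuum space $\Omega$---vectors annihilated by the positive Heisenberg modes---is therefore naturally identified with $U^{tw}$. Next, dress $X_{\pm}(z)$ with inverse twisted Heisenberg exponentials,
$$Z_{\pm}(z) := E_{tw}^{-}(\pm\beta, z)\, X_{\pm}(z)\, E_{tw}^{+}(\pm\beta, z),$$
and verify by a direct computation using (\ref{for-vo1}) that the Heisenberg exponentials built into $Y_{tw}(e^{\pm\beta}, z)$ cancel against the dressing factors, leaving $Z_{\pm}(z) = c_i\,\mathcal{G}(z) \otimes \mathrm{Id}$ for a nonzero constant $c_i$. Consequently each $Z_{\pm}(z)$ commutes with every $\beta(n)$ and its modes restrict to $\Omega$, where they realize the generators $\mathcal{G}(r)$ of $\cA^{tw}$ (with $\mathcal{S}(n)$ and the central $\mathcal{T}(n+\hf)$ recovered via anti-commutators and central characters, respectively).

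Given this setup, the irreducibility proof is short. Let $W$ be any nonzero $\widehat{sl_2}[\Theta]$-submodule of $U^{tw} \otimes F_{-1}^{T_i}$. Using the Heisenberg modes $h(n)$ with $n > 0$, one projects $W$ onto $\Omega \cong U^{tw}$; the resulting nonzero subspace is stable under the modes of $Z_{\pm}(z)$, hence under $\cA^{tw}$, and the irreducibility of $U^{tw}$ forces $\Omega \subset W$. Applying the full Heisenberg $\{h(n)\}$ to $\Omega$ then recovers all of $U^{tw} \otimes F_{-1}^{T_i}$, giving $W = U^{tw} \otimes F_{-1}^{T_i}$. The main obstacle is the dressing identity $Z_{\pm}(z) = c_i\,\mathcal{G}(z) \otimes \mathrm{Id}$, whose verification requires careful handling of half-integer modes and of the square-root branches $(-1)^i$, $(2\sqrt{-1})^{-\langle\beta,\beta\rangle}$ built into $Y_{tw}(e^{\pm\beta}, z)$, together with checking that potential cross-terms from the $\gamma^{\pm}$ components of $\tau^{\pm}$ (arising through the embedding $\cV \hookrightarrow F \otimes M(0)$) drop out on the irreducible module $U^{tw}$.
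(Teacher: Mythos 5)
Your proposal is correct in outline, but it takes a genuinely different route from the paper's proof of this theorem. The paper first observes that $M^{tw}:=U^{tw}\otimes F_{-1}^{T_i}$ is already irreducible as a $(\Theta_{\cV}\otimes\Theta_{F_{-1}})$-twisted module for the tensor vertex superalgebra $\cV\otimes F_{-1}$ (invoking the twisted analogue of Dong--Mason quantum Galois theory to get the spanning set $\{u_n^{tw}w\}$). It then uses the explicit twisted formulas (\ref{for-vo1})--(\ref{for-vo2}) to note that $Y_{tw}({\bf 1}\otimes e^{s\beta},z)$ is a \emph{pure} twisted Heisenberg exponential (no $e^{s\beta}z^{s\beta(0)}$ shift factor), and since $\beta(n)=-\tfrac12 h(n)+\mathcal{T}(n)$ lies in the modes of $W(0)\cong V_{-2}(sl_2)\otimes M_T(0)$, every $u_n^{tw}w$ already lies in $W(0)\cdot w$. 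Hence $M^{tw}$ is irreducible over $W(0)$, and the commutativity of $M_T(0)$ immediately promotes this to irreducibility over $V_{-2}(sl_2)$. Your argument instead is the $Z$-algebra/vacuum-space argument that the paper develops later, in Section 7: project any nonzero $\widehat{sl_2}[\Theta]$-submodule $W$ to the vacuum $\Omega$, show $W\cap\Omega$ is an $\cA^{tw}$-submodule via the dressed fields $Z_\pm(z)$, and rebuild from the Heisenberg. Both hinge on the same key fact --- that the twisted lattice operator is a pure Heisenberg exponential, so the dressing factors cancel exactly --- but the paper's route is shorter since it never needs to compute $Z_\pm(z)$ or to invoke the vacuum-space machinery for irreducibility, while yours gives more concrete information (and is what underlies the basis theorems).

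One point in your write-up needs a small repair. You define $\Omega$ as the set of vectors annihilated by the \emph{positive modes of $h$}, and identify it with $U^{tw}$. That identification fails whenever $\mathcal{T}(n)$ acts by a nonzero scalar for some $n>0$ (which does happen, e.g.\ for $F^{T_i}(\chi)$ with $p>0$): since $h(n)=-2\beta(n)+2\mathcal{T}(n)$ and $\mathcal{T}(n)$ is a nonzero constant, $h(n)$ is then an invertible operator on the Fock factor and its kernel is zero, so $\Omega=0$ as you defined it. The fix is to run the reduction with the operators $\beta(n)$ (equivalently, with the shifted Heisenberg): any $\widehat{sl_2}[\Theta]$-submodule $W$ is automatically $\beta(n)$-stable because $\mathcal{T}(n)$ is a scalar, and the $\beta(n)$ with $n>0$ do act locally nilpotently, so the common kernel is precisely $U^{tw}\otimes\C\mathrm{vac}$ and the projection argument goes through. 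With that adjustment your proof is sound.
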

\begin{proof}
First we notice that $ M^{tw}:= U ^{tw} \otimes F_{-1} ^{T_i}$ is an irreducible $\Theta$--twisted module for the vertex superalgebra $\mathcal{\cV} \otimes F_{-1}$. Let $Y_{M ^{tw}}(\cdot,z)$ be the associated vertex operator. For $ u \in \mathcal{\cV} \otimes F_{-1}$, let
$$ Y ^{M ^{tw}} (u, z) = \sum_{n \in {\hf} {\Z} } u ^{tw} _n z ^{-n-1}. $$

  Using the twisted version of Corollary 4.2 in \cite{DM-galois} we get that for each $w \in M^{tw}$,
$$M ^{tw} = \mathcal{\cV} \otimes F_{-1} . w =  \mbox{span}_{\C} \{ u^{tw}_ n w \ \vert \ u  \in  \mathcal{\cV} \otimes F_{-1}, \ n \in \hf {\Z}  \}. $$
By using explicit vertex operator formulas  (\ref{for-vo1})-(\ref{for-vo2}) we get that
$$Y_{M ^{tw}}({\bf 1} \otimes e ^{s \beta} ,z) w \in  W(0). w [[ z^{1/2}, z ^{-1/2}]]. $$
This implies that $$ M ^{tw} = W(0).w = \mbox{span}_{\C} \{ u^{tw}_ n w \ \vert \ u  \in  W(0), \ n \in \hf {\Z}. \}. $$
Therefore $M ^{tw}$ is an irreducible twisted $W(0)$--module. Since $W(0) \cong V_{-2}(sl_2) \otimes M_T(0)$--module, where $M_T(0)$ is a commutative subalgebra of $W(0)$ generated by $T(n)$, $n \in {\Z}$ we conclude that   $M^{tw}$ is an irreducible $\Theta$--twisted   $V_{-2}(sl_2)$--module which completes the proof.
\end{proof}

\begin{theorem}\label{e:twisted}
Assume that $\chi \in {\C} (( z ^{1/2} ))$, $i,j \in \{1,2\}$. Then
$$ F ^{T_i} (\chi) \otimes F_{-1} ^{T_j} $$
is a $\Theta$--twisted $V_{-2}(sl_2)$--module.

In particular, if ${\lambda} \in \{ 1 /2 \} \cup  ({\C} \setminus \tfrac{1}{2} {\Z})$, then  $F^{T_i} (\frac{\lambda}{z}) \otimes F^{T_j}$ is an irreducible $\widehat{sl_2}[\Theta]$--module at the critical level isomorphic to
$$ L( (  2 \lambda-2) \Lambda_0  -2 \lambda \Lambda_1 ) \quad \mbox{if} \ \quad i =j , $$
 $$ L( -2 \lambda  \Lambda_0  +  (2 \lambda -2 )  \Lambda_1 ) \quad \mbox{if} \ \quad i \ne j . $$
 Hence
 $$F^{T_i} (\frac{1}{2 z}) \otimes F^{T_j} \cong L(-\rho) =L(-\Lambda_0 - \Lambda_1) \quad \forall i, j. $$
\end{theorem}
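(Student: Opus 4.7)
The first assertion follows immediately from Theorem \ref{main-t1} once one recognizes the two factors as appropriate twisted modules: $F^{T_i}(\chi)$ is a $\Theta_{\cV}$-twisted $\cV$-module by the $\cA^{tw}$-module structure from Section \ref{ver-def} (via the field $G(z) = \partial_z \Phi(z) + \chi(z)\Phi(z)$), and $F^{T_j}_{-1}$ is a $\Theta_{F_{-1}}$-twisted $F_{-1}$-module as in Section \ref{certain}. Their tensor product is therefore a $\Theta = \Theta_{\cV} \otimes \Theta_{F_{-1}}$-twisted module for $\cV \otimes F_{-1}$, which restricts along the embedding $V_{-2}(sl_2) \hookrightarrow \cV \otimes F_{-1}$ to the claimed $\Theta$-twisted $V_{-2}(sl_2)$-module.

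For the irreducibility statement with $\lambda \in \{1/2\} \cup (\C \setminus \tfrac{1}{2}\Z)$, Example \ref{ex-1}(1) asserts that $F^{T_i}(\lambda/z)$ is already irreducible as an $\cA^{tw}$-module. Applying Theorem \ref{main-t1} with $U^{tw} = F^{T_i}(\lambda/z)$ then shows that $F^{T_i}(\lambda/z) \otimes F^{T_j}_{-1}$ is irreducible as a $\Theta$-twisted $V_{-2}(sl_2)$-module, hence as a critical-level $\widehat{sl_2}[\Theta]$-module.

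To identify the isomorphism type with $L(\mu)$, my plan is to produce a principal highest weight vector. A crucial simplification is that $\chi(z) = \lambda/z$ has purely integer-power expansion, so $\chi^{(2)}(z) = 0$ and consequently $\mathcal{T}(z) \equiv 0$ on the twisted module. Theorem \ref{main-t1} then gives $h(n) = -2\beta(n)$ for every $n \in \tfrac{1}{2}+\Z$, so the joint kernel of $\{h(n) : n > 0\}$ is exactly $F^{T_i}(\lambda/z) \otimes \C\, {\bf 1}$. Within this subspace one looks for $v = u \otimes {\bf 1}$ annihilated by the positive integer modes $x^+(n)$ and by the positive half-integer modes $x^-(p)$; the eigenvalue $j_{\mathrm{prin}}$ of $x^+(0)$ on $v$ then determines $\mu = -(1+j_{\mathrm{prin}})\Lambda_0 + (j_{\mathrm{prin}}-1)\Lambda_1$ at the critical level.

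The main obstacle is the explicit construction of this highest weight vector. The approach is to use
$$ X_+(z)(u \otimes {\bf 1}) = \tfrac{1}{\sqrt{2}}\, \mathcal{G}(z) u \otimes Y_{tw}(e^{\beta}, z){\bf 1}, $$
combined with $\mathcal{G}(z) = \partial_z \Phi(z) + (\lambda/z) \Phi(z)$ and the expansion of $Y_{tw}(e^\beta, z){\bf 1}$ via (\ref{for-vo1})--(\ref{for-vo2}), to translate the annihilation conditions into a recursive linear system for the coefficients of an ansatz $u = \sum a_{m_1,\ldots,m_r}\, \Phi(-m_1)\cdots\Phi(-m_r){\bf 1}$. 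The key combinatorial input is that the scalars $\Phi(0)|_{F^{T_i}} = (-1)^i/\sqrt{-2}$ and $(-1)^j$ appearing in the twisted lattice operator combine into $(-1)^{i+j}$, which flips the sign of $j_{\mathrm{prin}}$ and produces the two cases $i = j$ versus $i \neq j$. A useful consistency check is the Kac--Kazhdan formula recalled in the introduction: both $F^{T_i}(\lambda/z) \otimes F^{T_j}_{-1}$ and $L(\mu)$ have character $e^\mu \prod_{n \geq 1}(1 - q^{n-1/2})^{-2}$. For $\lambda = 1/2$ both formulas for $\mu$ collapse to $-\Lambda_0 - \Lambda_1 = -\rho$, which is why the dependence on $i, j$ disappears in that case.
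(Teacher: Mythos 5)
The argument for the first assertion is fine, and in fact does not require irreducibility: $F^{T_i}(\chi)$ is a $\Theta_{\cV}$-twisted $\cV$-module, $F_{-1}^{T_j}$ is a $\Theta_{F_{-1}}$-twisted $F_{-1}$-module, their tensor product is a $\Theta$-twisted $\cV\otimes F_{-1}$-module, and one restricts along $V_{-2}(sl_2)\hookrightarrow\cV\otimes F_{-1}$.  The irreducibility claim for $\lambda\in\{1/2\}\cup(\C\setminus\tfrac12\Z)$ is correctly obtained from Example \ref{ex-1}(1) together with Theorem \ref{main-t1}.  So far so good.

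The identification of the isomorphism class, which is the real content of the theorem, is only a plan and not a proof, and you acknowledge this yourself ("the main obstacle is the explicit construction of this highest weight vector").  Two remarks on your plan.  First, the ansatz $u=\sum a_{m_1,\dots,m_r}\Phi(-m_1)\cdots\Phi(-m_r){\bf 1}$ and the "recursive linear system" are unnecessary: as the paper notes in Section \ref{sl2-section}, for $\chi(z)=\lambda/z$ the $\widehat{sl_2}[\Theta]$-action is compatible with the $L^{tw}(0)$-gradation, the lowest $L^{tw}(0)$-graded piece of $F^{T_i}(\lambda/z)\otimes F_{-1}^{T_j}$ is one-dimensional and spanned by ${\bf 1}\otimes{\bf 1}$, and all positive modes $x^+(n)$, $x^-(p)$, $h(m)$ strictly lower the degree, so they must annihilate ${\bf 1}\otimes{\bf 1}$.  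Hence ${\bf 1}\otimes{\bf 1}$ is automatically the (unique up to scalar) highest weight vector, and no ansatz solving is needed.  Second, the piece that genuinely determines the theorem — the computation of the eigenvalue $x^+(0)({\bf 1}\otimes{\bf 1})=j_{\mathrm{prin}}({\bf 1}\otimes{\bf 1})$ — is never carried out.  You state, without verification, that the constants $\Phi(0)|_{F^{T_i}}=(-1)^i/\sqrt{-2}$ and the $(-1)^j$ in (\ref{for-vo1}) "combine into $(-1)^{i+j}$, which flips the sign of $j_{\mathrm{prin}}$," but this requires assembling the normalization $\frac{1}{\sqrt2}$ in the formula for $X_+(z)$, the expansion of $G(z){\bf 1}$ via $G(r)=(\lambda-r-\tfrac12)\Phi(r)$, and the leading behavior of $E_{tw}^\pm(\mp\beta,z){\bf 1}$ (including the $z^{1/2}$ normalization of the twisted lattice operator), and then checking that the result is $\pm(1-2\lambda)$.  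Until this computation is actually done, the weights $\mu$ claimed in the theorem are not established, and the Kac–Kazhdan character comparison you offer cannot substitute, since the character $\prod(1-q^{n-1/2})^{-2}$ is the same for every $\mu$ at the critical level and so cannot distinguish the two cases $i=j$ and $i\neq j$, nor the dependence on $\lambda$.
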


By using the boson-fermion correspondence we have that $F^{T_i}$ can be realized on $M_{\hf + \Z } (1)$. Therefore, we get the following result.

\begin{corollary} For every $\chi \in {\C} ((z ^{1/2}))$ we have the   $\widehat{sl_2}[\Theta]$--module structure on the vector space
$$M_{\hf + \Z } (1) \otimes M_{\hf + \Z } (1). $$
\end{corollary}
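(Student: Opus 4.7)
The plan is to transport the twisted module structure from Theorem \ref{e:twisted} through the boson-fermion identifications already recorded in the paper, so this really is just a bookkeeping corollary.

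First I would recall the two identifications of vector spaces at play. By the boson-fermion correspondence (noted in the Remark at the end of Section \ref{certain}), the irreducible $\Theta_F$-twisted module $F^{T_i}$ is realized on the Heisenberg module $M_{\tfrac{1}{2}+\Z}(1)$. Similarly, the paper states that $F_{-1}^{T_j}$ is realized on the same space $M_{\tfrac{1}{2}+\Z}(1)$ via the formulas (\ref{for-vo1})-(\ref{for-vo2}). Next, since $M^{tw}(0,\chi)$ is one-dimensional, the definition $F^{T_i}(\chi) = F^{T_i} \otimes M^{tw}(0,\chi)$ together with (\ref{identification-prva}) gives the vector-space identification
\begin{equation*}
F^{T_i}(\chi) \otimes F_{-1}^{T_j} \;\cong\; M_{\tfrac{1}{2}+\Z}(1) \otimes M_{\tfrac{1}{2}+\Z}(1).
\end{equation*}

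Then I would invoke Theorem \ref{e:twisted}, which endows $F^{T_i}(\chi) \otimes F_{-1}^{T_j}$ with the structure of a $\Theta$-twisted $V_{-2}(sl_2)$-module. Since $V_{-2}(sl_2)$ is at the critical level, and the category of $\Theta$-twisted $V_{-2}(sl_2)$-modules coincides (by the Proposition preceding Theorem \ref{main-t1}) with the category of restricted $\widehat{sl_2}[\Theta]$-modules at level $-2$, this module structure transports through the identification above to give the asserted $\widehat{sl_2}[\Theta]$-module structure on $M_{\tfrac{1}{2}+\Z}(1)^{\otimes 2}$.

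There is no real obstacle to overcome here; the only care needed is to verify that the two boson-fermion realizations use the same concrete Heisenberg module (so that the tensor product indeed lands in $M_{\tfrac{1}{2}+\Z}(1)^{\otimes 2}$ rather than in two possibly distinct isomorphic copies). This is immediate since both twisted modules are, by construction in Section \ref{certain}, the unique irreducible $\hat{\mathfrak h}_{\tfrac{1}{2}+\Z}$-module of level one. The explicit action of $\widehat{sl_2}[\Theta]$ on this tensor product can then be read off from the formulas in Theorem \ref{main-t1}, after substituting the bosonic expressions for $\mathcal{G}(z)$ (coming from (\ref{twisted-field-3})) and for $Y_{tw}(e^{\pm\beta},z)$ (from (\ref{for-vo1})-(\ref{for-vo2})).
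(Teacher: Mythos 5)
Your proposal is correct and matches the paper's own reasoning, which is exactly the one-line observation that the boson-fermion correspondence realizes $F^{T_i}$ on $M_{\tfrac{1}{2}+\Z}(1)$, that $F_{-1}^{T_j}$ is likewise realized on $M_{\tfrac{1}{2}+\Z}(1)$, and that Theorem \ref{e:twisted} then hands over the $\widehat{sl_2}[\Theta]$-module structure. You simply spell out the identification $F^{T_i}(\chi)\otimes F_{-1}^{T_j}\cong M_{\tfrac{1}{2}+\Z}(1)^{\otimes 2}$ in a bit more detail than the paper does.
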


Assume now that $\lambda \in \tfrac{1}{2} \Z$, $\lambda < 1/2$. Then the action of ${\cA} ^{tw}$ on $F^{T_i} ( \lambda /z)$ is uniquely determined by
$$ G ({n+ 1/2}) = - (n+1- \lambda)  \Phi (n+1/2), \quad (n \in \tfrac{1}{2} {\Z} ). $$ Recall  that $F^{T_i} ( \lambda /z)$ has an irreducible submodule:
$$ \overline{F} ^{T_i} (\lambda / z) = \mbox{Ker}_{ \overline{F} ^{T_i}  }  \Phi (1/2- \lambda )   \cong \bigwedge
\left(   \Phi(n)  \ \vert \ n \in \tfrac{1}{2} {\Z}, n < 0,  n \ne  \lambda -1/2 \right). $$

\begin{theorem} Assume that  $\lambda \in \tfrac{1}{2} \Z$, $\lambda < 1/2$.
Then
$$ \overline{F} ^{T_i} (\lambda /z)  \otimes F^{T_j} = L(( 2 \lambda-2) \Lambda_0 - 2 \lambda \Lambda_1)  \quad \mbox{if} \quad i=j , $$
$$ \overline{F} ^{T_i} (\lambda / z) \otimes F^{T_j} = L(-2 \lambda \Lambda_0 + ( 2 \lambda -2 ) \Lambda_1) \quad \mbox{if} \quad i\ne j . $$

\end{theorem}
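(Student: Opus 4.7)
The plan is to combine the irreducibility machinery of Theorem \ref{main-t1} with an explicit highest weight computation parallel to the generic case of Theorem \ref{e:twisted}. Example \ref{ex-1}(2) shows that $\overline{F}^{T_i}(\lambda/z)$ is an irreducible ${\cA}^{tw}$--module, equivalently an irreducible $\Theta_{\cV}$--twisted ${\cV}$--module. Applying Theorem \ref{main-t1} with $U^{tw} = \overline{F}^{T_i}(\lambda/z)$, the tensor product $M := \overline{F}^{T_i}(\lambda/z) \otimes F^{T_j}$ is an irreducible $\Theta$--twisted $V_{-2}(sl_2)$--module, equivalently an irreducible $\widehat{sl_2}[\Theta]$--module at the critical level. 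It then remains to identify $M$ as a highest weight module.

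Next, I would exhibit a highest weight vector. Take $v = {\bf 1} \otimes {\bf 1}$, where the first factor is the vacuum of $F^{T_i}$ and the second the vacuum in the realization of $F^{T_j}$. Since $\lambda < 1/2$ forces $1/2 - \lambda > 0$, the operator $\Phi(1/2 - \lambda)$ annihilates ${\bf 1}$, so $v$ lies in the submodule $\overline{F}^{T_i}(\lambda/z) \otimes F^{T_j} = M$. Using the formulas of Theorem \ref{main-t1}, the strictly positive modes of $h(z) = -2 \beta(z) + 2 \mathcal{T}(z)$ annihilate $v$: the Heisenberg vacuum handles $\beta(m)$, while $\mathcal{T}(m) \equiv 0$ on $\overline{F}^{T_i}(\lambda/z)$ since $\chi = \lambda/z$ has vanishing half--integer part $\chi^{(2)}$. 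Analogously, the positive modes of $X_{\pm}(z) \propto \mathcal{G}(z) \otimes Y_{tw}(e^{\pm \beta}, z)$ annihilate $v$: the positive modes of $\mathcal{G}(z) = \partial_z \Phi(z) + (\lambda/z) \Phi(z)$ kill ${\bf 1}$, and the exponentials $E^{\pm}_{tw}(\mp \beta, z)$ from (\ref{for-vo2}) contribute only strictly negative $z^{1/2}$--powers on the Heisenberg vacuum.

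Finally, to determine the weight, one computes $x^+(0) v$ as the coefficient of $z^{-1}$ in the integer--power part of $X_+(z) v$. From $\mathcal{G}(0) = (\lambda - 1/2) \Phi(0)$ together with $\Phi(0) = (-1)^i / \sqrt{-2}$, and the $(-1)^j$ factor of $Y_{tw}(e^{\pm \beta}, z)$ coming from (\ref{for-vo1}), one obtains $x^+(0) v = (1 - 2\lambda) v$ when $i = j$ and $x^+(0) v = (2\lambda - 1) v$ when $i \neq j$. Under the parametrization $L(\tfrac{k}{2}(\Lambda_0 + \Lambda_1) + j (\Lambda_1 - \Lambda_0))$ with $k = -2$, these eigenvalues match the highest weights $(2\lambda - 2)\Lambda_0 - 2\lambda \Lambda_1$ and $-2 \lambda \Lambda_0 + (2\lambda - 2) \Lambda_1$ respectively. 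Since $M$ is irreducible and generated by a highest weight vector of the claimed weight, the isomorphisms follow.

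The main obstacle will be the sign bookkeeping producing the dichotomy between $i = j$ and $i \neq j$: the Clifford zero mode $\Phi(0) = (-1)^i/\sqrt{-2}$ contributes a $(-1)^i$, while the twisted lattice vertex operator contributes $(-1)^j$, and their combined sign $(-1)^{i+j}$ flips the sign of the eigenvalue of $x^+(0)$. Modulo this careful accounting of signs, the verification is essentially routine and parallels the treatment of the generic case in Theorem \ref{e:twisted}.
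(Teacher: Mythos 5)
The overall strategy — use Example \ref{ex-1}(2) for irreducibility of the $\mathcal{A}^{tw}$--module, Theorem \ref{main-t1} for irreducibility of the tensor product, then identify $\mathbf{1}\otimes\mathbf{1}$ as the highest weight vector and read off the weight from $x^+(0)$ — is the right one and is consistent with what the paper does implicitly (the paper states this theorem without proof, as a direct companion to Theorem \ref{e:twisted}). However, the verification of the highest-weight conditions, which is the entire substance of the argument, is flawed as written.

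First, a factual error: $E^-_{tw}(-\beta,z)$ applied to the Heisenberg vacuum produces a series in \emph{non-negative} half-integer powers of $z$ (the exponent in (\ref{for-vo2}) involves $z^{+k}$, $k\in\Zp+\tfrac12$), not "strictly negative $z^{1/2}$-powers." Second, and more seriously, your sketch never confronts the one mode that is actually in danger of failing to annihilate $v=\mathbf{1}\otimes\mathbf{1}$, namely $x^-(\tfrac12)$. For $\lambda<\tfrac12$ one has $G(0)\mathbf{1}=(\lambda-\tfrac12)\Phi(0)\mathbf{1}=(\lambda-\tfrac12)(-1)^i\tfrac{1}{\sqrt{-2}}\mathbf{1}\ne 0$, so $\mathcal{G}(z)\mathbf{1}$ has a nontrivial $z^{-3/2}$ term. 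If one reads off the $z$-expansion from (\ref{for-vo1})--(\ref{for-vo2}) literally, the $z^{-3/2}$ coefficient of $X_+(z)v$ is the nonzero scalar $\tfrac{1}{\sqrt2}G(0)\mathbf{1}\otimes c\mathbf{1}$, which would force $x^-(\tfrac12)v\ne 0$, contradicting the highest-weight condition. The vanishing only comes out correctly once one includes the twisted-sector exponent shift (the $z^{-\langle\beta,\beta\rangle/2}=z^{1/2}$ power attached to $Y_{tw}(e^{\pm\beta},z)$, which is suppressed in the way (\ref{for-vo1}) is written but is essential to the degree bookkeeping in $M_{\tfrac12+\Z}(1)$). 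With that shift the leading term of $X_\pm(z)v$ sits at $z^{-1}$, all positive modes $x^\pm$ and $h$ annihilate $v$, and the $z^{-1}$ coefficient yields precisely $x^+(0)v=(-1)^{i+j}(1-2\lambda)v$, matching the claimed weights. So the conclusion and the dichotomy on $(-1)^{i+j}$ are correct, but the "routine verification" you invoke is precisely where the nontrivial content lives, and the supporting claims you give for it are incorrect as stated.
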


\subsection{Example:}
In the case of untwisted representations,  $\chi = 0$ gave  the realization of the vacuum module $L(-2 \Lambda_0)$  \cite{A-2007}.
Now we shall consider the twisted case when  $\chi = 0$. Let us consider $F^{T_i} (0) = F ^{T_i} $ as a module over Lie superalgebra ${\cA} ^{tw}$. Then the action of the generators of ${\cA} ^{tw}$ is given by
$$ G (n+1/2) = - (n+1)  \Phi (n+1/2), \quad (n \in \tfrac{1}{2} {\Z} ), $$
and other generators act trivially. So, $F ^{T_i} $ has a simple ${\cA} ^{tw}$--submodule:
$$ \overline{F} ^{T_i} = \mbox{Ker}_{ \overline{F} ^{T_i}  }  \Phi (1/2)   \cong \bigwedge \left(   \Phi(n)  \ \vert \ n \in \tfrac{1}{2} {\Z}, n < 0,  n \ne -1/2 \right). $$

We have:
$$ \overline{F} ^{T_i} \otimes F^{T_j} = L(-2 \Lambda_0) \quad \mbox{if} \quad i=j , $$
$$ \overline{F} ^{T_i} \otimes F^{T_j} = L(-2 \Lambda_1) \quad \mbox{if} \quad i\ne j . $$

\subsection{Characters for highest weight modules at the critical level}

In the previous section we presented an explicit realization of all irreducible highest weight $\widehat{sl_2}[\Theta]$--modules (in the principal picture).
Let us now determine its $q$--characters.

By construction, ${F} ^{T_i} (\lambda /z)   \otimes F^{T_j} $ are realized on the  $\Theta_{F} \otimes \Theta_{F_{-1}}$--twisted module  ${F} ^{T_i} \otimes F^{T_j}$. Recall that $F \otimes F_{-1}$ has the following Virasoro vector (cf. \cite{A-2007}):
$$\omega = \omega ^{(f)} - 1/2 \beta(-1) ^{2}, \quad \omega^{(f)} = \frac{1}{2} (\Psi ^{+} (-3/2) \Psi ^{-} (-1/2) + \Psi^{-}(-3/2) \Psi^{+}(-1) ) {\bf 1}. $$
Let $ L^{tw} (z) = Y_{tw} (\omega, z) = \sum_{n \in {\Z} } L^{tw} (n) z ^{-n-1}$. The operators $L^{tw}(n), n \in {\Z} $ define on ${F} ^{T_i} \otimes F^{T_j}$ the representation of the Virasoro algebra with central charge $2$.  One can see that $\widehat{sl_2}[\Theta]$ action on $F^{T_i} (\chi) \otimes F_{-1} ^{T_j}$ is compatible with the $L ^{tw} (0)$--gradation if and only if $\chi (z) = \lambda /z$. Then  $F^{T_i} (\chi) \otimes F_{-1} ^{T_j}$ becomes a $\widehat{sl_2}[\Theta] \oplus L^{tw}(0)$--module.

We have the following $q$--character:
$$\mbox{ch}_ { {F} ^{T_i} (\lambda /z)   \otimes F^{T_j}} (q) =tr q^{L ^{tw} (0)} \sim  \prod_{n=1 } ^{\infty} (1 - q^{n-1/2}) ^{-2},$$

which is in complete agreement with the Kac-Kazhdan character formula.

%Let us derive $q$-character from the Kac-Kazhdan formula.
%
%Assume that $t \in \{-1 \} \cup {\C} \setminus {\Z}$. Then irreducible $\widehat{sl_2}$--module $L(\mu)$, $\mu = -(2+ t) \Lambda_0 + t \Lambda_1)$ has the following character formula:
%$$ \mbox{ch}_{L(\mu)} = e ^{\mu} \prod_{ n=1 } ^{\infty} (1 -e ^{ \alpha - n \delta} )^{-1} (1 - e ^{-\alpha - (n-1) \delta} ) ^{-1}. $$
%As usual, we set $ e ^{\alpha} = z^2 $, $e ^{-\delta} = q$, so we get
%$$ \mbox{ch}_{L(\mu)} = e ^{\mu} \prod_{ n=1 } ^{\infty} (1 -q ^n z ^{2})^{-1} (1 - q ^{n-1} z ^{-2}) ^{-1}. $$
%Put $ z ^{2} = q^{-1/2}$. We get $q$--character formula
%
%$$ \mbox{ch}_{L(\mu)} = e ^{\mu} \prod_{ n=1 } ^{\infty} (1 -q ^{n-1/2} )^{-2} . $$

\section{Relations with $Z$--algebras}

In this section we shall study connection with the theory of $Z$--algebras and extend the results from \cite{DJM}. As a consequence we shall construct combinatorial  bases of  parafermion type for  a family of irreducible modules at the critical level.

Let $V$ be any quotient of the vertex algebra $\cV$.
Consider now the vacuum subspace of $V$:
$$ \Omega (V) = \{ v \in V_{-2}( sl_2) \ \vert \ h(n) v = 0 \ n \ge 1 \}. $$
It was proved in \cite{GL} that there is a unique generalized vertex algebra structure $Y_{\Omega}$  on $\Omega ( V)$ generated by the fields

$$ A^+ (z)= Y_{\Omega} (e(-1) {\1}, z) = E^{-} (-\tfrac{1}{2}  h,z  ) Y( e (-1) {\1}, z ) E ^{+} ( -\tfrac{1}{2} h, z ) z ^{\tfrac{1}{2} h(0)}, $$
$$ A^- (z) =Y_{\Omega} (f(-1) {\1}, z) = E^{-} (\tfrac{1}{2}  h, z ) Y( f (-1) {\1}, z ) E ^{+} (  \tfrac{1}{2} h, z ) z ^{-\tfrac{1}{2} h(0)}. $$
where as usual we set for $\alpha \in {\C} h $
$$E ^{\pm} ( \alpha, z) = \exp \left( \sum_{ \pm n \in {\N} } \frac{\alpha (n) }{n} z ^{-n} \right).  $$

\begin{theorem} \label{ident-vacuum} We have:
\item[(1)] $ \Omega (V_{-2} (sl_2) ) \cong \widetilde{\mathcal{V}}$
\item[(2)] $ \Omega(L_{-2} (sl_2) ) \cong \overline{F}$.
\end{theorem}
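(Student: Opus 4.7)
My plan to prove Theorem \ref{ident-vacuum} is to use the structural identification from Proposition \ref{description} together with a direct computation showing that the generating fields $A^{\pm}(z)$ of the vacuum space correspond to the generating fields $Y_{\widetilde{\cV}}(\widetilde{\tau}^{\pm},z)$ of $\widetilde{\cV}$.

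First I would set up the picture. By Proposition \ref{description} we have $V_{-2}(sl_2) \cong \bigoplus_{i\in\Z}\widetilde{\cV}^{(i)}\otimes F_{-1}^{(i)}$ as vector spaces. Since the ideal $\widetilde I$ is generated by $j$ and $T(n)$ is commutative and central, one checks that $T(n)\equiv 0$ on $\widetilde{\cV}$ for every $n\in\Z$. Because the embedding sends $h/2 = -\beta(-1) + j$, this implies $h(n) = -2\beta(n)$ as operators on $\widetilde{\cV}\otimes F_{-1}$.

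For part (1), I would compute the vacuum space. For $v=\sum_i v_i\otimes u_i$ with $v_i\in\widetilde{\cV}^{(i)}$ and $u_i\in F_{-1}^{(i)}$, the condition $h(n)v=0$ for $n\geq 1$ is equivalent to $\beta(n)u_i=0$ for every $i$ and $n\geq 1$. The $\beta$-Heisenberg vacuum space of $F_{-1}$ is $\C[L]=\bigoplus_i \C\,e^{i\beta}$, and $\C e^{i\beta}=F_{-1}^{(i)}\cap \C[L]$. Therefore
$$\Omega(V_{-2}(sl_2)) \;=\;\bigoplus_{i\in\Z}\widetilde{\cV}^{(i)}\otimes \C\, e^{i\beta},$$
and the map $\Phi: v_i\otimes e^{i\beta}\mapsto v_i$ is a vector space isomorphism onto $\widetilde{\cV}$.

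Next I would promote $\Phi$ to an isomorphism of generalized vertex algebras by verifying that the generating fields match. Using $e=\tau^{+}\otimes e^{\beta}$ and $-h/2 = \beta(-1)-j$ together with $T(n)=0$ on $\widetilde{\cV}$, we have $E^{\pm}(-h/2,z)=E^{\pm}(\beta,z)$ on $\widetilde{\cV}\otimes F_{-1}$. Since $\beta(n)e^{i\beta}=0$ for $n>0$ and $\beta(0)e^{i\beta}=-i\,e^{i\beta}$, applying the definition of $A^{+}(z)$ to $v_i\otimes e^{i\beta}$ and using the explicit formula $Y(e^{\beta},z)e^{i\beta} = z^{-i}E^{-}(-\beta,z)\,e^{(i+1)\beta}$, the Heisenberg exponential factors $E^{\pm}(\beta,z)$ cancel against $E^{\pm}(-\beta,z)$ and the zero mode $z^{h(0)/2}$ cancels $z^{\beta(0)}$, yielding
$$A^{+}(z)\bigl(v_i\otimes e^{i\beta}\bigr)\;=\;\bigl[Y_{\widetilde{\cV}}(\widetilde{\tau}^{+},z)v_i\bigr]\otimes e^{(i+1)\beta}.$$
The analogous identity holds for $A^{-}(z)$ and $\widetilde{\tau}^{-}$. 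Thus $\Phi$ intertwines $A^{\pm}(z)$ with $Y_{\widetilde{\cV}}(\widetilde{\tau}^{\pm},z)$, and since these fields generate $\Omega(V_{-2}(sl_2))$ and $\widetilde{\cV}$ respectively, $\Phi$ is an isomorphism of generalized vertex algebras.

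For part (2), I would invoke the results of \cite{A-2007} that give the analogous realization of $L_{-2}(sl_2)$ inside $\overline{F}\otimes F_{-1}$, yielding $L_{-2}(sl_2)\cong\bigoplus_{i\in\Z}\overline{F}^{(i)}\otimes F_{-1}^{(i)}$; the same two steps (vacuum space identification and comparison of $A^{\pm}(z)$ with the image of $\widetilde{\tau}^{\pm}$ in $\overline{F}$) then give $\Omega(L_{-2}(sl_2))\cong\overline{F}$. The main obstacle is the field-level bookkeeping in the third paragraph: one must carefully track the interaction of the fractional zero-mode operator $z^{h(0)/2}$ with the cocycle appearing in $Y(e^{\beta},z)$ and the Heisenberg exponentials, and the whole argument hinges on the vanishing of all modes $T(n)$ in $\widetilde{\cV}$, which is what decouples the $\mathcal V$-Heisenberg part from the $F_{-1}$-Heisenberg part in the passage to the quotient.
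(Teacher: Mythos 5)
Your proposal is correct and follows essentially the same route as the paper: identify $\Omega(V_{-2}(sl_2))$ with $\bigoplus_{i}\widetilde{\cV}^{(i)}\otimes \mathbb{C}e^{i\beta}$ via Proposition \ref{description} (and the vanishing of $T(n)$ on $\widetilde{\cV}$, which turns $-h/2$ into $\beta$), and then check that the exponential factors $E^{\pm}(\beta,z)$, $E^{\pm}(-\beta,z)$ together with $z^{\pm\beta(0)}$ cancel so that $A^{\pm}(z)$ reduces to $G^{\pm}(z)$; part (2) is the same argument using the realization of $L_{-2}(sl_2)$ from \cite{A-2007}. Your write-up is somewhat more explicit than the paper's (you track the $e^{(i+1)\beta}$ shift and state the intertwining property of $\Phi$), but the decomposition, the key cancellation, and the appeal to \cite{GL} that $A^{\pm}(z)$ generate the vacuum algebra are all the same steps.
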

\begin{proof}
By explicit realization and description of $V_{-2}(sl_2)$ from Proposition \ref{description} we easily see that
 $$ \Omega (V_{-2} (sl_2) ) \cong \bigoplus_{i \in \Z} \widetilde{\mathcal{V}} ^{(i)} \otimes e ^{i \beta} $$
 which implies that
 $ \Omega (V_{-2} (sl_2) ) \cong \widetilde{\mathcal{V}}$ as vector spaces.
By \cite{GL} we know that $ \Omega (V_{-2} (sl_2) ) $ is generated by
$A^+(z)$ and $A^-(z)$. We have:
$$ A^+ (z) =  G^+ (z)  E^{-} (  \beta, z )  E ^- (- \beta, z )   E ^+ ( \beta, z) E ^{+} ( -\beta, z  ) z ^{ \beta (0) } z ^{-\beta (0) } = G ^+ (z) $$
and similarly $A ^- (z) = G  ^- (z)$. This proves assertion (1). The proof of assertion (2)
uses similar arguments and description of the simple vertex algebra $L_{-2} (sl_2)$  from
Corollary 8.1. of \cite{A-2007}.
\end{proof}

\begin{remark}
In \cite{GL}, the authors defined  certain  category $\mathcal{C}_k$ whose objects  are $V_k(sl_2)$--modules $W$ which are weight $\widehat{{\frak h}}$--modules such that  $\widehat {\frak h}^+$ acts locally nilpotently. They also showed  that for every (irreducible) $ \Omega (V_k(sl_2) ) $--module $U$ on the vector space $U \otimes M_{\Z}(1)$ there is a structure of (irreducible) $V_k(sl_2)$--module from the category  $\mathcal{C}_k$ (for details see \cite{GL}; see also \cite{DL}).

% In the case of critical level, by using same  methods as in the proof of Theorem \ref{ident-vacuum}, one can show that using parafermionic construction the modules $\mathcal{L}_s (U)$
% from \cite{A-2007} also have  the form  $U \otimes M_{\Z}(1)$.

We should also say that there exists interesting $V_k(sl_2)$--modules which don't belong to the category $\mathcal{C}_k$ and which are not of the form $U \otimes M_{\Z}(1)$. In particular, Whittaker modules  can not be obtained using the above methods.
\end{remark}

%We believe that their construction can be extended in twisted picture.
Let $\mathcal{C}_k ^{tw} $ be category of $\Theta$--twisted $V_{k}(sl_2)$--modules $W$  on which $\hat{\h}^+ _{\Z + \frac{1}{2} }$ acts   locally nilpotently.

The generalized vertex algebra $ \Omega(V_k(sl_2))$ admits a canonical automorphism of order two induced from the automorphism $\Theta$ of $V_k(sl_2)$ (for discussion on automorphisms of $ \Omega(V_k(sl_2))$ see \cite{Li-abelian}). Then by using analogous proof to that of \cite{GL} and \cite{Li-abelian} one can prove that for every $\Theta$--twisted $ \Omega(V_k(sl_2))$--module $U ^{tw}$ on the vector space $U ^{tw} \otimes M_{\tfrac{1}{2} + {\Z} } (1)$ there is a structure of  (irreducible) $\Theta$--twisted $V_k(sl_2)$--modules. In particular, $U ^{tw} \otimes M_{\tfrac{1}{2} + {\Z} } (1)$ is a $\widehat{sl_2}[\Theta]$--module. In the case of critical level, since  $\Omega (V_{-2} (sl_2) ) \cong \widetilde{\cV}$, our construction proves that  $U ^{tw} \otimes M_{\tfrac{1}{2} + {\Z} } (1)$ can be equipped with the structure  of an  (irreducible) $\Theta$--twisted $V_k(sl_2)$--module in the category $\mathcal{C}_k ^{tw} $ for $k=-2$.  On the other hand for every $\Theta$--twisted $V_k(sl_2)$--module $W$ from the category $\mathcal{C}_k ^{tw} $, let
$$\Omega(W) = \{ v \in W \ \vert  \ h (n) v = 0 , \quad n \in \tfrac{1}{2} + {\Z}_+ \}. $$

\begin{theorem} Assume that $(W,Y_W ^{tw})$ is any $\Theta$--twisted $V_{-2}(sl_2)$-module from the category $\mathcal{C}_{-2} ^{tw}$. Then $( \Omega(W), Y_{\Omega} ^{tw})$ has the structure of a 
$\Theta_{\widetilde {\cV}}$--twisted  $\widetilde{\mathcal{{\cV} }}$--module uniquely determined by the vertex operators
 \begin{align*}
 A^+ (z) &= Y_{\Omega} ^{tw} (e(-1) {\bf 1}, z) = E^{-} _{tw} (-\frac{h}2, z)  Y ^{tw} (e(-1){\bf 1},z)  E^{+} _{tw} (-\frac{h}2, z),\\
A^- (z) &= Y_{\Omega} ^{tw} (f(-1) {\bf 1}, z)  = E^{-} _{tw} (\frac{h}2, z)  Y ^{tw} (f(-1){\bf 1},z)  E^{+} _{tw} (\frac{h}2, z).
\end{align*}
Assume that  $W$ is isomorphic to a module $ U^{tw} \otimes F_{-1}^{T_j}$ constructed from the irreducible ${\cA}^{tw}$--module $U^{tw}$  such that for $n >0$  $\mathcal T  (n) = 0$ on $ U^{tw}$ (see Section \ref{constr-2}). Then $\Omega (W) \cong U^{tw}$ as a vector space and the action of $A^{\pm}(z)$ is as follows:
$$ A^+(z) = G^+(z) E_{tw} ^-  (-j,z )= \sum_{n \in \tfrac{1}{2} \Z} \widetilde{G}(n) z ^{-n-1},$$
and $A^-(z)|_{z^{1/2}\to -z^{1/2}}=A^+(z)$.
\end{theorem}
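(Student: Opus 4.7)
The plan is to build on the $Z$-algebra formalism of Theorem \ref{ident-vacuum}: we already know $\Omega(V_{-2}(sl_2)) \cong \widetilde{\cV}$ in the untwisted case, and here we need the twisted analogue for a general $\Theta$-twisted $V_{-2}(sl_2)$-module $W \in \mathcal{C}_{-2}^{tw}$. I would split the argument into an abstract stage establishing the twisted module structure on $\Omega(W)$, and a concrete computation identifying $A^{\pm}(z)$ when $W = U^{tw} \otimes F_{-1}^{T_j}$.

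For the abstract stage, two points need verification. First, the operators $A^{\pm}(z)$ preserve $\Omega(W)$: this reduces to $[h(m), A^{\pm}(z)] = 0$ for $m \in \tfrac{1}{2} + \Z$, $m > 0$, a direct computation in the twisted Heisenberg algebra $\hh_{\tfrac{1}{2}+\Z}$ showing that the exponential conjugation in the definition of $A^{\pm}(z)$ is designed precisely to cancel the anomalous term produced when one commutes $h(m)$ past $Y^{tw}(e, z)$ and $Y^{tw}(f, z)$. Second, the twisted generalized vertex algebra axioms for the action of $\widetilde{\cV}$ on $\Omega(W)$ must be checked; here I would invoke the twisted version of the $Z$-algebra construction outlined in the Remark preceding the theorem (an adaptation of \cite{GL} and \cite{Li-abelian} to $\Theta$-twisted modules), together with the identification $\Omega(V_{-2}(sl_2)) \cong \widetilde{\cV}$ from Theorem \ref{ident-vacuum}(1).

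For the concrete computation, first identify $\Omega(W)$. By Theorem \ref{main-t1} we have $h(z) = -2\beta(z) + 2\mathcal{T}(z)$, and the hypothesis $\mathcal{T}(n) = 0$ on $U^{tw}$ for $n > 0$ gives $h(n) = -2\beta(n)$ on $W$ for $n > 0$; since the $\beta$-vacuum subspace of $F_{-1}^{T_j} \cong M_{\tfrac{1}{2}+\Z}(1)$ is one-dimensional (spanned by its highest weight vector), we obtain $\Omega(W) \cong U^{tw}$. To evaluate $A^+(z)$, I use the embedding $e = \tau^+ \otimes e^{\beta}$, so that $Y^{tw}(e, z) = \tfrac{1}{\sqrt{2}}\, G(z) \cdot Y_{tw}(e^{\beta}, z)$, and factor the exponentials as $E^{\pm}_{tw}(-h/2, z) = E^{\pm}_{tw}(\beta, z) \cdot E^{\pm}_{tw}(-j, z)$, where the first factor involves only the $F_{-1}^{T_j}$ tensor slot and the second involves only the $\mathcal{T}$-modes on $U^{tw}$. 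The crucial cancellations are: $E^{\pm}_{tw}(\beta, z)$ commutes with $G(z)$ and with $E^{\pm}_{tw}(-j, z)$ (distinct tensor factors) and, using $E^-_{tw}(\beta,z)\, E^-_{tw}(-\beta,z) = 1$ together with the analogous identity for $E^+_{tw}$, annihilates the $E^{\pm}_{tw}(-\beta, z)$ factors inside $Y_{tw}(e^{\beta}, z)$; meanwhile $E^+_{tw}(-j, z)$ acts as the identity on $\Omega(W)$ by the hypothesis. What survives is $A^+(z) = (\text{scalar})\cdot G(z) \cdot E^-_{tw}(-j, z)$, matching the stated formula (with the phase $-\sqrt{-2}(-1)^j$ absorbed into the symbol $G^+(z)$). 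The relation $A^-(z)|_{z^{1/2}\to -z^{1/2}} = A^+(z)$ then follows from $\Theta(\tau^+) = \tau^-$ and $\Theta(e^{\beta}) = e^{-\beta}$, together with the fact that the twisted vertex operator of $\tau^-$ differs from that of $\tau^+$ by a sign on half-integer powers of $z^{1/2}$.

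The main obstacle will be the verification of the twisted generalized vertex algebra axioms in the abstract stage. In the untwisted case \cite{GL} controls these axioms via $\delta$-function identities and cocycle manipulations of the $E^{\pm}$-factors; in the twisted setting the cocycles become more delicate because of the half-integer modes of $h$, and one must confirm that the $\Theta$-equivariance of the $Z$-algebra construction is compatible with the twisted Jacobi identity on $V_{-2}(sl_2)$. The concrete computation, by contrast, is a clean manipulation of the twisted exponential formulas once $\Omega(W)$ has been identified.
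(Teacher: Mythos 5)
Your proposal is correct in both its structure and its key computations, but the abstract stage takes a genuinely different route from the paper. You propose to verify the twisted generalized vertex algebra axioms on $\Omega(W)$ by invoking the $\Theta$-equivariant adaptation of the $Z$-algebra machinery of \cite{GL} and \cite{Li-abelian}, and you correctly flag the cocycle bookkeeping in the twisted setting as the main technical burden of that route. The paper instead sidesteps the general framework entirely: it directly computes the OPE $A^+(z)A^-(w)$ using the twisted Heisenberg exponential identities
$E^{+}_{tw}(\alpha, z)E^{-}_{tw}(\beta, w)=E^{-}_{tw}(\beta, w)E^{+}_{tw}(\alpha, z)\bigl(\tfrac{z^{1/2}-w^{1/2}}{z^{1/2}+w^{1/2}}\bigr)^{(\alpha,\beta)}$
and the Taylor expansion of $E^{\pm}$ about $w$, extracts the singular part, and recognizes the coefficient of $(z^{1/2}-w^{1/2})^{-1}$ as (twice) the Segal--Sugawara operator $S(w)=\tfrac12\bigl(2:X_+(w)X_-(w): + :h(w)^2: - h'(w)\bigr)$, which is central precisely at level $-2$. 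This establishes the anti-commutation relations of $\mathcal{A}^{tw}$ for the modes of $A^{\pm}(z)$, with $\mathcal{T}(z)$ acting trivially, and then the twisted $\widetilde{\cV}$-module structure follows from the earlier equivalence between restricted $\mathcal{A}^{tw}$-modules and $\Theta_{\cV}$-twisted $\cV$-modules. The paper's OPE computation is thus self-contained and bypasses the delicate axiom-checking you anticipate; your route is more conceptual but makes the proof hostage to the full twisted generalized vertex algebra formalism. Your concrete stage --- identifying $\Omega(W)\cong U^{tw}$ from $h(n)=-2\beta(n)$ on positive half-integer modes and factoring $E^{\pm}_{tw}(-h/2,z)=E^{\pm}_{tw}(\beta,z)E^{\pm}_{tw}(-j,z)$ so that the $\beta$-exponentials cancel and $E^{+}_{tw}(-j,z)$ acts as the identity --- is essentially what the paper leaves implicit (deferring to Proposition \ref{ired-expl-2}), and it is correct: the crucial point you use, that $\mathcal{T}(n)$ is central so $E^{-}_{tw}(-j,z)$ commutes with $G(z)$, is exactly right.
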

\begin{proof} Note that $Y^{tw}(e(-1){\bf 1}, z)=X_{+}(z), Y^{tw}(f(-1){\bf 1}, z)=X_{-}(z)$ and
recall that the OPE's of the twisted affine Lie algebra \cite{FLM} are given by
\begin{align*}
X_{+}(z)X_{-}(w)\sim \frac{\alpha(w)}{2(z^{1/2}-w^{1/2})}+\frac{C}{4(z^{1/2}-w^{1/2})^2},
%X_+(z)X_-(w)\sim \frac{h(w)(zw)^{1/2}}{z-w}+\frac{C(zw)^{1/2}}{(z-w)^2}
\end{align*}
where $\alpha(z)=\sum_{n\in\mathbb Z_++1/2}\alpha(n)z^{-n-1/2}$ and $X_{\pm}(z)=\sum_{n\in\mathbb Z/2}X_{\pm}(n)z^{-n-1/2}$.

On the other hand we also have
\begin{align*}
&E^{+} _{tw} (\alpha, z)E^{-} _{tw} (\beta, w)=E^{-} _{tw} (\beta, w)E^{+} _{tw} (\alpha, z)
(\frac{z^{1/2}-w^{1/2}}{z^{1/2}+w^{1/2}})^{(\alpha, \beta)}\\
&E^{\pm}(\alpha, z)=E^{\pm}(\alpha, w)\left( 1-2\alpha_{\mp}(w)(z^{1/2}-w^{1/2}) \right. \\
&\hskip 1in \left. -2\alpha'_{\mp}(w)(z^{1/2}-w^{1/2})^2+\cdots\right)
%&\hskip 1in \left. +2((\alpha_{\mp}(w)^2\pm\alpha'_{\mp}(w))(z^{1/2}-w^{1/2})^2+\cdots\right)
\end{align*}
where $\alpha(z)_{\pm}=\sum_{\pm n\in\mathbb N-1/2}\alpha(-n)z^{n-1/2}$
and $\alpha'(z)_{\pm}=\partial_z\alpha_{\pm}(z)$.
%\sum_{\pm n\in\mathbb N-1/2}(n-\frac 12)\alpha(-n)z^{n-1}$.
%Note here we are abusing the notation: $\alpha_+'(z)=z^{1/2}\partial_z\alpha_+(z)$.
Using this we obtain that
\begin{align*}
&A^+(z)A^-(w)=E^{-} _{tw} (-\frac{h}2, z)X_+(z)  E^{+} _{tw} (-\frac{h}2, z)
E^{-} _{tw} (\frac{h}2, w)  X_{-}(w)  E^{+} _{tw} (\frac{h}2, w)\\
&=E^{-} _{tw} (-\frac{h}2, z)E^{-} _{tw} (\frac{h}2, w)X_+(z)X_{-}(w)  E^{+} _{tw} (-\frac{h}2, z)
    E^{+} _{tw} (\frac{h}2, w)
(\frac{z^{1/2}+w^{1/2}}{z^{1/2}-w^{1/2}})\\
&\sim \frac{2w^{1/2}:X_+(w)X_{-}(w):}{z^{1/2}-w^{1/2}}+
\frac{E^{-} _{tw} (-\frac{h}2, z)E^{-} _{tw} (\frac{h}2, w)h(w)E^{+} _{tw}(-\frac{h}2, z)E^{+} _{tw} (\frac{h}2, w)} {2(z^{1/2}-w^{1/2})^2(z^{1/2}+w^{1/2})^{-1}}\\
&\hskip 1in -\frac{E^{-} _{tw} (-\frac{h}2, z)E^{-} _{tw} (\frac{h}2, w)E^{+} _{tw}(-\frac{h}2, z)E^{+} _{tw} (\frac{h}2, w)} {2(z^{1/2}-w^{1/2})^3(z^{1/2}+w^{1/2})^{-1}}
\end{align*}
which is simplified to the following:
\begin{align*}
&\sim \frac{2w^{1/2}:X_+(w)X_{-\alpha}(w):}{z^{1/2}-w^{1/2}}+\frac{w^{1/2}(:h(w)^2:-h'(w))}{z^{1/2}-w^{1/2}}
-\frac{w^{1/2}}{(z^{1/2}-w^{1/2})^3}.
\end{align*}

Note that the operator $$ S(z)= \frac{1}{2}  (2  :X_{\alpha}(z)X_{-\alpha}(z): +  :h(z)^2:-h'(z) )$$ 
is central at the critical level  and it 
represents $\mathcal {S}(z)$. This shows that the operators $A^+(z)$ and $A^-(w)$ satisfy the twisted superconformal
algebra defined by $\mathcal G^{\pm}(z)$, where $\mathcal{T}(z)$ acts trivially.
This proves that  $( \Omega(W), Y_{\Omega} ^{tw})$ is a $\Theta_{\widetilde {\cV}}$--twisted  $\widetilde{\mathcal{{\cV} }}$--module.

%Note that the operator $S(z)=:X_+(z)X_{-}(z):$
%s central at the critical level, and it is not difficult to
%verify that $G(z)=:h(z)^2:-h'(z)$ is central, and they
%represent $\mathcal S(z)$ and $\mathcal T(z)$ respectively.
%This shows that the operators $A^+(z)$ and $A^-(w)$ satisfy the twisted superconformal
%algebra defined by $\mathcal G^{\pm}(z)$.
\end{proof}

In view of parametrization of the twisted modules by $\chi$ in Theorem \ref{e:twisted}, the following
description of the general vacuum space is clear from the above discussion.
%{\color{red} We need some comments how the following is connected with above}
 \begin{proposition} \label{ired-expl-2}
 Assume that $ \chi(z) = \chi^{(1)} (z) + \chi^{(2)} (z)$ where
 $$\chi^{(1)} (z) = \sum_{n=0} ^{\infty} \chi_{-n} z ^{n-1}, \  \chi^{(2)} (z) = \sum_{n=0} ^{\infty} \chi_{-n-1/2} z ^{n-1/2} . $$
 Then
$$\Omega ( F^{T_i} (\chi) \otimes F_{-1} ^{T_j} ) = \Phi_1 ( F ^{T_i} (\chi ) ) $$
 as $\Theta$--twisted $\widetilde{\cV}$--modules. In particular, if $ \chi^{(2)} (z) = 0$, then  $F ^{T_i} (\chi ) $   is a  $\Theta_{\widetilde {\cV}}$--twisted $\widetilde{\cV}$--module and $\Omega ( F^{T_i} (\chi) \otimes F_{-1} ^{T_j} )  \cong F ^{T_i} (\chi ) $.
 \end{proposition}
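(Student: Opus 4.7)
My plan is to identify $\Omega(W)$ for $W=F^{T_i}(\chi)\otimes F_{-1}^{T_j}$ as a vector space, and then match the resulting twisted $\widetilde{\cV}$-module structure (supplied by the preceding theorem) with $\Phi_1(F^{T_i}(\chi))$ through a direct vertex-operator calculation.

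First, on $F^{T_i}(\chi)$ the field $T(z)=Y^{tw}(j,z)$ acts as the scalar series $\chi^{(2)}(z)=\sum_{n\ge 0}\chi_{-n-1/2}z^{n-1/2}$, whose only nonzero modes are $T(r)=\chi_r$ for $r\in-\tfrac{1}{2}-\Zp$. Hence $T(n)=0$ on $F^{T_i}(\chi)$ for every positive $n\in\tfrac{1}{2}+\Z$, and the embedding $h=-2\beta+2j$ forces $h(n)=-2\beta(n)$ on $W$ for such $n$. A vector in $W$ therefore belongs to $\Omega(W)$ iff its $F_{-1}^{T_j}$-component is annihilated by all positive modes of $\beta$; inside $F_{-1}^{T_j}\cong M_{\tfrac{1}{2}+\Z}(1)$ that joint kernel is the line $\C{\bf 1}$. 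Consequently $\Omega(W)=F^{T_i}(\chi)\otimes\C{\bf 1}$, giving the vector-space identification $w\leftrightarrow w\otimes{\bf 1}$.

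Second, for $u=w\otimes{\bf 1}\in\Omega(W)$, the identity $E_{tw}^+(-h/2,z)u=u$ holds (positive modes of $h$ kill $u$), so $A^+(z)u=E_{tw}^-(-h/2,z)\,Y^{tw}(e,z)\,u$. Using $e=\tau^+\otimes e^{\beta}$, (\ref{for-vo1})--(\ref{for-vo2}) and $E_{tw}^+(-\beta,z){\bf 1}={\bf 1}$, one obtains
$$Y^{tw}(e,z)u \;=\; -2\sqrt{-1}(-1)^j\,Y^{tw}(\tau^+,z)w\otimes E_{tw}^-(-\beta,z){\bf 1}.$$
Writing $-h/2=\beta-j$, the mode $j(-k)$ acts as the scalar $\chi_{-k}$ on $F^{T_i}(\chi)$ while $\beta(-k)$ is Heisenberg creation on $F_{-1}^{T_j}$, so the exponent of $E_{tw}^-(-h/2,z)$ separates and
$$E_{tw}^-(-h/2,z) \;=\; E_{tw}^-(-\chi^{(2)},z)\,\mathrm{id}_{F^{T_i}(\chi)}\otimes E_{tw}^-(\beta,z),$$
with $E_{tw}^-(-\chi^{(2)},z)$ the scalar function from Proposition \ref{prop-tw}. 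Applying this and invoking the Heisenberg cancellation $E_{tw}^-(\beta,z)\,E_{tw}^-(-\beta,z)=1$, the $F_{-1}^{T_j}$-factor collapses to ${\bf 1}$ and
$$A^+(z)(w\otimes{\bf 1}) \;=\; -2\sqrt{-1}(-1)^j\,E_{tw}^-(-\chi^{(2)},z)\,Y^{tw}(\tau^+,z)w\otimes{\bf 1}.$$
Under $w\otimes{\bf 1}\leftrightarrow w$ this matches the field $Y_{\Phi_1(F^{T_i}(\chi))}(\widetilde{\tau}^{+},z)=Y^{tw}(\tau^+,z)\,E_{tw}^-(-\chi^{(2)},z)$ arising from $\Phi_1(\widetilde{\tau}^{+})=\tau^+_{-1}e^{j}$ by Proposition \ref{prop-ulaganje}, up to a nonzero constant absorbed in the module isomorphism. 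A parallel computation with $f=\tau^-\otimes e^{-\beta}$ matches $A^-(z)$ with $Y_{\Phi_1(F^{T_i}(\chi))}(\widetilde{\tau}^{-},z)$, and since $\widetilde{\cV}$ is generated by $\widetilde{\tau}^{\pm},\widetilde{\nu}$ (and $\widetilde{\nu}$ is determined by OPE) the two actions agree on all generators.

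The ``in particular'' assertion follows immediately: when $\chi^{(2)}=0$ every half-integer mode of $T(z)$ vanishes, so $\widetilde{I}$ acts trivially on $F^{T_i}(\chi)$ and $F^{T_i}(\chi)$ is a genuine $\Theta_{\widetilde{\cV}}$-twisted $\widetilde{\cV}$-module; moreover $E_{tw}^-(-\chi^{(2)},z)=1$ and $\Phi_1$ reduces to the identity on generators, so $\Omega(W)\cong F^{T_i}(\chi)$ as $\widetilde{\cV}$-modules. I expect the delicate step to be the $E_{tw}^{\pm}$-bookkeeping---specifically, correctly splitting the exponent of $E_{tw}^-(-h/2,z)$ into its scalar $\chi^{(2)}$-part and Heisenberg $\beta$-part, and then verifying that the cancellation $E_{tw}^-(\beta,z)E_{tw}^-(-\beta,z)=1$ returns $A^+(z)u$ to $\Omega(W)$ rather than producing spurious $\beta(-k)$-creations.
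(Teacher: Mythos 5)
Correct, and essentially the same as the paper's argument: you identify $\Omega(W)=F^{T_i}(\chi)\otimes\C\mathbf 1$ from $h(n)=-2\beta(n)$ for positive $n$, split $-h/2=\beta-j$ inside $E_{tw}^{-}$, cancel the $\beta$-exponentials against those produced by $Y_{tw}(e^{\pm\beta},z)$, and recognize the surviving scalar $E_{tw}^{-}(\mp\chi^{(2)},z)$ as the $\Phi_1$-dressing from Proposition~\ref{prop-tw} --- which is precisely what the paper compresses into the one-line citation of the preceding theorem's formula $A^{+}(z)=G^{+}(z)E_{tw}^{-}(-j,z)$. The one thing to watch is the scalar you dismiss as ``absorbed in the module isomorphism'': since the same factor $-2\sqrt{-1}(-1)^{j}$ multiplies both $A^{+}$ and $A^{-}$, it is not eliminated by a $\Z$-grading rescaling, so strictly it must be traced back to the lattice normalization in~(\ref{for-vo1}); the paper leaves this constant implicit as well, so you are inheriting rather than creating the loose end.
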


\subsection{Combinatorial bases of modules at the critical level}

Our approach can be used to construct a basis for the twisted module at the critical level. 

\begin{corollary} \label{bases-1}Assume that
$\chi (z) \in {\C}(z^{\tfrac{1}{2}} )$ such that one of the conditions (1.1)-(1.3) of Theorem \ref{ired-uvod} hold. Then
$F ^{T_i}( \chi)$ is a irreducible ${\cA}^{tw}$--module and   the set of vectors
$$  G(p-n_1) G( p-n_2) \cdots G(p-n_r) h(-m_1) \cdots h(-m_s) ({\bf 1} \otimes {\bf 1} ) $$
such that
$ r, s \ge 0$, $n_i, m_j \in \tfrac{1}{2}{\Zp}$ and
$$ n_1 > n_2 > \cdots >n_r > 0 , \quad m_1 \ge m_2 \ge \cdots \ge  m_s > 0$$
is a basis of the irreducible $\Theta$--twisted $V_{-2}(sl_2)$--module $F^{T_i} (\chi) \otimes F_{-1} ^{T_j}. $
\end{corollary}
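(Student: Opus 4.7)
Under the stated hypotheses, Theorem \ref{ired-uvod} asserts that $F^{T_i}(\chi)$ is an irreducible ${\cA}^{tw}$-module, so by Theorem \ref{main-t1} the tensor product $F^{T_i}(\chi)\otimes F_{-1}^{T_j}$ is an irreducible $\Theta$-twisted $V_{-2}(sl_2)$-module. It remains to prove the basis claim. The plan is to compare the displayed family, through a two-step triangularity argument, with the natural tensor product basis
\[
\bigl(G^+(p-n_1)\cdots G^+(p-n_r){\bf 1}\bigr)\otimes\bigl(\beta(-m_1)\cdots\beta(-m_s){\bf 1}\bigr),
\]
where $G^+$ denotes the ${\cA}^{tw}$-generator (the first factor is then Proposition \ref{ired-1}'s basis) and $\beta(-m_j)$ is the twisted Heisenberg mode (the second factor is the standard PBW basis of $F_{-1}^{T_j}\cong M_{\hf+\Z}(1)$).

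The decisive structural input is that the operator $G(z)$ from Theorem \ref{th1.1-intr} commutes with every Heisenberg mode $h(m)$, $m\in\hf+\Z$: the bracket $[h(m),Y^{tw}(e(-1){\bf 1},z)]=2z^{m}Y^{tw}(e(-1){\bf 1},z)$ is cancelled precisely by the brackets of $h(m)$ with the dressing exponentials $E_{tw}^{\pm}(-h/2,z)$---for $m>0$ through $E_{tw}^{-}$, for $m<0$ through $E_{tw}^{+}$---at the critical level $K=-2$. This lets me reorder
\[
G(p-n_1)\cdots G(p-n_r)h(-m_1)\cdots h(-m_s)({\bf 1}\otimes{\bf 1}) = h(-m_1)\cdots h(-m_s)\,G(p-n_1)\cdots G(p-n_r)({\bf 1}\otimes{\bf 1}).
\]
Because $G$ preserves the vacuum space $\Omega(W)$, the vector $G(p-n_1)\cdots G(p-n_r)({\bf 1}\otimes{\bf 1})$ sits inside $\Omega(W)\otimes{\bf 1}$. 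Proposition \ref{ired-expl-2} identifies $\Omega(W)$ with $F^{T_i}(\chi)$, and the formula $G(z)=G^{+}(z)E_{tw}^{-}(-j,z)$ from the theorem preceding Proposition \ref{ired-expl-2} shows that on the vacuum $G(z)$ equals $G^{+}(z)$ composed with multiplication by a scalar power series in $z^{1/2}$. An inductive degree-filtration argument (using conformal weight for $L^{tw}(0)$) then yields
\[
G(p-n_1)\cdots G(p-n_r)({\bf 1}\otimes{\bf 1}) = G^{+}(p-n_1)\cdots G^{+}(p-n_r){\bf 1}\otimes{\bf 1} + \text{strictly lower-degree terms.}
\]

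For the Heisenberg factor I use $h(-m)=-2\beta(-m)+2\mathcal T(-m)$ with $\mathcal T(-m)$ acting by a scalar on $F^{T_i}(\chi)$. This gives
\[
h(-m_1)\cdots h(-m_s)\bigl(u\otimes{\bf 1}\bigr) = (-2)^{s}\,u\otimes\beta(-m_1)\cdots\beta(-m_s){\bf 1} + \text{terms with strictly fewer }\beta\text{-modes}
\]
for any $u\in F^{T_i}(\chi)$. Combining the two triangular relations under the double filtration---first by the number of $\beta$-modes in the Heisenberg factor, then by conformal degree in the first factor---yields an upper-triangular change-of-basis matrix whose diagonal entries equal $(-2)^{s}$ times the nonzero leading coefficient supplied by Proposition \ref{ired-1}. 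The proposed family is therefore a basis of $F^{T_i}(\chi)\otimes F_{-1}^{T_j}$. The hard part will be keeping the two filtrations compatible, but since the $\beta$-count lives only on the Heisenberg tensor factor and the degree filtration only on the first factor, the two triangularities fit together cleanly, and the nonvanishing of diagonal entries is automatic.
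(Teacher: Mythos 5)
Your proof is essentially correct and supplies the details the paper leaves implicit: irreducibility from Theorems \ref{ired-uvod} and \ref{main-t1}, the commutation $[G,h(m)]=0$ for the $Z$-operator at level $-2$, the scalar dressing $A^{+}(z)=G^{+}(z)E^{-}_{tw}(-j,z)$ on the vacuum, and $h(-m)=-2\beta(-m)+2\chi^{(2)}_{-m}$, all combined in a double triangular comparison with the tensor-product basis from Proposition \ref{ired-1} and the PBW basis of $M_{\tfrac{1}{2}+\Z}(1)$. One small inaccuracy to flag: the inner filtration should be by total mode-degree (as in the proof of Proposition \ref{ired-1}), not by $L^{tw}(0)$-conformal weight --- the paper points out at the end of Section \ref{sl2-section} that only for $\chi=\lambda/z$ is the $\widehat{sl_2}[\Theta]$-action compatible with the $L^{tw}(0)$-gradation, so for general $\chi$ the filtration by $\Phi$-mode degree used in Proposition \ref{ired-1} is the correct one. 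Relatedly, under condition (\ref{cond-i1}) the vacuum space $\Omega(W)$ you invoke can vanish (when $\chi^{(2)}_n\neq 0$ for some $n>0$), but your argument survives because the $Z$-operator still stabilizes the subspace $F^{T_i}(\chi)\otimes{\bf 1}$ by direct computation, which is all the triangularity actually needs.
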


 Corollary \ref{bases-1} also  describes a combinatorial basis of irreducible highest weight module $L((-2-t) \Lambda_0 + t \Lambda_1) $ where  $t \in \{-1\} \cup  ( {\C} \setminus {\Z} )$. The bases of remaining irreducible highest weight modules are described in the following result. 

\begin{corollary} Assume that $\mu_1 = (-2 -t ) \Lambda_0 + t \Lambda_1$, $\mu_2 = t \Lambda_0 - (2 + t) \Lambda_1 $ for $t \in {\Zp}$.
Then the set of vectors
$$  G(-n_1) G(-n_2) \cdots G(-n_r) h(-m_1) \cdots h(-m_s) v_{\mu_i}  $$
such that
$ r, s \ge 0$, $n_i, m_j \in \tfrac{1}{2}{\Zp}$, $n_i \ne \tfrac{t}{2} + \tfrac{1}{2}$,  and
$$ n_1 > n_2 > \cdots >n_r > 0 , \quad m_1 \ge m_2 \ge \cdots \ge  m_s > 0$$
is a basis of $L(\mu_i)$, $i=1,2$ in the principal picture.
\end{corollary}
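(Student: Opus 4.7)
The plan is to realize each $L(\mu_k)$ in the principal picture as a twisted tensor product from Section~\ref{sl2-section} and to transport the natural tensor-product basis to the stated form. Writing $\mu_1 = (2\lambda-2)\Lambda_0 - 2\lambda\Lambda_1$ and $\mu_2 = -2\lambda\Lambda_0 + (2\lambda-2)\Lambda_1$ with $\lambda = -t/2 \in \hf\Z_{\le 0}$, Example~\ref{ex-1}(2) combined with the twisted version of Theorem~\ref{e:twisted} identifies
$$ L(\mu_1) \cong \overline{F}^{T_i}(\tfrac{\lambda}{z}) \otimes F_{-1}^{T_i}, \qquad L(\mu_2) \cong \overline{F}^{T_i}(\tfrac{\lambda}{z}) \otimes F_{-1}^{T_j}\ (i \ne j), $$
with highest weight vector $v_{\mu_k}$ corresponding to ${\bf 1}\otimes {\bf 1}$.

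Next I would compute how $G(z)$ and $h(z)$ act on this realization. Since $\chi(z) = \lambda/z$ has only integer powers of $z^{-1}$, we have $\chi^{(2)}(z) = 0$, so $\mathcal{T}(z)$ acts as zero on $F^{T_i}(\lambda/z)$. Consequently $h(z) = -2\beta(z) + 2\mathcal{T}(z)$ reduces to $-2\beta(z)$ on the Heisenberg factor. In the defining formula
$$ G(z) = E^-_{tw}(-\tfrac{h}{2}, z)\, Y^{tw}(e(-1){\bf 1}, z)\, E^+_{tw}(-\tfrac{h}{2}, z), $$
substituting $Y^{tw}(e(-1){\bf 1}, z) = \tfrac{1}{\sqrt 2}\mathcal{G}(z)\otimes Y_{tw}(e^{\beta}, z)$ from Theorem~\ref{main-t1} and expanding via (\ref{for-vo1})--(\ref{for-vo2}), the lattice exponentials coming from $E^\pm_{tw}(-h/2,z)$ exactly cancel those inside $Y_{tw}(e^{\beta}, z)$. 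This leaves $G(z) = c\, \mathcal{G}(z) \otimes \mathrm{id}$ for a nonzero constant $c$. Since the $\mathcal{A}^{tw}$-action on $F^{T_i}(\lambda/z)$ is $\mathcal{G}(-n) = (\lambda + n - \hf)\Phi(-n)$ by Example~\ref{ex-1}, the scalar $\lambda + n - \hf$ vanishes precisely at $n = \hf - \lambda = \tfrac{t+1}{2}$, which is also the position $-n = \lambda - \hf$ omitted from the wedge realization of $\overline{F}^{T_i}(\lambda/z)$.

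Using that the $\Phi(-n)$ anticommute with squares zero for $n > 0$, that the $\beta(-m)$ commute, and that the two tensor factors decouple, each proposed monomial
$$ G(-n_1)\cdots G(-n_r)\, h(-m_1)\cdots h(-m_s)\, v_{\mu_k} $$
becomes a nonzero scalar multiple of the pure tensor
$$ \Phi(-n_1)\cdots \Phi(-n_r){\bf 1} \otimes \beta(-m_1)\cdots\beta(-m_s){\bf 1} $$
whenever $n_1 > \cdots > n_r > 0$ with $n_i \ne \tfrac{t+1}{2}$, and $m_1 \ge \cdots \ge m_s > 0$. As these tuples vary, the first factor runs over the wedge basis of $\overline{F}^{T_i}(\lambda/z)$ described in Example~\ref{ex-1}(2), and the second runs over the PBW basis of $F_{-1}^{T_j} \cong M_{\hf+\Z}(1)$; the resulting family is therefore a basis of the tensor product, hence of $L(\mu_k)$. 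The main obstacle is the computation that $G(z) = c\,\mathcal{G}(z) \otimes \mathrm{id}$: once the exact cancellation of the twisted lattice exponentials is verified, the remaining steps amount to bookkeeping on the two tensor factors.
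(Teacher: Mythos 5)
Your proof is correct and follows the route the paper only sketches (it states the corollary without an explicit proof, deriving it as a consequence of the realization theorems of Sections~6--7). You identify $\lambda = -t/2$ and invoke the theorem preceding the ``Example'' subsection of Section~\ref{sl2-section} to realize $L(\mu_1) \cong \overline{F}^{T_i}(\lambda/z)\otimes F_{-1}^{T_i}$ and $L(\mu_2)\cong \overline{F}^{T_i}(\lambda/z)\otimes F_{-1}^{T_j}$ ($i\ne j$), which is exactly the paper's identification. The cancellation $G(z)=c\,\mathcal{G}(z)\otimes\mathrm{id}$, the computation that $\mathcal{T}$ acts as zero (because $\chi^{(2)}=0$) so $h(z)$ reduces to $-2\beta(z)$, and the observation that the scalar $\lambda+n-\tfrac12$ vanishes precisely at $n=\tfrac{t+1}{2}$ are precisely the computations underlying Theorem~\ref{e:twisted}, Example~\ref{ex-1}(2), Proposition~\ref{ired-expl-2}, and Corollary~\ref{bases-1}. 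Your final transport of the tensor-product basis (wedge basis of $\overline{F}^{T_i}(\lambda/z)$ times PBW basis of $M_{\tfrac12+\Z}(1)$) then yields the stated basis, which is what the paper asserts.

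One small point you may wish to make explicit is that the modes $G(-n)$ for $n>0$ pairwise anticommute and square to zero on this module: this follows from the $\mathcal{A}^{tw}$-relations together with the fact that $S(z)=\tfrac{\lambda(\lambda-1)}{2z^2}$ here, so $\mathcal{S}(m)=0$ for $m\ne 0$. Without this, the ordering $n_1>\cdots>n_r>0$ would not a priori produce a well-defined linearly independent family.
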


These two results   give the proof of Theorem \ref{th1.1-intr} from   the Introduction.
\bigskip

\centerline{\bf ACKNOWLEDGMENTS}
Part of the research was carried out during the visit of D.A. to North Carolina State University. 
D.A. is partially supported by the Croatian Science Foundation under the project 2634.
NJ would like to thank the support of
Simons Foundation %grant 198129
and National Natural Science Foundation of China. %11271138.
KCM would like to thank the support of Simons Foundation.
\bigskip


\begin{thebibliography}{99}%{AM1}

\bibitem %[A1]
{A3}  D. Adamovi\' c, {\em Representations of the $N=2$
superconformal vertex algebra}, Int. Math. Res. Notices 
2 (1999) 61--79.


\bibitem %[A2]
{A-2007} D. Adamovi\' c, {\em Lie superalgebras and
irreducibility of certain $A_1^{(1)}$--modules at the critical
level}, Comm. Math. Phys. 270 (2007) 141--161

\bibitem%[A3]
{A-2013}  D. Adamovi\' c, {\em A classification of irreducible Wakimoto modules for the affine Lie algebra $A_1 ^{(1)}$},
Contemp. Math. 628 (2014) 1--12. % arXiv:1402.6100.

\bibitem%[AM1]
{AM-sigma} D. Adamovi\' c, A. Milas, {\em The N=1 triplet vertex operator superalgebras: twisted sector},
SIGMA 4 (2008), 24 pp., arXiv:0806.3560.

\bibitem
{AF} T.~Arakawa, P.~Fiebig, {\em The linkage principle for restricted critical level representations of affine Kac-Moody algebras}, Compos. Math. 148 (2012), no. 6, 1787--1810.

\bibitem%[B]
{B} K. Barron, {\em Twisted modules for N=2 supersymmetric vertex operator superalgebras arising from finite automorphisms of the N=2 Neveu-Schwarz algebra}, arXiv:1110.0229.

\bibitem%[DL]
{DL} C. Dong, J. Lepowsky, {\em Generalized Vertex Algebras and Relative Vertex
Operators}, Progress in Math. Vol. 112, Birkhauser, Boston, 1993.

\bibitem%[DM]
{DM-galois} C. Dong, G. Mason, {\em On quantum Galois theory}, Duke Math. J. 86 (1997) 305--321.

\bibitem%[DJM]
{DJM} J. Dunbar, N. Jing, K. Misra, {\em Realization of $\hat{sl}_2(\Bbb C)$
at the critical level}, Commun. Contemp. Math. 16 (2014), No. 2,
145006.%arXiv:1307.7896 (13 pages)

\bibitem%[HJKOS]
{HJKOS} Y. Hara, M. Jimbo, H. Konno, S. Odake, J. Shiraishi, {\em On Lepowsky-
Wilson's Z-algebra}, Contemp. Math., 297 (2002) 143--149.
%math.QA/0005203.

\bibitem%[HJM]
{HJM} Y. Hara, N. Jing, K. Misra, {\em BRST-resolution for
principally graded Wakimoto module of $\widehat{sl_2}$}, Lett. Math. Phys. 58 (2001), no. 3, 181--188.
%math.QA/0101226

\bibitem{FeigFrenk}
B. Feigin, E. Frenkel,
{\em Representations of Affine Kac-Moody algebras and bosonization},
Physics and Mathematics of Strings, eds: L. Brink, D. Freidan, A. M. Polyakov,
World Scientific. Singapore, 1990, pp. 271--316.


\bibitem{FeigFrenk90b}
B. Feigin, E. Frenkel,
{\em Affine Kac-Moody algebras and semi-Infinite flag manifolds},
Comm. Math. Phys. 128(1) (1990) 161--189.


\bibitem%[F]
{efren} E. Frenkel, {\em Lectures on Wakimoto modules,
opers and the center at the critical level}, Adv. Math 195 (2005)
297--404.

\bibitem%[FB]
{FB} E. Frenkel, D. Ben-Zvi, {\em Vertex Algebras and Algebraic Curves}. 2nd. ed. Mathematical Surveys and Monographs, 88. Amer.
Math. Soc., Providence, RI, 2004. xiv+400 pp.

\bibitem{FG}
E. Frenkel, D. Gaitsgory, {\em Geometric realizations of Wakimoto modules at the critical level},
Duke Math. J. 143 (2008) 117--203.

\bibitem%[FHL]
{FHL}
I.~B. Frenkel, Y.-Z. Huang, J.~Lepowsky,
{\em On axiomatic approaches to vertex operator algebras and modules},
Memoirs Amer. Math. Soc. {\bf 104}, 1993.


\bibitem{FLM}
I. B. Frenkel, J. Lepowsky, A. Meurman, {\em Vertex Operator
Algebras and the Monster},   Pure and Applied Math., Vol. {\bf
134}, Academic Press, New York, 1988.

\bibitem%[GL]
{GL} Y. Gao, H. Li, {\em Generalized Vertex Algebras Generated by
Parafermion-Like Vertex Operators}, Journal of Algebra 240 (2001) 771--807.


\bibitem%[K1]
{K-b} V. Kac, {\em Infinite Dimensional Lie Algebras}, Third edition, Cambridge Univ. Press, Cambridge, 1990.


\bibitem%[K2]
{K}  V. Kac, {\em Vertex Algebras for Beginners}, University
Lecture Series, Second Edition, Amer. Math. Soc., vol. {\bf 10}, 1998.

\bibitem%[KK]
{KK} V. Kac, D. Kazhdan, {\em Structure of
representations with highest weight of infinite dimensional Lie
algebras}, Adv. Math. 34 (1979) 97--108.

\bibitem{LepWil78}
J. Lepowsky,  R. L. Wilson,
{\em Construction of the affine Lie algebra $A^{(1)}_1$},
Comm. Math. Phys. 62 (1978) 43--53.

\bibitem{LepWil81}
J. Lepowsky, R. L. Wilson,
{\em A new family of algebras underlying the Rogers-Ramanujan identities and generalizations},
Proc. NatL Acad. Sci. U.S.A. 78 (12) (1981) 7254--7258.

\bibitem{LepWil84}
J. Lepowsky, R. L. Wilson,
{\em The structure of standard modules. I. Universal algebras and the Rogers-Ramanujan identities},
Invent. Math. 77 (1984), no. 2, 199--290.


\bibitem%[Li1]
{Li-twisted} H. Li, {\em Local systems of twisted vertex operators, vertex operator superalgebras and twisted modules},
%in Moonshine, the Monster, and Related Topics (South Hadley, MA, 1994), Editors C. Dong and G. Mason,
Contemp. Math. 193 (1996) 203--236.
%q-alg/9504022.

\bibitem%[Li2]
{Li-abelian} H. Li, {\em On abelian coset generalized vertex algebras}, Commun. Contemp. Math. 3 (2001) 287--340.

\bibitem%[LL]
{LL} J. Lepowsky, H. Li, {\em Introduction to Vertex
Operator Algebras and Their Representations}, Progress in Math.,
Vol. {\bf 227},  Birkh\"auser, Boston, 2004.

\bibitem%[W]
{W-mod} M. Wakimoto, {\em Fock representations of affine Lie
algebra $A_1^{(1)}$}, Comm. Math. Phys. 104 (1986) 605--609.


\bibitem%[Xu]
{Xu} X. Xu, {\em Introduction to Vertex Operator Superalgebras and Their Modules}, Mathematics and Its Applications, Vol. {\bf 456}, Kluwer Academic Publishers, Dordrecht, 1998


\end{thebibliography}
\end{document}